\documentclass[a4paper,reqno,final]{amsart}

\usepackage{amsmath,amssymb,amsthm}
\usepackage[foot]{amsaddr}
\usepackage{graphicx}
\usepackage{cite}
\usepackage{xcolor}
\usepackage{mathtools}
\usepackage{enumerate}
\usepackage{nicefrac}
\usepackage{bm}
\usepackage{algpseudocode}
\usepackage{booktabs}

\title[Galerkin Time Stepping Methods for Nonlinear IVP]{Continuous and Discontinuous Galerkin Time Stepping Methods for Nonlinear Initial Value Problems with Application to Finite Time Blow-Up}
\author{B\"arbel Holm}
\address{Department for Computational Science and Technology,
School of Computer Science and Communication,
KTH Royal Institute of Technology,
SE-100 44 Stockholm, Sweden}
\email{barbel@kth.se}

\author{Thomas P.~Wihler}
\address{Mathematisches Institut, Universit\"at Bern, Sidlerstr.~5, CH-3012 Bern, Switzerland}
\email{wihler@math.unibe.ch}

\thanks{The authors acknowledge the support of the Swiss National Science Foundation (SNF), Grant No.~200021-162990.}

\def\F{\mathcal F}
\def\G{\mathcal G}
\def\H{\mathsf T^{\dG}}
\def\HcG{\mathsf T^{\cG}}

\def\M{\mathcal M}

\def\dG{\mathsf{dG}}
\def\cG{\mathsf{cG}}

\def\Up{U^+}
\def\Um{U^-}

 % ||| |||
 % ||| |||
 % ||| |||
\newcommand{\jump}[1]{\lbrack\!\lbrack#1\rbrack\!\rbrack} %[[ ]]
\newcommand{\norm}[1]{\left\lVert#1\right\rVert} % || ||
 % | |
\newcommand{\dif}{\,\text{d}}
\renewcommand{\L}{\mathsf{L}}
\newcommand{\NN}[1]{\left\|#1\right\|}

\renewcommand{\phi}{\varphi}
\newcommand{\dd}{\mathsf{d}}
\renewcommand{\P}{\Upsilon^{r_m}_{m}}
\newcommand{\cF}{c_{\F}}

\newtheorem{algorithm}{Algorithm}

\newtheorem{Remark}{Remark}
\newenvironment{remark}{\begin{Remark}\rm}{\end{Remark}}
\newtheorem{theorem}{Theorem}
\newtheorem{proposition}{Proposition}

\newtheorem{lemma}{Lemma}

% eq. numbering with section
\numberwithin{equation}{section}

\begin{document}

\begin{abstract}
We consider continuous and discontinuous Galerkin time stepping methods of arbitrary order as applied to nonlinear initial value problems in real Hilbert spaces. Our only assumption is that the nonlinearities are continuous; in particular, we include the case of unbounded nonlinear operators. Specifically, we develop new techniques to prove general Peano-type existence results for discrete solutions. In particular, our results show that the existence of solutions is independent of the local approximation order, and only requires the local time steps to be sufficiently small (independent of the polynomial degree). The uniqueness of (local) solutions is addressed as well. In addition, our theory is applied to finite time blow-up problems with nonlinearities of algebraic growth. For such problems we develop a time step selection algorithm for the purpose of numerically computing the blow-up time, and provide a convergence result.
\end{abstract}

\keywords{Initial value problems in Hilbert spaces, Galerkin time stepping schemes, high-order methods, blow-up singularities, existence and uniqueness of Galerkin solutions.}

\subjclass[2010]{65J08, 65L05, 65L60}

\maketitle

%%%%%%%%%%%%%%%%%%%%%%%%%%%%%%%%%%%%%%%%%%%%%%%%%%%%%%%%%%%%%%%%%%%%%%
%%%%%%%%%%%%%%%%%%%%%%%%%%%%%%%%%%%%%%%%%%%%%%%%%%%%%%%%%%%%%%%%%%%%%%
\section{Introduction}
%%%%%%%%%%%%%%%%%%%%%%%%%%%%%%%%%%%%%%%%%%%%%%%%%%%%%%%%%%%%%%%%%%%%%%
%%%%%%%%%%%%%%%%%%%%%%%%%%%%%%%%%%%%%%%%%%%%%%%%%%%%%%%%%%%%%%%%%%%%%%

In this paper we focus on continuous Galerkin (cG) as well as on discontinuous Galerkin (dG) time stepping discretizations (of any order) as applied to abstract initial value problems of the form
\begin{align}
  u'(t) &= \F(t,u(t)), \quad t\in(0,T),\qquad\quad u(0) = u_0.\label{eq:1}
\end{align}
Here, $u:\,(0,T)\to H$, for some~$T>0$, is an unknown solution, with values in a real Hilbert space~$H$ (with inner product denoted by~$(\cdot,\cdot)_H$ and induced norm~$\|\cdot\|_H$).  The initial value $u_0 \in H$ prescribes the solution~$u$ at the start, $t=0$, and~$\F:\,[0,T]\times H\to H$ is a possibly nonlinear, {\em continuous} operator. We emphasize that we include, for instance, the case of~$\F$ being (continuous and nonlinear and) {\em unbounded} in the sense that
\begin{equation}\label{eq:unbounded}
\frac{\|\F(t,x)\|_H}{\|x\|_H}\to\infty\text{ as }\|x\|_H\to\infty,\qquad 0\le t\le T.
\end{equation}
In the sequel, we will usually omit to explicitly write the dependence on the first argument~$t$.

%We will look at different types of nonlinearities: 
%
%
% In addition, we will especially focus on nonlinearities which satisfy the growth condition
%\begin{equation}
%  \NN{\F(u)}_{H} \le c \NN{u}_H^\beta\qquad\forall u\in H,
%\label{condF}
%\end{equation}
%for some constants~$\beta\ge 1, c>0$. In the latter case, we notice that the problem~\eqref{eq:1}--\eqref{eq:2} might neither be monotone nor coercive. 

For~$H=\mathbb{R}^N$ and continuous nonlinearities~$\F$, the well-known Peano Theorem (see, e.g., \cite{Teschl}) guarantees the existence of $C^1$-solutions~$u$ of~\eqref{eq:1} within some limited time range, $t\in (0,T_\infty)$, for some~$T_\infty>0$. Generalizations to problems in Banach spaces are available as well; see, e.g., \cite{FrLe93}. Notice that the existence interval for  solutions may be arbitrarily small even for smooth~$\F$: For instance, solutions of~\eqref{eq:1} may become unbounded in finite time, i.e.,
\[
\|u(t)\|_H<\infty\text{ for }0<t<T_\infty,\qquad \lim_{t\nearrow T_\infty}\|u(t)\|_H = \infty.
\]
This effect is commonly termed (finite-time) \emph{blow-up}.

\subsection*{Galerkin Time Stepping} Galerkin-type time stepping methods for initial-value problems are based on weak formulations. For both the cG and the dG time stepping schemes, the test spaces consist of polynomials that are discontinuous at the time nodes. In this way, the discrete Galerkin formulations decouple into local problems on each time step, and the discretizations can therefore be understood as implicit one-step schemes. Galerkin time stepping methods have been analyzed for ordinary differential equations (ODEs), e.g.,
in~\cite{rannacher,DeDu86,DelfourHagerTrochu81,Estep95,EstepFrench94,johnson}. 

A key feature of Galerkin time stepping methods is their great flexibility with respect to the size of the time steps and the local approximation orders, thereby naturally leading to an $hp$-version Galerkin framework. The $hp$-versions of the cG and dG time stepping schemes were introduced and analyzed in the works~\cite{SchoetzauSchwabDGODE,SchoetzauSchwab00,ScWi10,Wihler05}. In particular, in the articles~\cite{SchoetzauSchwabDGODE,Wihler05}, which focus on ordinary initial value problems with uniform Lipschitz nonlinearities, the use of the contraction mapping theorem made it possible to prove existence and uniqueness results for discrete Galerkin solutions, which are independent of the local approximation orders. We emphasize that the $hp$-approach is well-known for its ability to approximate smooth solutions with possible local singularities at high algebraic or even exponential rates of convergence; see, e.g., \cite{BrunnerSchoetzau06,SchoetzauSchwab00,SchoetzauSchwab01,Gerdes} for the numerical approximation of problems with start-up singularities. 

\subsection*{Results} The goal of the current paper is to extend the existence results on $hp$-type Galerkin time stepping schemes for initial value problems with Lipschitz-type nonlinearities in~\cite{SchoetzauSchwabDGODE,Wihler05} to problems with nonlinearities which are merely continuous. We emphasize that this generalization is substantial; indeed, it covers, for example, the case of {\em unbounded} nonlinearities as in~\eqref{eq:unbounded}. We will develop a new technique which is based on writing the weak Galerkin formulations in strong form along the lines of~\cite{AkMaNo09,ScWi10}. Subsequently, suitable fixed-point forms will be derived. In the context of the cG method, this is accomplished within an integral equation framework. For the dG scheme, matters are more sophisticated, and a careful investigation of the discrete time derivative operator, which involves a lifting operator from~\cite{ScWi10}, is required on the local polynomial approximation space; this operator turns out to be an isomorphism on the underlying polynomial spaces (with a continuity constant of the inverse operator that is independent of the local polynomial degrees) and allows to transform the strong dG form into a fixed point equation. For both the cG and the dG schemes the application of Brower's fixed point theorem yields the existence of discrete solutions; see Theorem~\ref{thm:cGdG}. In particular, as in the case of Lipschitz continuous nonlinearities~\cite{AkMaNo09,ScWi10}, the existence results {\em do not depend on the local polynomial degrees}, and only require the local time steps to be sufficiently small. In this sense, our theory constitutes a \emph{discrete version of Peano's Theorem}. Furthermore, employing a contraction argument along the lines of the approach presented in~\cite{BandleBrunner:94}, we show that the local Galerkin formulations are uniquely solvable (within a certain range); cf.~Theorem~\ref{thm:uniqueness}.

In addition, we apply our general theory to initial value problems with nonlinearities of algebraic growth, i.e., $\F(t,u)\sim\alpha\|u\|_H^\beta$, with~$\alpha>0$, $\beta>1$, and for a given range of~$t$; in this case, the initial value problem~\eqref{eq:1} features a solution that blows up in a finite time~$T_\infty$. We will show that a careful selection of locally varying time steps in the cG and dG time stepping schemes results in discrete solutions that blow up as well; in this context, we mention the paper~\cite{StuartFloater:90} which illustrates the importance of variable step size selection. More precisely, following some ideas from~\cite{nakagawa:75}, we derive an analysis which allows to choose the local time steps \emph{a posteriori} as the time marching process is moving forward. We develop a time step selection algorithm which guarantees the existence and uniqueness of local solutions, and provides a numerical approximation of the exact blow-up time. Moreover, we prove a convergence result which shows that the blow-up time can be approximately arbitrarily well if the time steps are scaled sufficiently small. 

The concepts and technical tools developed in our current work constitute an important stepping stone with regard to the numerical treatment of finite time blow-up problems in the context of nonlinear parabolic partial differential equations.

\subsection*{Outline} Our article is organized as follows: Section~\ref{disc} presents the cG and dG time stepping schemes. Furthermore, Section~\ref{existdisc} centres on the development of existence proofs for discrete solutions. The question of uniqueness is addressed in Section~\ref{sc:uniqueness}. Moreover, the application of our results to algebraically growing nonlinearities causing finite time blow-ups will be worked out in Section~\ref{sc:appl}. Finally, the article closes with a few concluding remarks in Section~\ref{conclusions}.

\subsection*{Notation} Throughout the paper, Bochner spaces will be used: For an interval $I=(a,b)$ and a real Hilbert space~$H$ as before, the space $C^0(\overline{I};H)$ consists of all functions $u:\overline I\to H$ that are continuous on~$\overline{I}$ with values in $H$. Moreover, introducing, for~$1\le p\le\infty$, the norm
\[
\|u\|_{L^p(I;H)}=\begin{cases}
\displaystyle\left(\int_I\|u(t)\|^p_H\dif t\right)^{\nicefrac{1}{p}},&1\le p<\infty,\\[2ex]
\text{ess sup}_{t\in I}\|u(t)\|_H,&p=\infty,
\end{cases}
\]
we write $L^p(I;H)$ to signify the space of measurable
functions $u:I\to H$ so that
the corresponding norm is bounded. We notice
that~$L^2(I;H)$ is a Hilbert space with inner product and induced norm
\[
(u,v)_{L^2(I;H)}=\int_I(u(t),v(t))_H\dif t,\qquad\text{and}\qquad \|u\|_{L^2(I;H)}=
\left(\int_I\|u(t)\|^2_H\dif t\right)^{\nicefrac{1}{2}},
\]
respectively.

%%%%%%%%%%%%%%%%%%%%%%%%%%%%%%%%%%%%%%%%%%%%%%%%%%%%%%%%%%%%%%%%%%%%%%
%%%%%%%%%%%%%%%%%%%%%%%%%%%%%%%%%%%%%%%%%%%%%%%%%%%%%%%%%%%%%%%%%%%%%%
\section{Galerkin Time Discretizations}\label{disc}
%%%%%%%%%%%%%%%%%%%%%%%%%%%%%%%%%%%%%%%%%%%%%%%%%%%%%%%%%%%%%%%%%%%%%%
%%%%%%%%%%%%%%%%%%%%%%%%%%%%%%%%%%%%%%%%%%%%%%%%%%%%%%%%%%%%%%%%%%%%%%

In this section we present the $hp$-cG and $hp$-dG time stepping methods as applied to~\eqref{eq:1}.

\subsection{$hp$-cG Time Stepping}
On an interval~$I=[0,T]$, $T>0$, consider time nodes $0 = t_0 < t_1 < \cdots < t_{M-1} < t_M = T$ which introduce a time partition $\M=\{I_m\}_{m=1}^M$ of~$I$ into~$M$ open time intervals~$I_m=(t_{m-1},t_m)$, $m=1,\ldots,M$. The (possibly varying) length $k_m = t_m - t_{m-1}$ of a time interval is called the $m^\text{th}$ time step. Furthermore, to each interval we associate a polynomial degree~$r_m\ge 0$ which takes the role of a local approximation order. Moreover, given a (real) Hilbert space~$X\subset H$, an integer~$r\in\mathbb{N}_0$, and an interval~$J\subset\mathbb{R}$, the set
\[
\mathcal{P}^{r}(J;X)=\left\{p\in C^0(\bar J;X):\,p(t)=\sum_{i=0}^rx_it^i,\, x_i\in X\right\}
\]
signifies the space of all polynomials of degree at most~$r$ on~$J$ with values in~$X$. 

In practical computations, the Hilbert space~$H$, on which~\eqref{eq:1} is based, will typically be replaced by a finite-dimensional subspace~$H_m\subset H$, $\dim(H_m)<\infty$, on each interval~$I_m$, $1\le m\le M$. The $H$-orthogonal projection from~$H$ to~$H_m$ is defined by
\begin{equation}\label{eq:Riesz}
\pi_m:\, H\to H_m,\qquad (x-\pi_mx,y)_H=0\quad\forall y\in H_m.
\end{equation}

With these definitions, the (fully discrete) $hp$-cG time marching scheme is iteratively given as follows: For given initial value~$U_{m-1}:=\lim_{t\nearrow t_{m-1}}U|_{I_{m-1}}(t)\in H$ (with~$U_0:=u_0$, where~$u_0\in H$ is the initial value from~\eqref{eq:1}), we find~$U|_{I_m}\in\mathcal{P}^{r_m+1}(I_m;H_m)$ through the weak formulation
\begin{equation}
\begin{split}
 \int_{I_m} (U',V)_H\dif t&=\int_{I_m} (\F(U),V)_H  \dif t \qquad\forall V\in\mathcal{P}^{r_m}(I_m;H_m),\\
 U(t_{m-1})&=\pi_mU_{m-1},
  \label{eq:cG}
  \end{split}
\end{equation}
for any~$1\le m\le M$. Notice that, in order to enforce the initial condition on each individual time step, the local trial space has one degree of freedom more than the local test space. Furthermore, if~$H_1=H_2=\ldots=H_M$, we remark that the continuous Galerkin solution~$U$ is globally continuous on~$(0,T)$.

%The following existence result for the cG method will be proved in Section~\ref{sc:ex_cG}.

%\begin{theorem}\label{thm:cG}
%Let~$m\ge 1$. Then, if the local time step~$k_m>0$ is chosen sufficiently small (independent of the local polynomial degree~$r_m$), the continuous Galerkin method~\eqref{eq:cG} on~$I_m$ possesses at least one solution~$U\in\mathcal{P}^{r_m+1}(I_m;H_m)$.
%\end{theorem}
%
%\begin{remark}\label{rm:cG}
%It should be emphasized that the existence criterion~\eqref{eq:excG} is independent of the local polynomial degree~$r_m$.
%\end{remark}

\subsection{$hp$-dG Time Stepping} 
In order to define the discontinuous Galerkin scheme, some additional notation is required: We define the one-sided limits of a piecewise continuous function $U$ at each time node $t_m$ by
\[
\Up_m \coloneqq \lim_{s\searrow 0} U(t_m+s), \qquad
\Um_m \coloneqq \lim_{s\searrow 0} U(t_m-s).
\]
Then, the discontinuity jump of $U$ at $t_m$, $0\le m \le M-1$, is defined by $\jump{U}_m := \Up_m - \Um_m$, where we let~$\Um_0\coloneqq u_0$, with~$u_0$ being the initial condition from~\eqref{eq:1}. Then, the (fully discrete) $hp$-dG time stepping method for~\eqref{eq:1} reads: Find $U|_{I_m} \in \mathcal{P}^{r_m}(I_m;H_m)$ such that
\begin{equation}
\begin{split}
  \int_{I_m} (U',V)_H  \dif t 
  &+  (\jump{U}_{m-1}, V_{m-1}^+)_H
   = 
  \int_{I_m}(\F(U),V)_H \dif t\qquad\forall V\in\mathcal{P}^{r_m}(I_m;H_m),
  \label{eq:dG}
  \end{split}
\end{equation}
for any~$1\le m\le M$. We emphasize that, in contrast to the continuous Galerkin formulation, the trial and test spaces are the same for the discontinuous Galerkin scheme. This is due to the fact that the initial values are weakly imposed (by means of an upwind flux) on each time interval.

%
%\begin{remark}
%As for the cG method (cf.~Remark~\ref{rm:cG}), we stress the fact that the existence criterion~%\eqref{eq:exdG} is independent of the local polynomial degree~$r_m$.
%\end{remark}
%

%%%%%%%%%%%%%%%%%%%%%%%%%%%%%%%%%%%%%%%%%%%%%%%%%%%%%%%%%%%%%%%%%%%%%%
%%%%%%%%%%%%%%%%%%%%%%%%%%%%%%%%%%%%%%%%%%%%%%%%%%%%%%%%%%%%%%%%%%%%%%
\section{Existence of Discrete Solutions}\label{existdisc}
%%%%%%%%%%%%%%%%%%%%%%%%%%%%%%%%%%%%%%%%%%%%%%%%%%%%%%%%%%%%%%%%%%%%%%
%%%%%%%%%%%%%%%%%%%%%%%%%%%%%%%%%%%%%%%%%%%%%%%%%%%%%%%%%%%%%%%%%%%%%%

In this Section our goal is to show existence of solutions to the discrete local problems~\eqref{eq:cG} and~\eqref{eq:dG}:

\begin{theorem}\label{thm:cGdG}
Let~$m\ge 1$. Then, if the local time step~$k_m>0$ is chosen sufficiently small (independent of the local polynomial degree~$r_m$), then the continuous Galerkin method~\eqref{eq:cG} and the discontinuous Galerkin method~\eqref{eq:dG} on~$I_m$ both possess at least one solution~$U_{\cG}\in\mathcal{P}^{r_m+1}(I_m;H_m)$ and~$U_{\dG}\in\mathcal{P}^{r_m}(I_m;H_m)$, respectively.
\end{theorem}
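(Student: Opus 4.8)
The plan is to recast each scheme, restricted to the single interval~$I_m$ with the previous value $U_{m-1}$ fixed, as a fixed-point equation for a continuous self-map of a closed ball in the \emph{finite-dimensional} trial space (finite-dimensional because $H$ has been replaced by $H_m$), and then to invoke Brouwer's fixed-point theorem. The whole difficulty lies in arranging the self-mapping estimate with constants that do not depend on the local degree~$r_m$.

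For the cG scheme, observe that the derivative of any $U\in\mathcal{P}^{r_m+1}(I_m;H_m)$ lies in $\mathcal{P}^{r_m}(I_m;H_m)$, which is precisely the test space in~\eqref{eq:cG}. Hence~\eqref{eq:cG} is equivalent to the strong form $U'=\Pi_m\F(U)$ on~$I_m$ together with $U(t_{m-1})=\pi_mU_{m-1}$, where $\Pi_m$ is the $L^2(I_m;H)$-orthogonal projection onto $\mathcal{P}^{r_m}(I_m;H_m)$ (so $\norm{\Pi_m}_{L^2(I_m;H)\to L^2(I_m;H)}=1$, independently of~$r_m$). Integration in time turns this into the fixed-point problem $U=\Phi(U)$ with $\Phi(U)(t):=\pi_mU_{m-1}+\int_{t_{m-1}}^t\Pi_m\F(U)(s)\dif s$, and $\Phi$ maps $\mathcal{P}^{r_m+1}(I_m;H_m)$ into itself. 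Its continuity follows from the continuity of~$\F$ and the relative compactness of bounded subsets of the finite-dimensional space~$H_m$. For the self-mapping property, set $\rho:=\norm{U_{m-1}}_H+1$ and consider the ball $B_\rho=\{U\in\mathcal{P}^{r_m+1}(I_m;H_m):\norm{U}_{L^\infty(I_m;H)}\le\rho\}$; since $\overline{I_m}\times\{x\in H_m:\norm{x}_H\le\rho\}$ is compact, $\norm{\F(t,x)}_H$ is bounded there by some $c=c(\rho,H_m)<\infty$, whence, by Cauchy--Schwarz,
\[
\norm{\Phi(U)}_{L^\infty(I_m;H)}\le\norm{U_{m-1}}_H+\sqrt{k_m}\,\norm{\Pi_m\F(U)}_{L^2(I_m;H)}\le\norm{U_{m-1}}_H+k_m\,c\le\rho
\]
as soon as $k_m\le 1/c$, a threshold independent of~$r_m$. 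Brouwer's theorem then yields a fixed point $U_{\cG}\in B_\rho$ of~$\Phi$, which solves~\eqref{eq:cG}.

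For the dG scheme the discrete time-derivative cannot be inverted so directly, and this is the technical core. Following~\cite{ScWi10}, I would introduce the linear operator $\G_m\colon\mathcal{P}^{r_m}(I_m;H_m)\to\mathcal{P}^{r_m}(I_m;H_m)$ characterised by
\[
\int_{I_m}(\G_mW,V)_H\dif t=\int_{I_m}(W',V)_H\dif t+(W(t_{m-1}),V(t_{m-1}))_H\qquad\forall V\in\mathcal{P}^{r_m}(I_m;H_m),
\]
well defined by Riesz representation. Testing with $V=W$ and with $V=W'$ (plus integration by parts) shows $\G_m$ is injective, hence bijective. Representing the boundary contribution $V\mapsto(U_{m-1},V(t_{m-1}))_H$ through its $L^2(I_m;H)$-Riesz element and observing that $\G_m$ applied to the constant polynomial $\pi_mU_{m-1}$ reproduces exactly that element, one rewrites~\eqref{eq:dG} as
\[
U=\Psi(U):=\G_m^{-1}\Pi_m\F(U)+\pi_mU_{m-1}\qquad\text{on }\mathcal{P}^{r_m}(I_m;H_m).
\]
The decisive step — and the main obstacle — is to prove that, after rescaling to a reference interval, $\G_m$ is an isomorphism whose inverse is bounded \emph{independently of~$r_m$}: concretely, $\norm{\G_m^{-1}g}_{L^2(I_m;H)}\le Ck_m\norm{g}_{L^2(I_m;H)}$ and, since the growth bound on the merely continuous $\F$ needs a pointwise-in-time control of~$U$ that cannot be recovered from an $L^2$ bound uniformly in~$r_m$, also the companion estimate $\norm{\G_m^{-1}\Pi_mg}_{L^\infty(I_m;H)}\le Ck_m\norm{g}_{L^\infty(I_m;H)}$ for the associated discrete solution operator; both follow from an explicit computation on the reference polynomial space (e.g.\ in a Legendre basis), in the spirit of~\cite{ScWi10}. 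Granting this, $\Psi$ is continuous, and on the ball $\{\norm{U}_{L^\infty(I_m;H)}\le\rho\}$ with $\rho=\norm{U_{m-1}}_H+1$ one gets $\norm{\Psi(U)}_{L^\infty(I_m;H)}\le Ck_m\,c(\rho,H_m)+\norm{U_{m-1}}_H\le\rho$ whenever $k_m$ is small (independently of~$r_m$), so Brouwer delivers a solution $U_{\dG}$ of~\eqref{eq:dG}.

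In short, the cG part is comparatively soft once the identity $U'=\Pi_m\F(U)$ is noticed, whereas the substantial work is the degree-uniform isomorphism and stability analysis of the discrete operator~$\G_m$ in a time-$L^\infty$-type norm that is strong enough to feed back into the bound on~$\F$. I would also stress that the constants $c(\rho,H_m)$, and hence the smallness thresholds for~$k_m$, depend on the discretisation spaces~$H_m$ and on $\norm{U_{m-1}}_H$, but never on~$r_m$, which is exactly what Theorem~\ref{thm:cGdG} claims.
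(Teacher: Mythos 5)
Your overall strategy coincides with the paper's: recast each local problem as a fixed-point equation via a strong form, establish a degree-independent self-mapping estimate on a compact convex subset of the (finite-dimensional) trial space, and apply Brouwer's theorem. The cG half is essentially complete and matches Section~\ref{sc:ex_cG}; the only blemish is that your constant $c(\rho,H_m)$ is a supremum of $\|\F\|_H$ over $\overline{I_m}\times\{\|x\|_H\le\rho\}$, and $\overline{I_m}$ depends on the very $k_m$ you are about to constrain. The paper avoids this circularity by fixing a window $[t_{m-1},t_{m-1}+\theta_m]$ in advance, taking the maximum $K_m$ there, and then requiring $k_m\le\min(\theta_m,K_m^{-1}\kappa_m)$; you should do the same. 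For the dG half, your operator $\G_m$ is exactly the paper's discrete derivative $\chi(W)=W'+\L^{r_m}_m(W^+_{m-1})$, your injectivity argument is sound, and your reduction of~\eqref{eq:dG} to $U=\pi_mU_{m-1}^-+\G_m^{-1}\bigl(\Pi^{r_m}_m\F(U)\bigr)$ is precisely~\eqref{eq:dGFP}.

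The gap is the estimate you yourself single out as decisive: $\|\G_m^{-1}\Pi^{r_m}_mg\|_{L^\infty(I_m;H)}\le Ck_m\|g\|_{L^\infty(I_m;H)}$ with $C$ independent of~$r_m$. You assert that it ``follows from an explicit computation on the reference polynomial space,'' but this is exactly the new technical content of the paper (Proposition~\ref{pr:chi}), and it does not drop out of a routine basis computation: expressing $\G_m^{-1}$ in the Legendre basis still leaves you summing over $r_m+1$ modes, and generic term-by-term bounds (or any detour through an inverse inequality such as $\|U\|_{L^\infty}\lesssim k_m^{-\nicefrac12}(r_m+1)\|U\|_{L^2}$) produce constants that grow with~$r_m$ --- which, as you correctly note, would ruin the theorem. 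The paper obtains $C_\chi=2$ by two specific devices: (i) the telescoping identity $z-\int_{t_{m-1}}^{t}\L_m^{r_m}(z)\,\dd\tau=\tfrac{z}{2}(-1)^{r_m+1}\bigl(\widehat K_{r_m+1}-\widehat K_{r_m}\bigr)\circ F_m^{-1}$, which collapses the Legendre expansion of the lifting to just two terms and yields the exact equality $\sup_{t\in I_m}\|z-\int_{t_{m-1}}^{t}\L_m^{r_m}(z)\,\dd\tau\|_X=\|z\|_X$ (Lemma~\ref{lm:SLC}); and (ii) a duality argument testing $\chi(U)$ against $(-1)^{r_m}U^+_{m-1}\bigl(\widehat K_{r_m}\circ F_m^{-1}\bigr)$, whose $L^2$-orthogonality to $U'\in\mathcal{P}^{r_m-1}(I_m;X)$ isolates the boundary term and gives $\|U^+_{m-1}\|_X\le\|\chi(U)\|_{L^1(I_m;X)}$, i.e.~\eqref{eq:BN}. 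Combining the two gives $\|U\|_{L^\infty(I_m;X)}\le 2\|\chi(U)\|_{L^1(I_m;X)}\le 2k_m^{1-\nicefrac{1}{p}}\|\chi(U)\|_{L^p(I_m;X)}$. Without an argument of this kind your dG existence proof is incomplete; with it, it is the paper's proof.
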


Our general strategy of proof is to represent the Galerkin formulations in terms of strong equations, and then to derive suitable fixed-point formulations. Subsequently, the existence of discrete solutions will follow from the application of Brower's fixed point theorem.

\subsection{Existence of cG Solutions}\label{sc:ex_cG}
We begin by rewriting~\eqref{eq:cG} as finding~$U\in\mathcal{P}^{r_m+1}(I_m;H_m)$ such that
\begin{align*}
 \int_{I_m} (U'-\Pi^{r_m}_m\F(U),V)_H\dif t&=0 \qquad\forall V\in\mathcal{P}^{r_m}(I_m;H_m),\\
 U(t_{m-1})&=\pi_mU_{m-1}.
\end{align*}
Here, $\Pi^{r_m}_m:\,L^2(I_m;H)\to \mathcal{P}^{r_m}(I_m;H_m)$ denotes the $L^2$-projection onto the space
$\mathcal{P}^{r_m}(I_m;H_m)$, which is uniquely defined by
\begin{equation}\label{eq:L2}
u\mapsto\Pi_m^{r_m}u:\quad
\int_{I_m}(u-\Pi^{r_m}_mu,V)_{H}\dif t=0\qquad\forall V\in\mathcal{P}^{r_m}(I_m;H_m).
\end{equation}
Thence, noticing that~$U'-\Pi^{r_m}_m\F(U)\in\mathcal{P}^{r_m}(I_m;H_m)$, we obtain the strong form
\begin{align*}
U'-\Pi^{r_m}_m\F(U)&=0\qquad\textrm{on }I_m,\\
U(t_{m-1})&=\pi_mU_{m-1}.
\end{align*}
Integration results in
\begin{equation}\label{eq:inteq}
U(t)=\pi_m U_{m-1}+\int_{t_{m-1}}^t\Pi^{r_m}_m\F(U)\dif\tau,\qquad t\in I_m.
\end{equation}
We see that the operator
\begin{equation}\label{eq:TcG}
\HcG_m(U)(t):=\pi_m U_{m-1}+\int_{t_{m-1}}^t\Pi^{r_m}_m\F(U)\dif\tau
\end{equation}
maps~$\mathcal{P}^{r_m+1}(I_m;H_m)$ into itself, and hence, the integral equation~\eqref{eq:inteq} is a fixed point formulation,
\begin{equation}\label{eq:cGfix}
\HcG_m(U)=U,
\end{equation}
on~$\mathcal{P}^{r_m+1}(I_m;H_m)$. In particular, any solution of~\eqref{eq:inteq} will solve~\eqref{eq:cG}. 

We are now ready to prove Theorem~\ref{thm:cGdG} for the continuous Galerkin method~\eqref{eq:cG}: For some~$\kappa_m, \theta_m>0$ (with~$t_{m-1}+\theta_m\le T$) let us define the set
\[
Q_m=[t_{m-1},t_{m-1}+\theta_m]\times B_m,
\]
where
\begin{equation}\label{eq:cGB}
B_m=\left\{y\in H_m:\,\|y-\pi_mU_{m-1}\|_H\le\kappa_m\right\}.
\end{equation} 
Since~$\F$ is continuous, its maximum on the \emph{compact} set~$Q_m$,
\begin{equation}\label{eq:KcG}
K_m:=\max_{(t,y)\in Q_m}\|\F(t,y)\|_H,
\end{equation}
exists. We let
\begin{equation*}
0<k_m\le\min(\theta_m,K_m^{-1}\kappa_m).
\end{equation*}
Then, we introduce
\begin{equation}\label{eq:McG}
M^{\cG}_m:=\{Y\in\mathcal{P}^{r_m+1}(I_m;H_m):\,Y(t)\in B_m\,\forall t\in\overline I_m\},
\end{equation}
where~$I_m=(t_{m-1},t_m)$, with~$t_m=t_{m-1}+k_m$. 

Let~$U\in M^{\cG}_m$ be arbitrary,
and~$t^\star\in\overline I_m$ such that
\[
\NN{\HcG_m(U)(t^\star)-\pi_mU_{m-1}}_H=\NN{\HcG_m(U)-\pi_mU_{m-1}}_{L^\infty(I_m;H)}.
\]
Then, using Bochner's Theorem as well as the Cauchy-Schwarz inequality, yields
\begin{align*}
\NN{\HcG_m(U)-\pi_mU_{m-1}}_{L^\infty(I_m;H)}
&\le\NN{\int_{t_{m-1}}^{t^\star}\Pi^{r_m}_m\F(U)\dif\tau}_H
\le\int_{I_m}\NN{\Pi^{r_m}_m\F(U)}_H\dif\tau\\
&\le k_m^{\nicefrac{1}{2}}\NN{\Pi^{r_m}_m\F(U)}_{L^2(I_m;H)}.
\end{align*}
Taking into account the boundedness of the~$L^2$-projection on~$I_m$ (with constant~1) leads to
\[
\NN{\HcG_m(U)-\pi_mU_{m-1}}_{L^\infty(I_m;H)}\le k_m^{\nicefrac{1}{2}}\NN{\F(U)}_{L^2(I_m;H)}\le k_m\NN{\F(U)}_{L^\infty(I_m;H)}.
\]
Therefore,
\[
\NN{\HcG_m(U)-\pi_mU_{m-1}}_{L^\infty(I_m;H)}\le K_mk_m\le \kappa_m.
\]
Thus, we have~$\HcG_m(U)\in M^{\cG}_m$, and more generally, it follows~$\HcG_m(M^{\cG}_{m})\subseteq M^{\cG}_{m}$. Finally, since~$M^{\cG}_{m}$ is convex and compact, and~$\HcG_m$ is continuous, Brower's fixed point theorem implies that there exists at least one solution of~\eqref{eq:cGfix} in~$M^{\cG}_{m}$, and thus of~\eqref{eq:cG}.

\subsection{Existence of dG Solutions}

The situation for the dG method is more involved. We will commence by looking at a discrete dG time operator appearing in the dG formulation.

\subsubsection{Discrete dG Time Operator} Following~\cite[Section~4.1]{ScWi10} we define the lifting operator, for~$1\le m\le M$,
\[
\L^{r_m}_m:\,X\to\mathcal{P}^{r_m}(I_m;X),
\]
by
\[
\int_{I_m}(\L^{r_m}_m(z),V)_X\dif t=(z,V^+_{m-1})_X\qquad\forall V\in\mathcal{P}^{r_m}(I_m;X),\, z\in X,
\]
on a real Hilbert space~$X$, with inner product~$(\cdot,\cdot)_X$, and norm~$\|\cdot\|_X$. 

In view of this definition with~$X=H_m$, we have for the dG solution~$U\in\mathcal{P}^{r_m}(I_m;H_m)$ from~\eqref{eq:dG}: 
\begin{align*}
0&=\int_{I_m}\left\{ (U',V)_H-(\F(U),V)_H\right\}\dif t+(\jump{U}_{m-1}, V_{m-1}^+)_H \\
&=\int_{I_m}\left\{ (U',V)_H-(\Pi^{r_m}_m\F(U),V)_H\right\}\dif t
+(\pi_m\jump{U}_{m-1}, V_{m-1}^+)_H \\
%&=\int_{I_m}\left\{ (U'+\L^{r_m}_m(\pi_m\jump{U}_{m-1})-\F(U),V)_H\right\}\dif t\\
&=\int_{I_m}\left\{ (U'+\L^{r_m}_m(\pi_m\jump{U}_{m-1})-\Pi^{r_m}_m\F(U),V)_{H}\right\}\dif t,
\end{align*}
for any~$V\in \mathcal{P}^{r_m}(I_m;H_m)$. Here, $\pi_m$ is the $H$-orthogonal projection from~\eqref{eq:Riesz}, and $\Pi^{r_m}_m$ is the $L^2$-projection from~\eqref{eq:L2}.

Then, since $U'$, $\L^{r_m}_m(\pi_m\jump{U}_{m-1})$, $\Pi^{r_m}_m\F(U)$ all belong to~$\mathcal{P}^{r_m}(I_m;H_m)$, we arrive at the strong formulation
\begin{equation}\label{eq:strong}
U'+\L^{r_m}_m(\pi_m\jump{U}_{m-1})= \Pi^{r_m}_m\F(U)
\end{equation}
of~\eqref{eq:dG}. The term on the left-hand side of this equation is the $hp$-dG time discretization of the continuous derivative operator~$u\mapsto u'$. This motivates the definition of a discrete operator
\begin{equation}\label{eq:chi}
\chi:\,\mathcal{P}^{r_m}(I_m;H_m)\to \mathcal{P}^{r_m}(I_m;H_m)
\end{equation}
given by
\begin{equation}\label{eq:chi2}
U\mapsto \chi(U)=U'+\L^{r_m}_m(U_{m-1}^+).
\end{equation}
For the proof of existence of solutions of~\eqref{eq:strong} it is important to notice that the linear operator~$\chi$ is invertible.

\begin{proposition}\label{pr:chi}
Let~$X$ be a real Hilbert space, and~$1\le m\le M$. Then, the operator~$\chi$ from~\eqref{eq:chi}--\eqref{eq:chi2} is an isomorphism on~$\mathcal{P}^{r_m}(I_m;X)$. In addition, there exists a constant~$C_\chi>0$ independent of the time step~$k_m$ and the local approximation order~$r_m$ such that, for any~$p\in[1,\infty]$, there holds the bound
\begin{equation}\label{eq:chi1}
\NN{U}_{L^\infty(I_m;X)}\le C_\chi k_m^{1-\nicefrac{1}{p}}\NN{\chi(U)}_{L^p(I_m;X)},
\end{equation}
for any~$U\in\mathcal{P}^{r_m}(I_m;X)$. 
\end{proposition}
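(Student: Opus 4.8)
The plan is to transplant the problem to a fixed reference interval, to prove invertibility of~$\chi$ by an energy identity, and then to extract~\eqref{eq:chi1} from an explicit representation of~$\chi^{-1}$; the only genuinely delicate point will be a degree-robust uniform bound for a single scalar ``fundamental'' polynomial.

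\emph{Step 1: reduction to $\hat I=(0,1)$.} Pull $I_m$ back to $\hat I=(0,1)$ via $t=t_{m-1}+k_m\hat t$ and set $\widehat U(\hat t):=U(t)$. Then $U'(t)=k_m^{-1}\widehat U'(\hat t)$, and inserting the change of variables into the defining relation of $\L^{r_m}_m$ (tested against arbitrary $V\in\mathcal P^{r_m}(I_m;X)$) shows that the lifting transplants to $k_m^{-1}\widehat\L^{r_m}$, where $\widehat\L^{r_m}\colon X\to\mathcal P^{r_m}(\hat I;X)$ is the analogous lifting on $\hat I$. Hence $\chi$ transplants to $k_m^{-1}\widehat\chi$, with $\widehat\chi(\widehat U)=\widehat U'+\widehat\L^{r_m}(\widehat U(0))$, and the scaling of the Bochner norms gives $\NN{U}_{L^\infty(I_m;X)}=\NN{\widehat U}_{L^\infty(\hat I;X)}$ and $\NN{\chi(U)}_{L^p(I_m;X)}=k_m^{\nicefrac{1}{p}-1}\NN{\widehat\chi(\widehat U)}_{L^p(\hat I;X)}$. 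Thus the isomorphism property and~\eqref{eq:chi1} (for every $p$) are equivalent to the corresponding statements for $\widehat\chi$ on $\hat I$; moreover, since $|\hat I|=1$ implies $\NN{\cdot}_{L^1(\hat I;X)}\le\NN{\cdot}_{L^p(\hat I;X)}$, it suffices to establish
\[
\NN{\widehat U}_{L^\infty(\hat I;X)}\le C_\chi\,\NN{\widehat\chi(\widehat U)}_{L^1(\hat I;X)}\qquad\forall\,\widehat U\in\mathcal P^{r_m}(\hat I;X),
\]
with $C_\chi$ independent of $r_m$; then \eqref{eq:chi1} follows for all $p\in[1,\infty]$ by rescaling.

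\emph{Step 2: invertibility.} Representing a polynomial by its coefficient vector identifies $\mathcal P^{r_m}(\hat I;X)$ with $X^{r_m+1}$ and $\widehat\chi$ with $M\otimes\mathrm{id}_X$ for a fixed real $(r_m+1)\times(r_m+1)$ matrix $M$ (differentiation plus the rank-one lifting contribution); hence it suffices to show that $M$ is injective. If $\widehat\chi(\widehat U)=0$, then $\widehat U$ satisfies the equivalent weak identity $\int_{\hat I}(\widehat U',V)_X\dif t+(\widehat U(0),V(0))_X=0$ for all $V\in\mathcal P^{r_m}(\hat I;X)$ (obtained from the definition of $\widehat\L^{r_m}$). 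Choosing $V=\widehat U$ and integrating by parts in time yields $\tfrac12\NN{\widehat U(1)}_X^2+\tfrac12\NN{\widehat U(0)}_X^2=0$, so $\widehat U(0)=0$, whence $\widehat U'=\widehat\L^{r_m}(0)=0$ and $\widehat U\equiv0$. Thus $\widehat\chi$ is an isomorphism on $\mathcal P^{r_m}(\hat I;X)$, with bounded inverse $M^{-1}\otimes\mathrm{id}_X$, and by Step 1 the same holds on $\mathcal P^{r_m}(I_m;X)$.

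\emph{Step 3: the quantitative bound and the main obstacle.} Set $g:=\widehat\chi(\widehat U)\in\mathcal P^{r_m}(\hat I;X)$ and note that $\widehat\L^{r_m}(z)=z\,\varpi_{r_m}$, where $\varpi_{r_m}\in\mathcal P^{r_m}(\hat I;\R)$ is the lifting of the scalar~$1$; in the Legendre basis $\{L_j\}$ on $\hat I$ (normalized by $\int_{\hat I}L_jL_k\dif t=\delta_{jk}/(2j+1)$, so that $|L_j|\le1$ and $L_j(0)=(-1)^j$) one has $\varpi_{r_m}(s)=\sum_{j=0}^{r_m}(2j+1)L_j(s)L_j(0)$, the reproducing kernel of $\mathcal P^{r_m}(\hat I;\R)$ at $s=0$. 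Taking the $L^2(\hat I)$-inner product of the strong form $\widehat U'+\varpi_{r_m}\widehat U(0)=g$ with $L_{r_m}$, using $\widehat U'\in\mathcal P^{r_m-1}(\hat I;X)\perp L_{r_m}$ and $\int_{\hat I}\varpi_{r_m}L_{r_m}\dif t=L_{r_m}(0)=(-1)^{r_m}$, gives the closed form $\widehat U(0)=(-1)^{r_m}\int_{\hat I}L_{r_m}(s)g(s)\dif s$, and therefore $\NN{\widehat U(0)}_X\le\NN{g}_{L^1(\hat I;X)}$. Integrating the strong form from $0$ then yields $\widehat U(t)=\omega_{r_m}(t)\,\widehat U(0)+\int_0^t g(s)\dif s$ with $\omega_{r_m}(t):=1-\int_0^t\varpi_{r_m}(s)\dif s$, hence
\[
\NN{\widehat U}_{L^\infty(\hat I;X)}\le\NN{\omega_{r_m}}_{L^\infty(\hat I)}\NN{\widehat U(0)}_X+\NN{g}_{L^1(\hat I;X)}\le\bigl(1+\NN{\omega_{r_m}}_{L^\infty(\hat I)}\bigr)\NN{g}_{L^1(\hat I;X)}.
\]
The \textbf{main obstacle} is the remaining claim $\sup_{r_m}\NN{\omega_{r_m}}_{L^\infty(\hat I)}<\infty$. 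This cannot be done termwise: both $\widehat\L^{r_m}$ and the $L^2(\hat I)$-projection have $L^\infty$-operator norms that grow with $r_m$ (e.g.\ $\NN{\varpi_{r_m}}_{L^\infty(\hat I)}=(r_m+1)^2$), so one must exploit the cancellation built into $\omega_{r_m}=1-\int_0^\cdot\varpi_{r_m}$. Concretely, $\omega_{r_m}(t)$ equals the value at $0$ of the $L^2(\hat I)$-projection onto $\mathcal P^{r_m}(\hat I;\R)$ of the indicator $\mathbf 1_{(t,1)}$, and the required uniform bound is a Gibbs-type estimate for Legendre partial sums, which can be established from the Mehler--Heine (Bessel) boundary-layer asymptotics of $L_{r_m}$ near the endpoints (or extracted from the known degree-robust $L^\infty$-stability estimates for $hp$-dG time stepping). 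Setting $C_\chi:=1+\sup_{r_m}\NN{\omega_{r_m}}_{L^\infty(\hat I)}$ completes the reference estimate, and Step~1 then delivers~\eqref{eq:chi1} for every $p\in[1,\infty]$.
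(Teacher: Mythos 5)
Your overall architecture is sound and in fact runs parallel to the paper's own proof: the reduction to an $L^1$ bound on a reference interval, the exact identity $\widehat U(0)=(-1)^{r_m}\int L_{r_m}g$ obtained by testing against the top-degree Legendre polynomial (this is precisely the paper's bound $\|U_{m-1}^+\|_X\le\|\chi(U)\|_{L^1(I_m;X)}$, and your version is even slightly sharper since it is an identity), and the representation $\widehat U=\omega_{r_m}\widehat U(0)+\int_0^\cdot g$ reducing everything to a degree-uniform bound on the single scalar polynomial $\omega_{r_m}=1-\int_0^\cdot\varpi_{r_m}$. Your Step~2 (invertibility via the energy identity $\tfrac12\|\widehat U(1)\|_X^2+\tfrac12\|\widehat U(0)\|_X^2=0$ and the tensor structure $M\otimes\mathrm{id}_X$) is correct and actually more explicit than what the paper offers for the isomorphism claim.

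The genuine gap is exactly where you flag it: you assert $\sup_{r_m}\|\omega_{r_m}\|_{L^\infty(\hat I)}<\infty$ but do not prove it, and the route you gesture at (Gibbs-type estimates for Legendre partial sums via Mehler--Heine asymptotics) is never executed. As it stands, the degree-robustness of $C_\chi$ --- the entire point of the proposition --- rests on an unproven claim. Moreover, the heavy machinery is unnecessary: the cancellation you correctly sense is needed is an elementary telescoping identity. Since $\varpi_{r_m}=\sum_{j=0}^{r_m}(2j+1)L_j(0)L_j$ and the antiderivative of a Legendre polynomial satisfies $\int L_{j}=\frac{L_{j+1}-L_{j-1}}{2j+1}$ (up to the normalization), the sum $\int_0^t\varpi_{r_m}$ telescopes and one obtains the closed form $\omega_{r_m}=\tfrac12(-1)^{r_m+1}\left(\widehat K_{r_m+1}-\widehat K_{r_m}\right)$ in the paper's notation; the uniform bound $|\widehat K_i|\le 1$ then gives $\|\omega_{r_m}\|_{L^\infty(\hat I)}=1$ exactly (this is the paper's Lemma~\ref{lm:SLC}), and hence $C_\chi=2$. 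You should replace the appeal to boundary-layer asymptotics by this two-line computation; with that, your proof is complete and essentially coincides with the paper's.
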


In order to establish this estimate, we require two auxiliary results which will be proved first.

\begin{lemma}\label{lm:SLC}
Let~$1\le m\le M$, and~$X$ a real Hilbert space. Then, there holds
\begin{equation}\label{eq:auxSLC}
\sup_{t\in I_m}\NN{z-\int_{t_{m-1}}^t\L^{r_m}_m(z)\,\dd\tau}_{X}=\|z\|_X,
\end{equation}
for any~$z\in X$.
\end{lemma}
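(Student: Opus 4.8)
The plan is to reduce the Hilbert-space-valued identity~\eqref{eq:auxSLC} to a purely scalar statement about a single explicit polynomial, and then to pin that polynomial down via its orthogonality properties.

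\emph{Step 1: reduction to a scalar lifting.} I would first note that the lifting operator acts diagonally. For fixed $z\in X$ and any $V\in\mathcal{P}^{r_m}(I_m;X)$ the map $t\mapsto(z,V(t))_X$ is a real polynomial of degree at most $r_m$; hence, if $\ell_m\in\mathcal{P}^{r_m}(I_m;\mathbb{R})$ denotes the scalar lifting characterized by $\int_{I_m}\ell_m w\,\dd t=w(t_{m-1})$ for all $w\in\mathcal{P}^{r_m}(I_m;\mathbb{R})$, then $\ell_m z$ satisfies the defining relation of $\L^{r_m}_m(z)$, and by uniqueness $\L^{r_m}_m(z)=\ell_m z$. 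Consequently $z-\int_{t_{m-1}}^t\L^{r_m}_m(z)\,\dd\tau=g_m(t)\,z$, where $g_m(t):=1-\int_{t_{m-1}}^t\ell_m(\tau)\,\dd\tau\in\mathcal{P}^{r_m+1}(I_m;\mathbb{R})$, so that~\eqref{eq:auxSLC} is equivalent to $\sup_{t\in I_m}|g_m(t)|=1$.

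\emph{Step 2: identifying $g_m$.} Since $g_m(t_{m-1})=1$ and $g_m$ is continuous up to the left endpoint of $I_m$, the lower bound $\sup_{t\in I_m}|g_m(t)|\ge 1$ is immediate. For the reverse inequality I would exploit the relation $\int_{I_m}g_m'v\,\dd t=-\int_{I_m}\ell_m v\,\dd t=-v(t_{m-1})$, valid for all $v\in\mathcal{P}^{r_m}(I_m;\mathbb{R})$; integrating the left-hand side by parts gives $g_m(t_m)v(t_m)=\int_{I_m}g_m v'\,\dd t$ for all such $v$. Taking $v\equiv 1$ yields $g_m(t_m)=0$, and then $g_m$ is $L^2(I_m)$-orthogonal to $\{v':v\in\mathcal{P}^{r_m}(I_m;\mathbb{R})\}=\mathcal{P}^{r_m-1}(I_m;\mathbb{R})$. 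Therefore $g_m$ lies in the two-dimensional span of the shifted Legendre polynomials $P^m_{r_m}$ and $P^m_{r_m+1}$ on $I_m$, normalized so that $\NN{P^m_k}_{L^\infty(I_m)}=1$ with $P^m_k(t_{m-1})=(-1)^k$ and $P^m_k(t_m)=1$, and the two conditions $g_m(t_{m-1})=1$, $g_m(t_m)=0$ force
\[
g_m=\frac{(-1)^{r_m}}{2}\bigl(P^m_{r_m}-P^m_{r_m+1}\bigr).
\]
The triangle inequality together with $\NN{P^m_k}_{L^\infty(I_m)}=1$ then gives $|g_m(t)|\le 1$ on $\overline{I_m}$, whence $\sup_{t\in I_m}|g_m(t)|=1$, and recalling $\NN{g_m(t)z}_X=|g_m(t)|\,\|z\|_X$ completes the proof.

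\emph{Main obstacle.} The delicate point is the degree-independent upper bound $|g_m|\le 1$; the key device is the integration-by-parts identity, which simultaneously produces the endpoint value $g_m(t_m)=0$ and the orthogonality $g_m\perp\mathcal{P}^{r_m-1}(I_m;\mathbb{R})$, thereby collapsing $g_m$ to the above two-term Legendre combination whose sup-norm equals $1$ independently of $r_m$. The degenerate case $r_m=0$, where $\ell_m\equiv k_m^{-1}$ and $g_m(t)=1-(t-t_{m-1})/k_m$, is covered by the same reasoning and can also be checked directly.
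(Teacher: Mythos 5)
Your argument is correct, and it reaches the paper's key identity by a genuinely different route. The paper maps to the reference interval and \emph{imports} the explicit series representation of $z-\int_{-1}^{\hat t}\widehat\L^{r_m}(z)\,\dd\widehat\tau$ from \cite[Eq.~(35) and Lemma~8]{ScWi10}, then telescopes it down to $\tfrac{z}{2}(-1)^{r_m+1}\bigl(\widehat K_{r_m+1}-\widehat K_{r_m}\bigr)$ and invokes $|\widehat K_i|\le 1$. You instead derive the same two-term Legendre combination from scratch: the diagonal-action reduction $\L^{r_m}_m(z)=\ell_m z$ is valid (uniqueness of the lifting follows from nondegeneracy of the $L^2(I_m;X)$ inner product on $\mathcal{P}^{r_m}(I_m;X)$), and your integration-by-parts identity correctly yields both the endpoint value $g_m(t_m)=0$ and the orthogonality $g_m\perp\mathcal{P}^{r_m-1}(I_m;\mathbb{R})$, which together with $g_m(t_{m-1})=1$ pin $g_m$ down in the two-dimensional span of the degree-$r_m$ and degree-$(r_m+1)$ Legendre polynomials via a nonsingular $2\times 2$ system; your formula $g_m=\tfrac{(-1)^{r_m}}{2}(P^m_{r_m}-P^m_{r_m+1})$ agrees with the paper's after relabeling signs. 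What your approach buys is self-containedness — no appeal to the explicit formula for the lifting from the earlier work, only to its weak characterization — at the cost of having to justify the scalar reduction and the orthogonal-complement identification; what the paper's approach buys is brevity, since the heavy lifting is already done in the cited reference. Both proofs ultimately rest on the same degree-independent fact $\NN{\widehat K_i}_{L^\infty(-1,1)}=1$, and your handling of the edge case $r_m=0$ (where $\mathcal{P}^{-1}=\{0\}$ and the orthogonality is vacuous) is consistent with the paper's convention.
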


\begin{proof}
Let us first consider the lifting operator~$\widehat\L^{r_m}:\,X\to\mathcal{P}^{r_m}(\widehat{I};X)$ on the unit interval~$\widehat{I}=(-1,1)$, defined by
\[
\int_{-1}^1(\widehat\L^{r_m}(z),\widehat V)_X\,\dd\hat t=(z,\widehat V(-1))_X\qquad\forall\widehat V\in\mathcal{P}^{r_m}(\widehat I;X),\, z\in X.
\]
Referring to~\cite[Eq.~(35) and Lemma~8]{ScWi10} there holds the explicit formula
\[
z-\int_{-1}^{\hat t}\widehat\L^{r_m}(z)\,\dd\widehat\tau=\frac{z}{2}\left(1-\hat t+\sum_{i=2}^{r_m+1}(-1)^i(2i-1)\widehat Q_i(\hat t)\right),\qquad t\in\widehat I,
\]
where
\[
\widehat Q_i(\hat t)=\int_{-1}^{\hat t}\widehat K_{i-1}(\widehat\tau)\,\dd\widehat\tau=\frac{\widehat K_i(\hat t)-\widehat K_{i-2}(\hat t)}{2i-1},\qquad i\ge 2,
\]
with~$\{\widehat K_i\}_{i\ge 0}$ signifying the family of Legendre polynomials on~$(-1,1)$ (with degrees $\deg(\widehat K_i)=i$), scaled such that~$\widehat K_i(-1)=(-1)^i$; cf.~\cite[Eq.~(9) and Lemma~1]{ScWi10}. Combining the above identities, we obtain
\[
z-\int_{-1}^{\hat t}\widehat\L^{r_m}(z)\,\dd\widehat\tau=\frac{z}{2}\left(1-\hat t+\sum_{i=2}^{r_m+1}(-1)^i\left(\widehat K_i(\hat t)-\widehat K_{i-2}(\hat t)\right)\right).
\]
Noticing the telescope sum as well as the fact that~$\widehat K_0(\hat t)=1$ and~$\widehat K_1(\hat t)=\hat t$, we arrive at
\[
z-\int_{-1}^{\hat t}\widehat\L^{r_m}(z)\,\dd\widehat\tau=\frac{z}{2}(-1)^{r_m+1}\left(\widehat K_{r_m+1}(\hat t)-\widehat K_{r_m}(\hat t)\right).
\]
Then, employing the fact that
\begin{equation}\label{eq:Kinf}
|\widehat K_i(\hat t)|\le 1\qquad\forall \hat t\in[-1,1],\,\forall i\ge 0,
\end{equation}
results in
\[
\NN{z-\int_{-1}^{\hat t}\widehat\L^{r_m}(z)\,\dd\widehat\tau}_X
\le\|z\|_X\qquad\forall \hat t\in\widehat I.
\]
Now we define the affine mapping
\begin{equation}\label{eq:Fm}
F_m:\,\widehat{I}\to I_m,\qquad \hat t\mapsto\frac{1}{2}k_m\hat t+\frac12(t_{m-1}+t_m).
\end{equation}
A scaling argument implies that
\[
\L^{r_m}_m(z)\circ F_m=\frac{2}{k_m}\widehat\L^{r_m}(z);
\]
see~\cite[Lemma~7]{ScWi10}. Hence, by a change of variables, $\tau=F_m(\widehat\tau)$, $\dd\tau=\frac{k_m}{2}\dd\widehat\tau$, we conclude that
\begin{align*}
\NN{z-\int_{t_{m-1}}^{t}\L_m^{r_m}(z)\,\dd\tau}_X
&=\NN{z-\int_{-1}^{F_m^{-1}(t)}\widehat\L^{r_m}(z)\,\dd\widehat\tau}_X
\le\|z\|_X\qquad\forall t\in I_m.
\end{align*}
Noticing that, for~$t=t_{m-1}$, there holds equality in the above bound, completes the proof.
\end{proof}

\begin{lemma}
Let~$1\le m\le M$, and~$X$ a real Hilbert space. Then, the bound
\begin{equation}\label{eq:BN}
\NN{U_{m-1}^+}_X\le \NN{\chi(U)}_{L^1(I_m;X)}
\end{equation}
holds true for any~$U\in\mathcal{P}^{r_m}(I_m;X)$.
\end{lemma}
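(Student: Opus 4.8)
The plan is to integrate the strong relation defining $\chi$, pass to the reference interval, and reuse the explicit formula for the ``integrated lifting'' already obtained in the proof of Lemma~\ref{lm:SLC}. Set $z:=U_{m-1}^+$. Since $\chi(U)=U'+\L^{r_m}_m(z)$ on $I_m$, integrating from $t_{m-1}$ to $t$ gives
\[
U(t)=\left(z-\int_{t_{m-1}}^t\L^{r_m}_m(z)\,\dd\tau\right)+\int_{t_{m-1}}^t\chi(U)\,\dd\tau,\qquad t\in\overline{I}_m .
\]
Transforming to $\widehat I=(-1,1)$ via the affine map $F_m$ from~\eqref{eq:Fm}, and using both the scaling $\L^{r_m}_m(z)\circ F_m=\tfrac{2}{k_m}\widehat\L^{r_m}(z)$ and the identity established in the proof of Lemma~\ref{lm:SLC},
\[
z-\int_{-1}^{\hat t}\widehat\L^{r_m}(z)\,\dd\widehat\tau=\frac{z}{2}(-1)^{r_m+1}\bigl(\widehat K_{r_m+1}(\hat t)-\widehat K_{r_m}(\hat t)\bigr),
\]
I obtain, writing $\hat U:=U\circ F_m\in\mathcal{P}^{r_m}(\widehat I;X)$ and $\hat g:=\chi(U)\circ F_m$,
\[
\hat U(\hat t)=\frac{z}{2}(-1)^{r_m+1}\bigl(\widehat K_{r_m+1}(\hat t)-\widehat K_{r_m}(\hat t)\bigr)+\frac{k_m}{2}\int_{-1}^{\hat t}\hat g(\widehat\tau)\,\dd\widehat\tau .
\]

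The decisive point is that $\hat U$ has degree at most $r_m$, so its Legendre mode of degree $r_m+1$ vanishes; equivalently, the ($X$-valued) $L^2(\widehat I)$-pairing of the last display with $\widehat K_{r_m+1}$ is zero by orthogonality. On the right-hand side the first term contributes $\tfrac{z}{2}(-1)^{r_m+1}\|\widehat K_{r_m+1}\|_{L^2(\widehat I)}^2=\tfrac{z}{2}(-1)^{r_m+1}\tfrac{2}{2r_m+3}$; for the second term I integrate by parts, using that $\widehat Q_{r_m+2}(\hat t):=\int_{-1}^{\hat t}\widehat K_{r_m+1}(\widehat\tau)\,\dd\widehat\tau=\tfrac{\widehat K_{r_m+2}-\widehat K_{r_m}}{2r_m+3}$ is a primitive of $\widehat K_{r_m+1}$ vanishing at $\hat t=\pm1$, which reduces it to $-\tfrac{k_m}{2}\int_{-1}^1\hat g\,\widehat Q_{r_m+2}\,\dd\hat t=\tfrac{k_m}{2(2r_m+3)}\int_{-1}^1\hat g\,\widehat K_{r_m}\,\dd\hat t$ (the $\widehat K_{r_m+2}$-contribution dropping since $\deg\hat g\le r_m$). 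Collecting terms yields
\[
z=(-1)^{r_m}\,\frac{k_m}{2}\int_{-1}^1\hat g(\hat t)\,\widehat K_{r_m}(\hat t)\,\dd\hat t .
\]

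To conclude I take $X$-norms, invoke the uniform bound $|\widehat K_{r_m}(\hat t)|\le1$ from~\eqref{eq:Kinf}, and change variables back via $t=F_m(\hat t)$, $\dd t=\tfrac{k_m}{2}\dd\hat t$:
\[
\NN{U_{m-1}^+}_X=\NN{z}_X\le\frac{k_m}{2}\int_{-1}^1\NN{\hat g(\hat t)}_X\,\dd\hat t=\int_{I_m}\NN{\chi(U)(t)}_X\,\dd t=\NN{\chi(U)}_{L^1(I_m;X)},
\]
which is the claim. The main obstacle is the middle step: correctly isolating $z$ from the degree-$(r_m+1)$ Legendre mode, i.e.\ the integration-by-parts and Legendre bookkeeping on $\widehat I$, and verifying that the constant that emerges is exactly $1$ — which hinges on the sharp normalization $\|\widehat K_i\|_{L^\infty(\widehat I)}=1$ rather than on any degree-dependent polynomial inequality. (An alternative to the $L^2(\widehat I)$-pairing is to equate the $\widehat K_{r_m+1}$-coefficients directly via $\int_{-1}^{\hat t}\widehat K_i=\tfrac{1}{2i+1}(\widehat K_{i+1}-\widehat K_{i-1})$; the same identity for $z$ results.)
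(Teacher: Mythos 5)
Your argument is correct. The integrated identity on the reference interval, the vanishing of the degree-$(r_m+1)$ Legendre coefficient of $\hat U$, the integration by parts with $\widehat Q_{r_m+2}$ (whose boundary terms vanish since $\int_{-1}^1\widehat K_{r_m+1}=0$), and the dropping of the $\widehat K_{r_m+2}$-contribution against $\hat g$ are all sound, and the resulting representation $z=(-1)^{r_m}\tfrac{k_m}{2}\int_{-1}^1\hat g\,\widehat K_{r_m}\,\dd\hat t$ together with~\eqref{eq:Kinf} gives exactly the claimed bound with constant~$1$. The paper reaches the same conclusion by a shorter duality argument: it tests $\chi(U)$ directly against the polynomial $(-1)^{r_m}U_{m-1}^+\bigl(\widehat K_{r_m}\circ F_m^{-1}\bigr)$, observes that $U'$ is $L^2$-orthogonal to it (being of degree $r_m-1$) while the lifting term produces $(U_{m-1}^+,\P(t_{m-1}^+))_X=\NN{U_{m-1}^+}_X^2$ by the very definition of $\L_m^{r_m}$, and then applies H\"older and divides by $\NN{U_{m-1}^+}_X$. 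The two proofs hinge on the identical mechanism --- pairing against the top-degree Legendre mode and the normalization $\|\widehat K_{r_m}\|_{L^\infty(-1,1)}\le1$ --- and indeed taking the $X$-inner product of your vector identity with $z$ recovers the paper's scalar identity verbatim. What your version buys is a genuine strengthening of the intermediate step (an exact vector-valued reconstruction formula for $U_{m-1}^+$ from $\chi(U)$, rather than only its norm), at the cost of more Legendre bookkeeping and of leaning on the explicit formula from the proof of Lemma~\ref{lm:SLC}, which the paper's proof of this lemma does not need; the paper's route is the more economical one if only the bound~\eqref{eq:BN} is wanted.
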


\begin{proof}
Let~$U\in\mathcal{P}^{r_m}(I_m;X)$. We define
\[
\P:=(-1)^{r_m}U_{m-1}^+\left(\widehat K_{r_m}\circ F_m^{-1}\right)\in\mathcal{P}^{r_m}(I_m; X),
\] 
where~$\widehat{K}_{r_m}$ is the $r_m$-th Legendre polynomial on~$(-1,1)$, which we scale such that~$\widehat K_{r_m}(-1)=(-1)^{r_m}$ (cf. the proof of Lemma~\ref{lm:SLC}), and~$F_m$ is the affine element mapping from~\eqref{eq:Fm}. Then,
\[
\NN{\P}_{L^\infty(I_m;X)}
\le\NN{U_{m-1}^+}_X\NN{\widehat K_{r_m}\circ F_m^{-1}}_{L^\infty(I_m)}
\le\NN{U_{m-1}^+}_X\NN{\widehat K_{r_m}}_{L^\infty(-1,1)}.
\]
Involving~\eqref{eq:Kinf} shows
\begin{equation}\label{eq:P}
\NN{\P}_{L^\infty(I_m;X)}\le \NN{U_{m-1}^+}_X.
\end{equation}
Furthermore, $\P$ is orthogonal to the space~$\mathcal{P}^{r_m-1}(I_m; X)$ (where~$\mathcal{P}^{-1}(I_m; X):=\{0\}\subset X$) with respect to the inner product in~$L^2(I_m;X)$. In particular, since~$U'\in\mathcal{P}^{r_m-1}(I_m;X)$, we have
\[
\int_{I_m}(U',\P)_X\dif t =0.
\]
So, noticing that~$\P(t_{m-1}^+)=U_{m-1}^+$, it follows that
\begin{align*}
\int_{I_m}(\chi(U),\P)_X\dif t
&=\int_{I_m}(\L_m^{r_m}(U^+_{m-1}),\P)_X\dif t\\
&=(U_{m-1}^+,\P(t_{m-1}^+))_X
=\NN{U_{m-1}^+}_X^2.
\end{align*}
Therefore, using H\"older's inequality and recalling~\eqref{eq:P}, we conclude that
\[
\NN{U_{m-1}^+}_X^2\le \NN{\chi(U)}_{L^1(I_m;X)}\NN{\P}_{L^\infty(I_m;X)}\le\NN{\chi(U)}_{L^1(I_m;X)}\NN{U_{m-1}^+}_X.
\]
Dividing by~$\NN{U_{m-1}^+}_X$ shows the desired bound.
\end{proof}

We are now ready to show Proposition~\ref{pr:chi}.

\begin{proof}[Proof of Proposition~\ref{pr:chi}]
%We begin by noticing that, for
Consider ~$U\in\mathcal{P}^{r_m}(I_m;X)$.
%%, we have
%\begin{align*}
%\int_{I_m}(\chi(U),U)_X\dif t
%&=\int_{I_m}(U'+\L_m^{r_m}(U_{m-1}^+),U)_X\dif t\\
%&=\frac12\int_{I_m}\frac{\mathsf{d}}{\mathsf{d}t}\NN{U}^2_X\dif t
%+\NN{U_{m-1}^+}^2_X\\
%&=\frac12\NN{U_{m-1}^+}^2_X+\frac12\NN{U_{m}^-}^2_X,
%\end{align*}
%and therefore
%\begin{align*}
%\NN{U_{m-1}^+}^2_X
%&\le 2\int_{I_m}(\chi(U),U)_X\dif t
%\le2 \int_{I_m}\NN{\chi(U)}_X\NN{U}_X\dif t\\
%&\le 2\NN{U}_{L^\infty(I_m;X)}\NN{\chi(U)}_{L^1(I_m;X)}.
%\end{align*}
%Applying H\"older's inequality, this leads to
%\[
%\NN{U_{m-1}^+}^2_X
%\le 2k_m^{1-\nicefrac{1}{p}}\NN{U}_{L^\infty(I_m;X)}\NN{\chi(U)}_{L^p(I_m;X)},
%\]
%and thus
%\[
%\NN{U_{m-1}^+}_X
%\le \sqrt{2}k_m^{\nicefrac12-\nicefrac{1}{2p}}\NN{U}_{L^\infty(I_m;X)}^{\nicefrac12}\NN{\chi(U)}_{L^p(I_m;X)}^{\nicefrac12}.
%\]
%Moreover, using Young's inequality, $2ab\le\delta a^2+\delta^{-1}b^2$, for any~$a,b\in\mathbb{R}$, $\delta>0$, we see that, for~$a=\sqrt{2}k_m^{\nicefrac12-\nicefrac{1}{2p}}\NN{\chi(U)}_{L^p(I_m;X)}^{\nicefrac12}$ and~$b=\NN{U}_{L^\infty(I_m;X)}^{\nicefrac12}$, it follows:
%\begin{equation}\label{eq:aux1}
%\NN{U_{m-1}^+}_X\le \delta k_m^{1-\nicefrac{1}{p}}\NN{\chi(U)}_{L^p(I_m;X)}+\frac{1}{2\delta}\NN{U}_{L^\infty(I_m;X)}.
%\end{equation}
%
We choose~$t^\star\in\overline I_m$ such that~$\NN{U(t^\star)}_X=\NN{U}_{L^\infty(I_m;X)}$. It  holds that
\[
U(t^\star) = \int_{t_{m-1}}^{t^\star}(U'+\L_m^{r_m}(U^+_{m-1}))\dif\tau
+U_{m-1}^+-\int_{t_{m-1}}^{t^\star}\L_m^{r_m}(U^+_{m-1})\dif\tau.
\]
Applying the triangle inequality as well as Bochner's Theorem, and recalling~\eqref{eq:auxSLC}, this implies that
\begin{align*}
 \NN{U}_{L^\infty(I_m;X)} 
&\le \int_{t_{m-1}}^{t^\star}\NN{\chi(U)}_X\dif\tau
+\NN{U_{m-1}^+-\int_{t_{m-1}}^{t^\star}\L_m^{r_m}(U^+_{m-1})\dif\tau}_X\\
&\le \NN{\chi(U)}_{L^1(I_m;X)}
+\NN{U_{m-1}^+}_X.
\end{align*}
Inserting the bound~\eqref{eq:BN} results in
\[
\NN{U}_{L^\infty(I_m;X)}  \le 2\NN{\chi(U)}_{L^1(I_m;X)},
\]
and applying H\"older's inequality completes the proof with~$C_\chi=2$.
%leads to
%\begin{align*}
%\NN{U}_{L^\infty(I_m;X)}  
%&\le 2k_m^{1-\nicefrac{1}{p}}\NN{\chi(U)}_{L^p(I_m;X)}
%%+\NN{U_{m-1}^+}_X.
%\end{align*}
%Inserting~\eqref{eq:aux1} yields
%\[
%\NN{U}_{L^\infty(I_m;X)} 
%\le k_m^{1-\nicefrac{1}{p}}(1+\delta)\NN{\chi(U)}_{L^p(I_m;X)}
%+\frac{1}{2\delta}\NN{U}_{L^\infty(I_m;X)},
%\]
%which can be transformed to (provided that~$\delta>\frac{1}{2}$)
%\[
%\NN{U}_{L^\infty(I_m;X)} 
%\le k_m^{1-\nicefrac{1}{p}}\eta(\delta)\NN{\chi(U)}_{L^p(I_m;X)},
%\]
%where $\eta(\delta):=(1-\frac{1}{2\delta})^{-1}(1+\delta)>0$,
%for~$\delta>\nicefrac12$. The function~$\eta$ has a local minimum at~$\delta^\star=\nicefrac12\left(1+\sqrt3\right)>\nicefrac12$, with value~$\eta(\delta^\star)=2+\sqrt{3}$. Hence, choosing~$\delta=\delta^\star$, this shows the bound~\eqref{eq:chi1} with~$C_\chi=2+\sqrt{3}$.
%It follows that~$\chi(U)\equiv 0$ if and only if~$U\equiv 0$ on~$I_m$, and therefore, $\chi$ is bijective.
\end{proof}

\begin{remark}
The proof of Proposition~\ref{pr:chi} reveals the upper bound~$C_\chi\le 2$. We emphasize, in particular, that the estimate~\eqref{eq:chi1} is uniform with respect to the local polynomial degree~$r_m\ge 0$ as~$r_m\to\infty$. 
\end{remark}

\begin{remark}
Upon setting~$U=\chi^{-1}(V)$ in~\eqref{eq:chi1}, we obtain
\begin{equation}\label{eq:chiinv}
\|\chi^{-1}(V)\|_{L^\infty(I_m;X)}
\le C_\chi k_m^{1-\nicefrac{1}{p}}\|V\|_{L^p(I_m;X)},
\end{equation}
for any~$V\in\mathcal{P}^{r_m}(I_m;X)$.
\end{remark}

\subsubsection{Fixed Point Formulation and Existence of Discrete dG Solutions}\label{sc:ex_dG}
As for the cG method we prove the existence of solutions of~\eqref{eq:dG} by means of a fixed point argument. For this purpose, we will derive a suitable fixed point formulation, and return to the case~$X=H_m$. Noticing the fact that~$\pi_m U_{m-1}^+=U_{m-1}^+\in H_m$, we observe that, on~$I_m$, there holds
\begin{align*}
U'+\L_m^{r_m}(\pi_m\jump{U}_{m-1})
&=(U-\pi_mU^-_{m-1})'+\L_m^{r_m}(U_{m-1}^+-\pi_mU^-_{m-1})\\
&=\chi(U-\pi_mU^-_{m-1}),
\end{align*}
and recalling~\eqref{eq:strong}, we can write
\[
\chi(U-\pi_mU^-_{m-1})= \Pi^{r_m}_m\F(U).
\]
Applying Proposition~\ref{pr:chi} we infer that
\[
U=\pi_mU^-_{m-1}+\chi^{-1}\left(\Pi^{r_m}_m\F(U)\right);
\]
this is the `dG-version' of the integral equation~\eqref{eq:inteq} for the cG method. Now, for given~$U_{m-1}^-$ (where as before~$U_0^-:=u_0$) we define the operator
\[
\H_m:\,\mathcal{P}^{r_m}(I_m;H_m)\to \mathcal{P}^{r_m}(I_m;H_m)
\]
by
\begin{equation}\label{eq:TdG}
\H_m(U):=\pi_mU_{m-1}^-+\chi^{-1}\left(\Pi^{r_m}_m\F(U)\right).
\end{equation}
Then, $U\in\mathcal{P}^{r_m}(I_m;H_m)$ solves~\eqref{eq:strong} if and only if~$U$ satisfies
\begin{equation}\label{eq:dGFP}
\H_m(U)=U.
\end{equation}

We will now prove the existence of solutions to the local $hp$-dG time stepping scheme~\eqref{eq:dG}: Consider~$\kappa_m, \theta_m>0$ (with~$t_{m-1}+\theta_m\le T$), and define the set
\[
Q_m=[t_{m-1},t_{m-1}+\theta_m]\times B_m,
\]
where
\begin{equation}\label{eq:dGB}
B_m=\left\{y\in H_m:\,\|y-\pi_mU^-_{m-1}\|_H\le\kappa_m\right\}. 
\end{equation}
Due to the continuity of~$\F$, its maximum on the \emph{compact} set~$Q_m$,
\begin{equation}\label{eq:KdG}
K_m:=\max_{(t,y)\in Q_m}\|\F(t,y)\|_H,
\end{equation}
exists. We choose
\begin{equation}\label{eq:kmdG}
0<k_m\le\min(\theta_m,C_\chi^{-1}K_m^{-1}\kappa_m),
\end{equation}
where~$C_\chi$ is the constant from~\eqref{eq:chi1},
and introduce
\begin{equation}\label{eq:MdG}
M^{\dG}_m:=\{Y\in\mathcal{P}^{r_m}(I_m;H_m):\,Y(t)\in B_{m}\,\forall t\in\overline I_m\},
\end{equation}
with~$I_m=(t_{m-1},t_m)$, $t_m=t_{m-1}+k_m$. 

Consider any~$U\in M^{\dG}_m$. From the definition of~$\H_m$ in~\eqref{eq:TdG}, and from~\eqref{eq:chiinv} with~$p=2$, we conclude that
\begin{align*}
\NN{\H_m(U)-\pi_mU_{m-1}^-}_{L^\infty(I_m;H)}
&\le C_\chi k_m^{\nicefrac{1}{2}}\NN{\Pi^{r_m}_m\F(U)}_{L^2(I_m;H)}.
\end{align*}
The boundedness of the $L^2$-projection on~$I_m$ (with constant~1) implies that
\begin{align*}
\NN{\H_m(U)-\pi_mU_{m-1}^-}_{L^\infty(I_m;H)}
\le C_\chi k_m^{\nicefrac{1}{2}}\NN{\F(U)}_{L^2(I_m;H)}.
\end{align*}
Then, we obtain
\begin{align*}
\NN{\H_m(U)-\pi_mU_{m-1}^-}_{L^\infty(I_m;H)}
\le C_\chi k_m\NN{\F(U)}_{L^\infty(I_m;H)}
&\le K_mC_\chi k_m\le\kappa_m,
\end{align*}
since~$U\in M^{\dG}_m$. This implies that~$\H_m(M^{\dG}_m)\subseteq M^{\dG}_m$. Then, employing Brower's fixed point theorem (based on the fact that~$M^{\dG}_m$ is convex and compact, and that~$\H_m$ is continuous), there exists a solution of~\eqref{eq:dGFP}, and therefore of~\eqref{eq:strong} and~\eqref{eq:dG}.

%%%%%%%%%%%%%%%%%%%%%%%%%%%%%%%%%%%%%%%%%%%%%%%%%%%%%%%%%%%%%%%%%%%%%%
%%%%%%%%%%%%%%%%%%%%%%%%%%%%%%%%%%%%%%%%%%%%%%%%%%%%%%%%%%%%%%%%%%%%%%

\section{Uniqueness of Galerkin Solutions}\label{sc:uniqueness}

In order to obtain unique Galerkin solutions on each time step we apply a contraction argument following the approach presented in~\cite{BandleBrunner:94}. To this end, we make the assumption that the nonlinearity~$\F$ is \emph{locally Lipschitz continuous}. Then, if the local time step~$k_m$ in the Galerkin time discretizations is chosen sufficiently small (again, independently of the local polynomial degree), we will show that the operators~$\HcG_m$ and~$\H_m$ from~\eqref{eq:TcG} and~\eqref{eq:TdG}, respectively, are contractive. This will lead to the following uniqueness result.

\begin{theorem}\label{thm:uniqueness}
Let $m\ge 1$, and $\kappa_m,\theta_m>0$ (with~$t_{m-1}+\theta_m\le T$). Furthermore, consider~$B_m$ from~\eqref{eq:cGB}, $K_m$ from~\eqref{eq:KcG}, and~$M_m^{\cG}$ from~\eqref{eq:McG} for the cG method~\eqref{eq:cG}, and the respective quantities for the dG scheme~\eqref{eq:dG} from~\eqref{eq:dGB}, \eqref{eq:KdG}, and~\eqref{eq:MdG}. Moreover, for each of the two schemes, we suppose that there exists a constant~$0\le L_{\F}(B_m)<\infty$ such that the local Lipschitz continuity condition,
\begin{equation}\label{eq:L}
\NN{\F(t,u)-\F(t,v)}_H\le L_{\F}(B_m)\NN{u-v}_H\qquad\forall t\in\overline{I}_m,\,\forall u,v\in B_m,
\end{equation}
holds. In addition, for a parameter~$\varrho\in(0,1)$, suppose that
\begin{equation}\label{eq:kunique}
k_m\le\min\left(\theta_m,c^{-1}K_m^{-1}\kappa_m,\varrho c^{-1}L_{\F}(B_m)^{-1}\right),
\end{equation}
where
\begin{equation}\label{eq:c}
c=\begin{cases}
1&\text{for cG time stepping},\\
C_\chi&\text{for dG time stepping},
\end{cases}
\end{equation}
with~$C_\chi$ being the constant from~\eqref{eq:chi1}. Then, the cG and dG methods on~$I_m$ each possess unique solutions~$U_{\cG}$ and~$U_{\dG}$ in~$M_m^{\cG}$ and~$M_m^{\dG}$, respectively.
\end{theorem}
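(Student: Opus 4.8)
The plan is to show that, under the step-size condition~\eqref{eq:kunique}, the fixed point maps~$\HcG_m$ from~\eqref{eq:TcG} and~$\H_m$ from~\eqref{eq:TdG} are contractions on the sets~$M_m^{\cG}$ and~$M_m^{\dG}$, respectively, with respect to the~$L^\infty(I_m;H)$-metric, and then to invoke Banach's fixed point theorem. Since each of these sets is a closed, bounded subset of the finite-dimensional space~$\mathcal{P}^{r_m+1}(I_m;H_m)$ (resp.~$\mathcal{P}^{r_m}(I_m;H_m)$), it is a complete metric space under the~$L^\infty$-metric, so Banach's theorem applies once contractivity is established. I would first observe that the bound~\eqref{eq:kunique} implies in particular $k_m\le\min(\theta_m,c^{-1}K_m^{-1}\kappa_m)$, which is exactly the condition used in Section~\ref{existdisc} (with $c=1$ for cG and $c=C_\chi$ for dG); hence the self-mapping properties $\HcG_m(M_m^{\cG})\subseteq M_m^{\cG}$ and $\H_m(M_m^{\dG})\subseteq M_m^{\dG}$ are already available from the proof of Theorem~\ref{thm:cGdG}, and only the contraction estimate remains.

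For the cG scheme, given $U,W\in M_m^{\cG}$, I would subtract the two integral representations~\eqref{eq:TcG} to get $\HcG_m(U)(t)-\HcG_m(W)(t)=\int_{t_{m-1}}^t\Pi^{r_m}_m(\F(U)-\F(W))\dif\tau$, and then estimate exactly as in Section~\ref{sc:ex_cG}: choose $t^\star\in\overline I_m$ realizing the $L^\infty$-norm, apply Bochner's theorem and Cauchy--Schwarz to produce the factor $k_m^{\nicefrac12}$, use the $L^2$-stability (with constant $1$) of $\Pi^{r_m}_m$, bound the $L^2(I_m;H)$-norm by $k_m^{\nicefrac12}$ times the $L^\infty(I_m;H)$-norm, and finally invoke the local Lipschitz bound~\eqref{eq:L} (legitimate since $U(t),W(t)\in B_m$). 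This yields $\NN{\HcG_m(U)-\HcG_m(W)}_{L^\infty(I_m;H)}\le k_m L_{\F}(B_m)\NN{U-W}_{L^\infty(I_m;H)}$, and~\eqref{eq:kunique} with $c=1$ forces the prefactor to be at most $\varrho<1$. For the dG scheme the argument is identical up to the constant~$C_\chi$: from~\eqref{eq:TdG} one has $\H_m(U)-\H_m(W)=\chi^{-1}(\Pi^{r_m}_m(\F(U)-\F(W)))$, so applying the bound~\eqref{eq:chiinv} with $p=2$ (uniform in~$r_m$ by Proposition~\ref{pr:chi}) together with the $L^2$-stability of $\Pi^{r_m}_m$, the interval H\"older estimate, and~\eqref{eq:L}, gives $\NN{\H_m(U)-\H_m(W)}_{L^\infty(I_m;H)}\le C_\chi k_m L_{\F}(B_m)\NN{U-W}_{L^\infty(I_m;H)}$; by~\eqref{eq:kunique} with $c=C_\chi$ the prefactor is again $\le\varrho<1$. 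In both cases Banach's fixed point theorem yields a unique fixed point, hence a unique solution of~\eqref{eq:cG} (resp.~\eqref{eq:dG}) lying in $M_m^{\cG}$ (resp.~$M_m^{\dG}$); the degenerate case $L_{\F}(B_m)=0$ is trivial since then the maps are constant.

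Honestly, I do not anticipate a serious obstacle: the two ingredients needed — the self-mapping property and the contraction property — run in parallel and reuse verbatim the computations already carried out for Theorem~\ref{thm:cGdG}, the only new element being the extra term in~\eqref{eq:kunique} that drives the Lipschitz prefactor strictly below~$1$. The points that deserve a word of care are the completeness of the metric spaces $M_m^{\cG},M_m^{\dG}$ (finite-dimensionality together with closedness) and, in the dG case, the fact that the contraction constant inherits the $r_m$-independence of $C_\chi$ from Proposition~\ref{pr:chi}, which is precisely what keeps the admissible step size independent of the local polynomial degree. Finally I would stress that the uniqueness obtained is \emph{local}, i.e. within the prescribed balls $B_m$; a genuinely global uniqueness statement is not claimed here and would require stronger hypotheses on~$\F$.
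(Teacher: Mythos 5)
Your proposal is correct and follows essentially the same route as the paper's proof: reuse the self-mapping property from the existence argument, derive the contraction estimate $\NN{\HcG_m(U_1)-\HcG_m(U_2)}_{L^\infty(I_m;H)}\le c\,k_mL_{\F}(B_m)\NN{U_1-U_2}_{L^\infty(I_m;H)}$ via the identical chain of Bochner/Cauchy--Schwarz/projection-stability bounds (with~\eqref{eq:chiinv} supplying the factor~$C_\chi$ in the dG case), and conclude with Banach's fixed point theorem. Your added remarks on completeness of~$M_m^{\cG}$, $M_m^{\dG}$, the degenerate case~$L_{\F}(B_m)=0$, and the local nature of the uniqueness are correct refinements but do not change the argument.
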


\begin{proof} 
We treat the cG and dG cases separately.

\emph{Uniqueness of cG solution:} From Section~\ref{sc:ex_cG} we recall the following fact: For given~$\kappa_m,\theta_m>0$ (with~$t_{m-1}+\theta_m\le T$), and for~$K_m$ from~\eqref{eq:KcG}, choosing the local time step~$k_m$ to be bounded by~$k_m\le\min(\theta_m,K_m^{-1}\kappa_m)$ guarantees the self-mapping property~$\HcG_m(M^{\cG}_m)\subseteq M^{\cG}_m$, where~$\HcG_m$ is the cG operator from~\eqref{eq:TcG}. Furthermore, for~$U_1,U_2\in M^{\cG}_m$ we have
\begin{align*}
\big\|\HcG_m(U_1)-\HcG_m(U_2)\big\|_{L^\infty(I_m;H)}
&=\NN{\int_{t_{m-1}}^t\Pi^{r_m}_m\left(\F(U_1)-\F(U_2)\right)\dd\tau}_{L^\infty(I_m;H)}\\
&\le\int_{I_m}\NN{\Pi^{r_m}_m\left(\F(U_1)-\F(U_2)\right)}_{H}\dd\tau\\
&\le k_m^{\nicefrac12}\NN{\Pi^{r_m}_m\left(\F(U_1)-\F(U_2)\right)}_{L^2(I_m;H)}\\
&\le k_m^{\nicefrac12}\NN{\F(U_1)-\F(U_2)}_{L^2(I_m;H)}\\
&\le k_m\NN{\F(U_1)-\F(U_2)}_{L^\infty(I_m;H)}.
\end{align*}
Now involving the Lipschitz condition~\eqref{eq:L} on~$B_m$ from~\eqref{eq:cGB}, we infer that
\[
\NN{\HcG_m(U_1)-\HcG_m(U_2)}_{L^\infty(I_m;H)}
\le L_{\F}(B_m)k_m\NN{U_1-U_2}_{L^\infty(I_m;H)},
\]
for all $U_1,U_2\in M^{\cG}_m$. This implies that, for~$k_m< L_{\F}(B_m)^{-1}$, the operator~$\HcG_m$ is a contraction on~$M^{\cG}_m$. Thus, by the Banach fixed point theorem, the equation~\eqref{eq:cGfix} has a unique solution in~$M^{\cG}_m$.

\emph{Uniqueness of dG solution:} In the case of the dG scheme we proceed in a similar way as for the cG time stepping method. For~$\kappa_m,\theta_m>0$ (with~$t_{m-1}+\theta_m\le T$), and for~$K_m$ from~\eqref{eq:KdG}, choosing the local time step~$k_m$ to be bounded by~$k_m\le\min(\theta_m,C_\chi^{-1}K_m^{-1}\kappa_m)$ ensures that $\H_m(M^{\dG}_m)\subseteq M^{\dG}_m$, where~$C_\chi$ is the constant from~\eqref{eq:chi1}, and~$\H_m$ is the operator defined in~\eqref{eq:TdG}; cf.~Section~\ref{sc:ex_dG}. In addition, for~$U_1,U_2\in M^{\dG}_m$ there holds that
\begin{align*}
\NN{\H_m(U_1)-\H_m(U_2)}_{L^\infty(I_m;H)}
&=\NN{\chi^{-1}\left(\Pi^{r_m}_m\F(U_1)-\Pi^{r_m}_m\F(U_2)\right)}_{L^\infty(I_m;H)}.
\end{align*}
Using~\eqref{eq:chiinv}, we deduce that
\begin{align*}
\NN{\H_m(U_1)-\H_m(U_2)}_{L^\infty(I_m;H)}
&\le C_\chi k_m^{\nicefrac12}\NN{\Pi^{r_m}_m\left(\F(U_1)-\F(U_2)\right)}_{L^2(I_m;H)}\\
&\le C_\chi k_m^{\nicefrac12}\NN{\F(U_1)-\F(U_2)}_{L^2(I_m;H)}\\
&\le C_\chi k_m\NN{\F(U_1)-\F(U_2)}_{L^\infty(I_m;H)}.
\end{align*}
By means of~\eqref{eq:L} we derive the bound
\[
\NN{\H_m(U_1)-\H_m(U_2)}_{L^\infty(I_m;H)}
\le C_\chi L_{\F}(B_m)k_m\NN{U_1-U_2}_{L^\infty(I_m;H)},
\]
for all $U_1,U_2\in M^{\dG}_m$, where the ball~$B_m$ is defined in~\eqref{eq:dGB}. Hence, for $k_m<C_\chi^{-1} L_{\F}(B_m)^{-1}$, the mapping~$\H_m:\, M^{\dG}_m\to M^{\dG}_m$ is a contraction. This implies that the equation~\eqref{eq:dGFP} has a unique solution~$U\in M^{\dG}_m$.
\end{proof}

\begin{remark}\label{rm:FP}
The above Theorem~\ref{thm:uniqueness} shows that the cG and dG operators in~\eqref{eq:TcG} and~\eqref{eq:TdG} are contractions in each time step, and thus, have unique fixed points in~$M_m^{\cG}$ and~$M_m^{\dG}$, respectively. In particular, the corresponding fixed point iterations converge. For instance, in the case of the cG time stepping scheme, for~$m\ge 1$, starting from an initial guess~$U^{(0)}\in M^{\cG}_m$ (which can be chosen, for example, to be the constant function~$U^{(0)}(t)=\pi_mU_{m-1}$, $t\in I_m$), the iteration
\[
U^{(\ell+1)}=\HcG_m(U^{(\ell)}),\qquad \ell\ge 1,
\]
will tend to the unique solution~$U|_{I_m}\in M_m^{\cG}$ of~\eqref{eq:cG}. Similarly, for the dG scheme, for $m\ge1$, and an initial guess~$U^{(0)}\in M_m^{\dG}$ (for example, $U^{(0)}(t)=\pi_mU_{m-1}^-$, $t\in I_m$), the iteration
\[
U^{(\ell+1)}=\H_m(U^{(\ell)}),\qquad \ell\ge 1,
\]
converges to the unique solution~$U|_{I_m}\in M_m^{\dG}$ of~\eqref{eq:dG}. 
%In practice, a possible stopping criterion is to terminate the fixed point iteration on the $m$-th time step if
%\begin{equation}\label{eq:tol}
%\norm{(U_{m}^-)^{\ell}-(U_{m}^-)^{\ell-1}}_H\le \tol\cdot\norm{(U_{m}^-)^{\ell}}_H,
%\end{equation}
%where~$\tol>0$ is a prescribed tolerance.
\end{remark}

%%%%%%%%%%%%%%%%%%%%%%%%%%%%%%%%%%%%%%%%%%%%%%%%%%%%%%%%%%%%%%%%%%%%%%
%%%%%%%%%%%%%%%%%%%%%%%%%%%%%%%%%%%%%%%%%%%%%%%%%%%%%%%%%%%%%%%%%%%%%%

\section{Application to Finite-Time Blow-Up Problems}\label{sc:appl}

In this section we will discuss the existence and uniqueness Theorem~\ref{thm:uniqueness} in the context of nonlinearities~$\F$ that grow algebraically with respect to~$u$, with a power larger than~1. We will show that both the exact solution~$u$ of~\eqref{eq:1} as well as the cG and dG solutions blow up in finite time. In addition, we will provide a time step selection algorithm, and prove a convergence result. In order to keep the technical matters within a reasonable scope, we assume that~$H$ is finite dimensional, and that~$H=H_1=H_2=\ldots=H_m=\ldots$ holds for any~$m\ge 1$. 

\subsection{Algebraic Growth Nonlinearities} 
We consider nonlinearities~$\F$ which feature the following {\em algebraic growth condition}: Suppose that there exist constants~$\alpha,\delta>0$, $\beta>1$, and~$\cF\ge 0$ such that
\begin{equation}\label{eq:growth}
\|\F(t,u)\|_H\le \alpha\|u\|_H^\beta\qquad\text{ and }\qquad
(\F(t,u),u)_H\ge\delta\|u\|_H^{1+\beta},
\end{equation}
for all~$u\in H$ which satisfy~$\|u\|_H\ge\cF$, and for any~$t\in[0,\infty)$ (or for any~$t\in[0,T]$, with sufficiently large~$T>0$). We note that such problems exhibit a blow-up in some finite time~$T_\infty<\infty$. Indeed, let~$u$ solve~\eqref{eq:1}, and suppose that~$\|u_0\|_H>\cF$ in~\eqref{eq:1}. Then, under the conditions~\eqref{eq:growth}, it is easy to see that~$\|u(t)\|_H$ is non-decreasing with respect to~$t$, and thus,
\begin{equation}\label{eq:ODE}
\frac{\dd}{\dd t}\|u(t)\|_H^2=2(u'(t),u(t))_H=2(\F(t,u(t)),u(t))_H\ge2\delta\left(\|u(t)\|_H^2\right)^{\nicefrac{(1+\beta)}{2}}.
\end{equation}
Hence, 
\[
\frac{1}{1-\beta}\frac{\dd}{\dd t}\left[\left(\|u(t)\|_H^2\right)^{\nicefrac{(1-\beta)}{2}}\right]\ge\delta.
\]
Integrating from~$0$ to some~$t>0$ shows that
\[
\|u_0\|_H^{1-\beta}-\|u(t)\|_H^{1-\beta}\ge(\beta-1)\delta t,
\]
and therefore,
\[
t\le\frac{\|u_0\|_H^{1-\beta}}{(\beta-1)\delta}=:\overline{T}_\infty.
\]
It follows that $\overline{T}_\infty$ is an upper bound for the blow-up time.

\subsection{Discrete Blow-Up}
Provided that the properties~\eqref{eq:growth} hold true, the goal of this section is to show that the cG and dG time stepping methods yield solutions which blow-up in finite time. To this end, let us assume, in addition to~\eqref{eq:growth}, that the local Lipschitz property
\begin{equation}\label{eq:Lip}
\NN{\F(t,u)-\F(t,v)}_H\le \gamma\max(\NN{u}_H,\NN{v}_H)^{\beta-1}\NN{u-v}_H
\end{equation}
holds true whenever $\|u\|_H,\|v\|_H\ge\cF$, cf.~\eqref{eq:growth}, and for all~$t\in[0,\infty)$ (or for any~$t\in[0,T]$ with sufficiently large~$T>0$), with a uniform constant~$\gamma\ge 0$.

In the following elaborations, the function
\begin{equation}\label{eq:Psi}
\Psi:\,\left[0,\nicefrac{\gamma}{\alpha}\right)\to\mathbb{R},\qquad
\varrho\mapsto\Psi(\varrho)=\frac{\delta(\gamma-\varrho\alpha)^\beta-\varrho\alpha\gamma^\beta}{\gamma-\varrho\alpha}
\end{equation}
will play an important role; here, $\alpha$, $\beta$, $\delta$, and~$\gamma$ are the constants from~\eqref{eq:growth} and~\eqref{eq:Lip}, respectively. We note that~$\Psi$ is decreasing, and that~$\Psi(0)=\delta\gamma^{\beta-1}>0$, and~$\lim_{\varrho\nearrow\nicefrac{\gamma}{\alpha}}\Psi(\varrho)=-\infty$. Hence, by continuity there exists exactly one zero~$\overline\varrho$ of~$\Psi$ in the interval~$\left[0,\nicefrac{\gamma}{\alpha}\right)$.

\begin{proposition}\label{pr:blowup}
Suppose that the conditions~\eqref{eq:growth} and~\eqref{eq:Lip} hold, and that the initial value~$u_0\in H$ from~\eqref{eq:1} satisfies~$\|u_0\|_H>\cF$. Furthermore, let~$\varrho_0$ be a fixed constant with~$0<\varrho_0<\min(1,\overline\varrho)$, where~$\overline\varrho$ is the unique zero of~$\Psi$ from~\eqref{eq:Psi} in~$[0,\nicefrac{\gamma}{\alpha})$. For any given~$\varrho$ with
\begin{equation}\label{eq:rho}
0<\varrho\le\min\left(\varrho_0,\frac{\alpha^{-1}\gamma}{1+\left(1-\cF\|u_0\|_H^{-1}\right)^{-1}}\right), 
\end{equation}
choose the time steps to be
\begin{equation}\label{eq:km1}
k_m(\varrho):=c^{-1}\gamma^{-\beta}\varrho(\gamma-\varrho\alpha)^{\beta-1}\|U_{m-1}^-\|_H^{1-\beta},\qquad m=1,2,3,\ldots,
\end{equation}
where~$U_{m-1}^-$, $m\ge 1$, signifies the left-sided value of the cG or dG solution~$U$ from~\eqref{eq:cG} or~\eqref{eq:dG}, respectively, at the nodal point~$t_{m-1}$ (with~$U_{m-1}^-=U_{m-1}$ for the cG scheme, and~$U_0^-:=u_0$). Then, there holds:
\begin{enumerate}[(i)]
\item For any~$m\ge 1$, the cG and dG solutions resulting from~\eqref{eq:cG} and~\eqref{eq:dG} exist and are unique in $M^{\cG}_m$ from~\eqref{eq:McG} and~$M_m^{\dG}$ from~\eqref{eq:MdG}, respectively, with~$\kappa_m=\varrho\alpha(\gamma-\varrho\alpha)^{-1}\|U_{m-1}^-\|_H$, for any polynomial degree distribution.
\item Both the cG and the dG solutions blow-up at finite times~$\widetilde{T}^{\cG}_\infty(\varrho)$ and~$\widetilde{T}^{\dG}_\infty(\varrho)$, respectively.
\end{enumerate}
The constants~$\alpha,\beta,\delta$, and~$\gamma$ were introduced in~\eqref{eq:growth} and~\eqref{eq:Lip}, respectively, and $c$ is defined in~\eqref{eq:c}.
\end{proposition}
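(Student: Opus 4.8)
The plan is to prove part (i) by showing that the time step $k_m(\varrho)$ defined in \eqref{eq:km1} satisfies the hypotheses of Theorem~\ref{thm:uniqueness} on every interval, with the particular choice $\kappa_m=\varrho\alpha(\gamma-\varrho\alpha)^{-1}\|U_{m-1}^-\|_H$ and a uniform $\varrho$; then part (ii) follows by comparing the discrete dynamics of $\|U_m^-\|_H$ with a divergent series/ODE along the lines of the continuous blow-up computation leading to $\overline T_\infty$, while using the explicit form of $k_m(\varrho)$. First I would set up an induction over $m$: assume $\|U_{m-1}^-\|_H\ge\|u_0\|_H>\cF$ (which holds for $m=1$), and show that the conditions \eqref{eq:L} and \eqref{eq:kunique} of Theorem~\ref{thm:uniqueness} are met on $I_m$. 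The local Lipschitz constant on $B_m$ follows from \eqref{eq:Lip}: since every $y\in B_m$ satisfies $\|y\|_H\le\|U_{m-1}^-\|_H+\kappa_m=\gamma(\gamma-\varrho\alpha)^{-1}\|U_{m-1}^-\|_H$, one gets $L_\F(B_m)\le\gamma\,(\gamma(\gamma-\varrho\alpha)^{-1}\|U_{m-1}^-\|_H)^{\beta-1}$. Similarly, from \eqref{eq:growth} the maximum $K_m$ over $Q_m$ is bounded by $K_m\le\alpha(\gamma(\gamma-\varrho\alpha)^{-1}\|U_{m-1}^-\|_H)^\beta$ (using $\beta>1$). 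Here I must also check $\|y\|_H\ge\cF$ for $y\in B_m$ so that \eqref{eq:growth} and \eqref{eq:Lip} actually apply — this is where the second term in the minimum in \eqref{eq:rho} enters, since it guarantees $\kappa_m=\varrho\alpha(\gamma-\varrho\alpha)^{-1}\|U_{m-1}^-\|_H\le\|U_{m-1}^-\|_H-\cF$, hence $\|y\|_H\ge\|U_{m-1}^-\|_H-\kappa_m\ge\cF$.

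With these bounds, the verification of \eqref{eq:kunique} becomes a direct computation. The second slot, $c^{-1}K_m^{-1}\kappa_m$, evaluates (using the bounds on $K_m$ and the value of $\kappa_m$) to at least $c^{-1}\gamma^{-\beta}\varrho(\gamma-\varrho\alpha)^{\beta-1}\|U_{m-1}^-\|_H^{1-\beta}=k_m(\varrho)$, so $k_m(\varrho)$ respects that constraint. The third slot, $\varrho c^{-1}L_\F(B_m)^{-1}$, evaluates to at least $\varrho c^{-1}\gamma^{-1}(\gamma-\varrho\alpha)^{\beta-1}\gamma^{1-\beta}\|U_{m-1}^-\|_H^{1-\beta}=c^{-1}\gamma^{-\beta}\varrho(\gamma-\varrho\alpha)^{\beta-1}\|U_{m-1}^-\|_H^{1-\beta}=k_m(\varrho)$; note the two expressions coincide by design, which is the reason for the particular algebraic form of \eqref{eq:km1}. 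Finally one checks $k_m(\varrho)\le\theta_m$, which holds provided $\theta_m$ is chosen not too small — since $\theta_m$ is a free parameter (subject only to $t_{m-1}+\theta_m\le T$, with $T$ as large as needed), this is harmless. Since $\varrho<1$ by \eqref{eq:rho} ($\varrho\le\varrho_0<1$), Theorem~\ref{thm:uniqueness} applies and yields existence and uniqueness of $U|_{I_m}$ in $M_m^{\cG}$ (resp.\ $M_m^{\dG}$). To close the induction I must show $\|U_m^-\|_H\ge\|U_{m-1}^-\|_H$ (monotonicity), which is the real crux and is also exactly what drives part (ii); this requires a discrete energy estimate.

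For the monotonicity and blow-up (part (ii)), the plan is to test the strong cG/dG equation against $U$ itself on $I_m$ (or use the fixed-point representation) to derive a lower bound on the growth of $\|U_m^-\|_H^2-\|U_{m-1}^-\|_H^2$ per step, mimicking \eqref{eq:ODE}. Concretely, for the dG scheme one uses \eqref{eq:dG} with $V=U$; the jump term contributes $\tfrac12\|U_m^-\|_H^2-\tfrac12\|U_{m-1}^-\|_H^2+\tfrac12\|\jump{U}_{m-1}\|_H^2$ (standard dG algebra), and the right-hand side $\int_{I_m}(\F(U),U)_H\,\dd t$ is bounded below using the coercivity half of \eqref{eq:growth}; for cG one argues similarly with $V=U'$ or by testing the integral equation. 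One then needs that $\|U(t)\|_H\ge\cF$ throughout $I_m$ — which follows from $U(t)\in B_m$ together with the lower bound on $\|y\|_H$ established above — so \eqref{eq:growth} is available pointwise. This gives $\|U_m^-\|_H\ge\|U_{m-1}^-\|_H$ (completing the induction) and, more quantitatively, a recursion of the form $\|U_m^-\|_H^{1-\beta}-\|U_{m-1}^-\|_H^{1-\beta}\le -c_0\,\delta\,k_m(\varrho)\,(\text{something})$; summing and inserting the explicit $k_m(\varrho)\propto\|U_{m-1}^-\|_H^{1-\beta}$ shows that $\|U_{m-1}^-\|_H^{1-\beta}$ decreases by at least a fixed geometric factor (or at least a fixed positive amount relative to its size) at each step, forcing $\|U_{m-1}^-\|_H\to\infty$ while $\sum_m k_m(\varrho)$ converges — this is where the sign condition $\Psi(\varrho_0)>0$, equivalently $\varrho_0<\overline\varrho$, is used to guarantee that the net effect of one step is genuinely contractive on $\|U_{m-1}^-\|_H^{1-\beta}$ and not swamped by the error terms coming from the jump/projection. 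Setting $\widetilde T_\infty^{\cG/\dG}(\varrho):=\sum_{m\ge1}k_m(\varrho)<\infty$ then gives the finite blow-up time.

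The main obstacle I anticipate is the discrete energy estimate that yields the quantitative recursion for $\|U_{m-1}^-\|_H^{1-\beta}$: unlike the clean continuous computation \eqref{eq:ODE}, the discrete version has to absorb the jump terms (dG) or the mismatch between $U'$ and the test space (cG), control the $L^2$-projection $\Pi_m^{r_m}$ acting on $\F(U)$, and do so with constants independent of $r_m$. Tracking these error terms carefully and checking that the choice $\varrho<\overline\varrho$ (via $\Psi(\varrho)>0$) exactly compensates them — so that the recursion still forces divergence of $\|U_{m-1}^-\|_H$ together with convergence of $\sum k_m$ — is the delicate heart of the argument; everything else is bookkeeping with the algebraic identities built into \eqref{eq:km1}, \eqref{eq:Psi}, and \eqref{eq:rho}.
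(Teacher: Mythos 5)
Your treatment of part (i) is correct and coincides with the paper's: the same induction hypothesis $\NN{U_{m-1}^-}_H\ge\NN{u_0}_H>\cF$, the same use of the second term in the minimum in~\eqref{eq:rho} to guarantee $\NN{y}_H\ge\cF$ on $B_m$, and the same algebraic verification that $k_m(\varrho)$ saturates both the $c^{-1}K_m^{-1}\kappa_m$ and the $\varrho c^{-1}L_{\F}(B_m)^{-1}$ constraints of~\eqref{eq:kunique} (your $\gamma(\gamma-\varrho\alpha)^{-1}$ is exactly the paper's $1+\eta_m$). You also correctly identify that the induction can only be closed via a monotonicity estimate for $\NN{U_m^-}_H$, which is simultaneously the engine of part (ii).

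It is precisely there that your proposal has a genuine gap. You defer the discrete growth estimate as an ``anticipated obstacle'' and the route you sketch for it is problematic. For the cG scheme, $V=U$ is not an admissible test function ($U\in\mathcal{P}^{r_m+1}$, test space $\mathcal{P}^{r_m}$), and $V=U'$ yields $\int_{I_m}\NN{U'}_H^2\dif t=\int_{I_m}(\F(U),U')_H\dif t$, which carries no usable information about $\NN{U_m}_H$ versus $\NN{U_{m-1}}_H$. For the dG scheme, testing with $V=U$ produces the term $-\tfrac12\NN{\jump{U}_{m-1}}_H^2$ with the wrong sign; it can in principle be absorbed (it is $O(\varrho^2)\NN{U_{m-1}^-}_H^2$ against a gain of $O(\varrho)\NN{U_{m-1}^-}_H^2$), but this requires an additional smallness threshold on $\varrho$ that is \emph{not} the one encoded by $\Psi$, so your claim that $\Psi(\varrho)>0$ is what compensates the jump/projection errors is not accurate. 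The paper's device is simpler and works uniformly for both schemes: test with the \emph{constant} function $V(t)=U_{m-1}^-$ (admissible in both test spaces), which gives $(U_m^-,U_{m-1}^-)_H=\NN{U_{m-1}^-}_H^2+\int_{I_m}(\F(U),U_{m-1}^-)_H\dif t$ with no quadratic jump term at all. Splitting $\F(U)=\F(U_{m-1}^-)+(\F(U)-\F(U_{m-1}^-))$, using coercivity on the first part and the Lipschitz bound $L_\F(B_m)\kappa_m$ on the second, and dividing by $\NN{U_{m-1}^-}_H$ yields $\NN{U_m^-}_H\ge\NN{U_{m-1}^-}_H+k_m(\delta-\gamma\eta_m(1+\eta_m)^{\beta-1})\NN{U_{m-1}^-}_H^{\beta}$; inserting~\eqref{eq:km1} and~\eqref{eq:eta} turns the bracket into exactly $\gamma^{-\beta}\Psi(\varrho)$, whence $\NN{U_m^-}_H\ge(1+C_0\varrho)\NN{U_{m-1}^-}_H$ with $C_0=c^{-1}\gamma^{-\beta}\Psi(\varrho_0)>0$. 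This is the only place $\Psi$ enters: it balances the coercivity gain against the \emph{Lipschitz deviation} of $\F(U)$ from $\F(U_{m-1}^-)$ over the step, not against jump or $L^2$-projection errors. The geometric growth then closes the induction and makes $\sum_m k_m(\varrho)\propto\sum_m\NN{U_{m-1}^-}_H^{1-\beta}$ a convergent geometric series, giving the finite discrete blow-up time. Without this (or an equally explicit) computation, parts (i) and (ii) both remain unproved, since the induction hypothesis is never propagated.
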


\begin{proof}
We focus on the dG method only; the proof for the cG method can be done verbatim.
Let~$m\ge 1$, and suppose that the dG solution on the first~$m-1$ time steps is well-defined, and that
\begin{equation}\label{eq:in0}
\|U_{m-1}^-\|_H\ge\|u_0\|_H>c_\F\ge 0. 
\end{equation}
Then, with~$\kappa_m=\eta_m\|U_{m-1}^-\|_H$, where
\begin{equation}\label{eq:eta}
\eta_m=\varrho\alpha(\gamma-\varrho\alpha)^{-1},
\end{equation} 
we see by means of~\eqref{eq:rho} that $0<\eta_m\le1-\cF\|u_0\|_H^{-1}$.
Therefore, for any~$y\in B_m:=\{y\in H:\,\|y-U_{m-1}^-\|_H\le\kappa_m\}$, it follows that
\begin{align*}
\|y\|_H
&\ge\|U_{m-1}^-\|_H-\|y-U_{m-1}^-\|_H\ge\|U_{m-1}^-\|_H-\kappa_m\ge(1-\eta_m)\|U_{m-1}^-\|_H\\
&\ge(1-\eta_m)\|u_0\|_H\ge\cF.
\end{align*}
Consequently, in view of the growth condition~\eqref{eq:growth}, there holds
\begin{align*}
K_m:&=\max_{(t,y)\in Q_m}\|\F(t,y)\|_H\\
&\le\alpha\|y\|^\beta_H
\le\alpha\left(\|U_{m-1}^-\|_H+\kappa_m\right)^\beta
=\alpha\left(1+\eta_m\right)^\beta\|U_{m-1}^-\|_H^\beta,
\end{align*}
with~$Q_m=I_m\times B_m$, where~$I_m=[t_{m-1},t_{m-1}+\theta_m]$, and~$\theta_m:=k_m(\varrho)$. Hence,
\[
k_m(\varrho)=c^{-1}\alpha^{-1}\eta_m\left(1+\eta_m\right)^{-\beta}\|U_{m-1}^-\|_H^{1-\beta}\le
c^{-1}K_m^{-1}\kappa_m,
\] 
and revisiting the existence proof in Section~\ref{sc:ex_dG} (in particular, see~\eqref{eq:kmdG}), we infer that there is a dG solution in~$M^{\dG}_m$.  Furthermore, we bound the Lipschitz constant~$L_\F(B_m)$ appearing in~\eqref{eq:L} by means of~\eqref{eq:Lip}: For any~$u,v\in B_m$ we have~$\|u\|_H,\|v\|_H\ge\cF$ as shown before, and $\max(\|u\|_H,\|v\|_H)\le\|U_{m-1}^-\|_H+\kappa_m$. Thus,
\begin{equation}\label{eq:Lip1}
\begin{split}
L_\F(B_m)&\le \gamma(\kappa_m+\|U_{m-1}^-\|_H)^{\beta-1}
\le\gamma(1+\eta_m)^{\beta-1}\|U_{m-1}^-\|_H^{\beta-1},
\end{split}
\end{equation}
which implies that
\[
k_m(\varrho)=\varrho c^{-1}\gamma^{-1}(1+\eta_m)^{1-\beta}\|U_{m-1}^-\|_H^{1-\beta}
\le\varrho c^{-1}L_\F(B_m)^{-1}.
\]
Then, with reference to~\eqref{eq:kunique}, the uniqueness of a dG solution in~$M^{\dG}_m$ follows immediately. 

Next, consider the dG solution~$U|_{I_m}\in\mathcal{P}^{r_m}(I_m;H)$. Using~\eqref{eq:dG} with the constant test function~$V(t)=U_{m-1}^-$, $t\in I_m$, we have that
\[
(U_m^-,U^-_{m-1})_H=\|U_{m-1}^-\|_H^2+\int_{I_m}(\F(U),U_{m-1}^-)_H\dif t.
\]
Recalling~\eqref{eq:in0}, and employing~\eqref{eq:growth}, we obtain
\begin{align*}
\|U_m^-\|&_H\|U_{m-1}^-\|_H\\
%&\ge \frac12\|U_{m-1}^-\|_H^2+\int_{I_m}(\F(U),U_{m-1}^-)_H\dif t\\
&\ge \|U_{m-1}^-\|_H^2+k_m(\F(U_{m-1}^-),U_{m-1}^-)_H
+\int_{I_m}(\F(U)-\F(U_{m-1}^-),U_{m-1}^-)_H\dif t\\
&\ge \|U_{m-1}^-\|_H^2+k_m\delta\|U_{m-1}^-\|^{1+\beta}_H-\|U_{m-1}^-\|_H\int_{I_m}\|\F(U)-\F(U_{m-1}^-)\|_H\dif t.
\end{align*}
Furthermore, dividing by~$\|U_{m-1}^-\|_H>0$, it holds that
\[
\|U_m^-\|_H
\ge \|U_{m-1}^-\|_H+k_m\delta\|U_{m-1}^-\|^{\beta}_H-\int_{I_m}\|\F(U)-\F(U_{m-1}^-)\|_H\dif t.
\]
Reviewing the proof of Theorem~\ref{thm:cGdG} in Section~\ref{sc:ex_dG}, we observe that~$U(t)\in B_m$ for all~$t\in I_m$. Therefore, using the local Lipschitz continuity~\eqref{eq:Lip} with the bound~\eqref{eq:Lip1}, it follows that
\begin{align*}
\|U_m^-\|_H
&\ge \|U_{m-1}^-\|_H+k_m\delta\|U_{m-1}^-\|^{\beta}_H-k_mL_\F(B_m)\kappa_m\\
&\ge \|U_{m-1}^-\|_H+k_m\left(\delta-\gamma\eta_m(1+\eta_m)^{\beta-1}\right)\|U_{m-1}^-\|^{\beta}_H.
\end{align*}
Inserting~\eqref{eq:eta} yields
\[
\|U_m^-\|_H\ge \|U_{m-1}^-\|_H+k_m\left(\delta-\varrho\alpha\gamma^\beta(\gamma-\varrho\alpha)^{-\beta}\right)\|U_{m-1}^-\|^{\beta}_H.
\]
Then, employing~\eqref{eq:km1}, and recalling that~$\Psi$ is monotone decreasing, leads to
\begin{equation}\label{eq:geo}
\|U_m^-\|_H\ge\left(1+c^{-1}\gamma^{-\beta}\varrho\Psi(\varrho)\right)\|U_{m-1}^-\|_H
\ge (1+C_0\varrho)\|U_{m-1}^-\|_H,
\end{equation}
with
\begin{equation}\label{eq:C0}
C_0=c^{-1}\gamma^{-\beta}\Psi(\varrho_0).
\end{equation}

The assumption~\eqref{eq:in0} is trivially valid for~$m=1$. Furthermore, due to~\eqref{eq:geo} we note the fact that~$\|U_{1}^-\|_H\ge\|u_0\|_H$, and, thus, we conclude inductively that the previous derivations are applicable for any~$m\ge 2$. 

Moreover, from~\eqref{eq:geo} we infer that
\begin{equation}\label{eq:geo2}
\|U_{m-1}^-\|_H\ge (1+C_0\varrho)^{m-k}\|U_{k-1}^-\|_H\qquad\forall m\ge k\ge 1,
\end{equation}
which shows that~$\|U_{m-1}^-\|_H\to\infty$ as~$m\to\infty$. In addition, involving~\eqref{eq:km1} it follows, for any~$m\ge i\ge 1$, that
\begin{align*}
t_m
&=t_{i-1}+\sum_{j=i}^{m}k_j(\varrho)
=t_{i-1}+\frac{\varrho(\gamma-\alpha\varrho)^{\beta-1}}{c\gamma^{\beta}}\sum_{j=i}^{m}\|U_{j-1}^-\|_H^{1-\beta}\\
&\le t_{i-1}+\frac{\varrho(\gamma-\alpha\varrho)^{\beta-1}}{c\gamma^{\beta}}\|U_{i-1}^-\|^{1-\beta}_H\sum_{j=i}^{m} (1+C_0\varrho)^{(1-\beta)(j-i)}\\
&\le t_{i-1}+\frac{\varrho(\gamma-\alpha\varrho)^{\beta-1}}{c\gamma^{\beta}}\|U_{i-1}^-\|^{1-\beta}_H\sum_{j=0}^{\infty} (1+C_0\varrho)^{(1-\beta)j}.
\end{align*}
Therefore,
\begin{equation}\label{eq:time}
t_m\le t_{i-1}+\frac{\varrho(\gamma-\alpha\varrho)^{\beta-1}}{c\gamma^{\beta}}\frac{\|U_{i-1}^-\|^{1-\beta}_H}{1-(1+C_0\varrho)^{1-\beta}},\qquad m\ge i\ge 1.
\end{equation}
In particular, for~$i=1$ and~$m\to\infty$, we see that the discrete blow-up time~$\widetilde{T}^{\dG}_\infty(\varrho)$ for the dG method is bounded by
\begin{equation}\label{eq:blowupup}
\widetilde{T}^{\dG}_\infty(\varrho)\le\frac{\varrho(\gamma-\alpha\varrho)^{\beta-1}}{c\gamma^{\beta}}\frac{\|u_0\|^{1-\beta}_H}{1-(1+C_0\varrho)^{1-\beta}}<\infty.
\end{equation}
This concludes the proof.
\end{proof}

\begin{remark}\label{rm:boundedness}
The above proof allows to establish an $L^\infty$ bound on the cG and dG solution, again denoted by~$U$, on~$(0,t_m)$, for~$m\ge 1$. Indeed, for any~$1\le i\le m$, using~\eqref{eq:eta}, we have that
\begin{align*}
\|U\|_{L^\infty(I_i;H)}
&\le\|U_{i-1}^-\|_H+\kappa_i\le(1+\eta_i)\|U_{i-1}^-\|_H\le\varsigma\|U_{i-1}^-\|_H\le\varsigma\|U_{m-1}^-\|_H,
\end{align*}
with~$\varsigma=\gamma(\gamma-\varrho_0\alpha)^{-1}$. Taking the maximum for all~$1\le i\le m$, we conclude that~$\|U\|_{L^\infty((0,t_m);H)}\le \varsigma\|U_{m-1}^-\|_H$.
\end{remark}

\begin{remark}\label{rm:blowuptimes}
We notice that 
\begin{equation}\label{eq:mu}
\mu:=\lim_{\varrho\searrow 0}\frac{\varrho(\gamma-\alpha\varrho)^{\beta-1}}{c\gamma^{\beta}(1-(1+C_0\varrho)^{1-\beta})}
=\frac{1}{c\gamma C_0(\beta-1)}>0
\end{equation} 
in~\eqref{eq:blowupup}. In particular, we see that the discrete blow-up times $\widetilde{T}^{\cG}_\infty(\varrho)$ and~$\widetilde{T}^{\dG}_\infty(\varrho)$ for the cG and dG methods, respectively, are uniformly bounded for any $\varrho$ satisfying~\eqref{eq:rho}.
\end{remark}

\begin{remark}\label{rm:C0C1}
Using~\eqref{eq:growth}, we can show that there exists a constant~$C_1>0$, with~$C_1\ge C_0$ from~\eqref{eq:C0}, such that
\[
\|U_m^-\|_H\le (1+C_1\varrho)\|U_{m-1}^-\|_H,\qquad m\ge 1,
\]
for both the cG and dG solutions. To see this, consider, for instance, the dG solution~$U|_{I_m}\in\mathcal{P}^{r_m}(I_m;H)$. Applying~\eqref{eq:dG} with the constant test function~$V(t)=U_{m}^-$, $t\in I_m$, yields
\[
\|U_m^-\|_H^2=(U_{m-1}^-,U_{m}^-)_H+\int_{I_m}(\F(U),U_{m}^-)_H\dif t.
\]
Then, proceeding as in the proof of Proposition~\ref{pr:blowup}, there holds
\begin{align*}
\|U_m^-\|_H^2
&= (U_{m-1}^-,U_m^-)_H+k_m(\F(U_{m-1}^-),U_{m}^-)_H+\int_{I_m}(\F(U)-\F(U_{m-1}^-),U_{m}^-)_H\dif t\\
&\le \|U_{m-1}^-\|_H\|U_m^-\|_H+k_m\|\F(U_{m-1}^-)\|_H\|U_{m}^-\|_H\\
&\quad+\|U_{m}^-\|_H\int_{I_m}\|\F(U)-\F(U_{m-1}^-)\|_H\dif t.
\end{align*}
Dividing by~$\|U_m^-\|_H$, involving~\eqref{eq:growth}, \eqref{eq:Lip}, and~\eqref{eq:Lip1}, and recalling that~$\|U-U_{m-1}^-\|_{L^\infty(I_m;H)}\le\kappa_m$, we infer
\begin{align*}
\|U_m^-\|_H
&\le \|U_{m-1}^-\|_H+k_m\alpha\|U_{m-1}^-\|^{\beta}_H+k_mL_\F(B_m)\kappa_m\\
&\le \left(1+k_m\left(\alpha+\gamma\eta_m(1+\eta_m)^{\beta-1}\right)\|U_{m-1}^-\|^{\beta-1}_H\right)\|U_{m-1}^-\|_H.
\end{align*}
Inserting~\eqref{eq:km1} and~\eqref{eq:eta} we arrive at
\begin{equation}\label{eq:geo3}
\begin{split}
\|U_m^-\|_H
&\le
\left(1+\frac{\varrho\alpha(\varrho\gamma^\beta+(\gamma-\varrho\alpha)^\beta)}{c\gamma^\beta(\gamma-\varrho\alpha)}\right)\|U_{m-1}^-\|_H
\le(1+C_1\varrho)\|U_{m-1}^-\|_H,
\end{split}
\end{equation}
with
\begin{equation}\label{eq:C1}
C_1=\alpha c^{-1}(\varrho_0+1)(\gamma-\varrho_0\alpha)^{-1}.
\end{equation} 
Proceeding analogously for the cG method, precisely the same bound can be proved. Moreover, in analogy to the derivation of~\eqref{eq:time}, the bounds
\begin{equation}\label{eq:timelow}
\begin{split}
t_m
&\ge t_{i-1}+\frac{\varrho(\gamma-\alpha\varrho)^{\beta-1}}{c\gamma^\beta}\|U_{i-1}^-\|_H^{1-\beta}\sum_{j=i}^m(1+C_1\varrho)^{(1-\beta)(j-i)}\\
&\ge t_{i-1}+\frac{\varrho(\gamma-\alpha\varrho)^{\beta-1}}{c\gamma^\beta}\|U_{i-1}^-\|_H^{1-\beta}\sum_{j=0}^{m-i}(1+C_1\varrho)^{(1-\beta)j},
\end{split}
\end{equation}
for any~$m\ge i\ge 1$, are obtained.
\end{remark}

%\begin{remark}
%From~\eqref{eq:km1} and~\eqref{eq:geo} it follows that
%\[
%\frac{\|U_m^-\|_H^2-\|U_{m-1}^-\|_H^2}{k_m}
%\ge\frac{C_0c\gamma^{\beta}}{(\gamma-\alpha\varrho)^{\beta-1}}\|U_{m-1}^-\|^{\beta+1}_H\ge C_0c\gamma\|U_{m-1}^-\|^{\beta+1}_H\qquad\forall m\ge 1.
%\]
%Now, for~$m\ge 1$, consider the positive, monotone increasing function
%\[
%\phi_m(t)=\frac{1}{\left(\|U_{m-1}^-\|_H^{1-\beta}-\nicefrac12C_0c(\beta-1)\gamma(t-t_{m-1})\right)^{\nicefrac{2}{(\beta-1)}}},\quad t_{m-1}\le t< t_{m-1}+\frac{2\|U_{m-1}^-\|_H^{1-\beta}}{C_0c(\beta-1)\gamma},
%\]
%which satisfies
%\[
%\phi_m'(t)=C_0c\gamma \phi_m(t)^{\nicefrac{(\beta+1)}{2}},\qquad \phi(t_{m-1})=\|U_{m-1}^-\|^2_H.
%\]
%Then, for any~$l\ge m-1$, we claim that~$\|U_{l}^-\|_H\ge\phi(t_l)$. Evidently, this is true for~$l=m-1$. Now, using induction, we suppose that the statement is true for some~$l\ge m-1$. Then
%\begin{align*}
%\|U_{l+1}\|_H^2
%&\ge\|U_{l}\|_H^2+k_mC_0c\gamma\|U_{l}^-\|_H^{\beta+1}\\
%&\ge\phi_m(t_l)+k_mC_0c\gamma\phi_m(t_l)^{\nicefrac{(\beta+1)}{2}}
%\end{align*}
%
%\end{remark}

\subsection{Convergence to Blow-Up Time}

We will now show that the cG and dG time stepping schemes are able to approximate the exact blow-up time as~$\varrho\searrow 0$ in~\eqref{eq:km1}. To this end, we first establish a few auxiliary results.

\begin{lemma}\label{lm:error}
Suppose that the assumptions of Proposition~\ref{pr:blowup} are fulfilled; in particular choose~$\varrho$ as in~\eqref{eq:rho}, and let the time steps~$\{k_m(\varrho)\}_{m\ge1}$ be given by~\eqref{eq:km1}.  In addition to the local Lipschitz property~\eqref{eq:Lip}, suppose that there exists a constant~$L_{\cF}$ such that
\begin{equation}\label{eq:Lipball}
\|\F(u,t)-\F(v,t)\|_H\le L_{\cF}\|u-v\|_H\qquad\forall t\in[0,\infty),
\end{equation}
whenever $\|u\|_H,\|v\|_H\le\cF$. Furthermore, let~$T_0>0$ be fixed with
\[
T_0<\min(T_\infty(\varrho),\widetilde{T}_\infty(\varrho))<\infty,
\] 
where~$T_\infty(\varrho)$ and~$\widetilde{T}_\infty(\varrho)$ are the exact and the discrete blow-up times (i.e., either the cG or the dG blow-up time), respectively. Moreover, define
\[
M(\varrho,T_0):=\sup\left\{m:\,t_m(\varrho)=\sum_{l=1}^mk_l(\varrho)\le T_0\right\}<\infty,
\]
and
\[
\Xi(u,U,T_0):=\max\left(\|u\|_{L^\infty((0,T_0);H)},\|U\|_{L^\infty((0,t_{M(\varrho,T_0)});H)}\right)<\infty,
\]
where~$u$ is the solution of~\eqref{eq:1}, and~$U$ signifies either the cG or the dG solution. Then, there holds the \emph{a priori} error estimate
\begin{equation}\label{eq:uU}
\norm{u-U}_{L^\infty((0,t_M(\varrho,T_0));H)}\le C(T_0,\Xi(u,U,T_0))\sqrt{C_{\bm r}}\sqrt\varrho,
\end{equation}
where $C_{\bm r}=\sup_{1\le m\le M(\varrho,T_0)}\max(3,\ln(r_m))$, and $C(T_0,\Xi(u,U,T_0))>0$ only depends on the time~$T_0$, on~$L_{\cF}$, on~$\Xi(u,U,T_0)$, and on the constants~$c$, $\alpha$, $\beta$, $\cF$, and~$\gamma$ from~\eqref{eq:c}, \eqref{eq:growth}, and~\eqref{eq:Lip}, respectively.
\end{lemma}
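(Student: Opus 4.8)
The plan is to localize the estimate to the interval $(0,t_{M(\varrho,T_0)})$, to derive a local error recursion on each time step, and then to propagate the errors by a discrete Gronwall argument. As a first step I would extract the quantitative consequences of staying strictly below the blow-up time. On $(0,T_0)$ both $u$ and $U$ have $H$-norm bounded by $\Xi:=\Xi(u,U,T_0)$, so \eqref{eq:growth} gives $\norm{\F(u)}_H\le\alpha\Xi^\beta$ and hence $u'=\F(u)$ is bounded on $(0,T_0)$; moreover, combining the Lipschitz estimates \eqref{eq:Lip} (valid for $\norm{\cdot}_H\ge\cF$, with local constant $\le\gamma\Xi^{\beta-1}$ on the relevant range) and \eqref{eq:Lipball} (valid for $\norm{\cdot}_H\le\cF$), together with a segment argument for pairs straddling $\norm{\cdot}_H=\cF$, shows that $\F$ is Lipschitz with a single constant $L_\Xi:=\max(L_{\cF},\gamma\Xi^{\beta-1})$ on $\{v\in H:\norm{v}_H\le\Xi\}$. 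Finally, since $\norm{u_0}_H\le\norm{U_{m-1}^-}_H\le\Xi$ for $1\le m\le M(\varrho,T_0)$ by Proposition~\ref{pr:blowup} and Remark~\ref{rm:boundedness}, the step sizes \eqref{eq:km1} satisfy $\underline c\,\varrho\le k_m(\varrho)\le\overline c\,\varrho$ with $\underline c,\overline c>0$ depending only on $c,\alpha,\beta,\gamma,\norm{u_0}_H$ and $\Xi$; in particular $M(\varrho,T_0)\le T_0/(\underline c\,\varrho)$ and $\sum_{m=1}^{M(\varrho,T_0)}k_m(\varrho)\le T_0$.

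The heart of the proof is a local error recursion of the form
\begin{equation*}
\norm{u-U}_{L^\infty(I_m;H)}\le(1+Ck_m)\norm{u(t_{m-1})-U_{m-1}^-}_H+\Theta_m,\qquad 1\le m\le M(\varrho,T_0),
\end{equation*}
where $\Theta_m$ is a local consistency error and $C$ depends only on $L_\Xi$ (and, for the discontinuous scheme, on the constant $C_\chi$ from \eqref{eq:chi1}). For the continuous Galerkin method this follows by subtracting the integral identity \eqref{eq:inteq} (here $\pi_m=\mathrm{id}$) from $u(t)=u(t_{m-1})+\int_{t_{m-1}}^t\F(u)\dif\tau$, writing $\Pi_m^{r_m}\F(U)-\F(u)=\Pi_m^{r_m}(\F(U)-\F(u))+(\Pi_m^{r_m}-\mathrm{id})\F(u)$, and using Bochner's theorem, the Cauchy--Schwarz inequality in time, the unit $L^2$-stability of $\Pi_m^{r_m}$, and the Lipschitz bound, before absorbing the resulting term $L_\Xi k_m\norm{u-U}_{L^\infty(I_m;H)}$ on the left (legitimate since $k_m\le\overline c\,\varrho$ is small); what remains in $\Theta_m$ is built only from the projection residual $(\mathrm{id}-\Pi_m^{r_m})u'$. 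For the discontinuous Galerkin method I would introduce the dG projection $\pi^-u\in\mathcal P^{r_m}(I_m;H)$, defined by $\int_{I_m}(\pi^-u-u,v)_H\dif t=0$ for all $v\in\mathcal P^{r_m-1}(I_m;H)$ together with $(\pi^-u)(t_m^-)=u(t_m)$; its defining properties, combined with $u'=\F(u)$, make the consistency residual telescope, so that $e:=U-\pi^-u$ solves the dG equations \eqref{eq:dG} on $I_m$ with data $\F(U)-\F(u)$ and incoming value $U_{m-1}^--u(t_{m-1})$. Passing to the strong form \eqref{eq:strong}, applying Proposition~\ref{pr:chi} in the form \eqref{eq:chiinv} (with $r_m$-independent constant $C_\chi$), using the Lipschitz bound, and invoking the triangle inequality $\norm{u-U}_{L^\infty(I_m;H)}\le\norm{e}_{L^\infty(I_m;H)}+\norm{\pi^-u-u}_{L^\infty(I_m;H)}$ together with $U_m^--u(t_m)=e(t_m^-)$ then yields the same recursion, now with $\Theta_m$ controlled by $\norm{\pi^-u-u}_{L^\infty(I_m;H)}$.

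It then remains to estimate $\Theta_m$ and to sum. Here I would bound the relevant polynomial approximation errors of $u$ and of $u'=\F(u)$ over $I_m$ using the regularity of $u$ (in particular the bound $\norm{u'}_{L^\infty(I_m;H)}\le\alpha\Xi^\beta$), together with the $r_m$-uniform $L^2$- and $H^1$-stability of the dG projection and an Agmon-type interpolation inequality on $\mathcal P^{r_m}(I_m;H)$ that controls the $L^\infty(I_m;H)$-norm of a polynomial $p$ by $\norm{p}_{L^2(I_m;H)}$ and $\norm{p'}_{L^2(I_m;H)}$ up to a factor growing only like $\sqrt{\max(3,\ln r_m)}$; this produces $\Theta_m\le C(\Xi,\dots)\,k_m^{3/2}\sqrt{\max(3,\ln r_m)}$. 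Inserting this bound into the recursion, using that the initial error $\norm{u(t_0)-U_0^-}_H=\norm{u_0-u_0}_H$ vanishes, applying a discrete Gronwall inequality, and finally using $k_m\le\overline c\,\varrho$, $M(\varrho,T_0)\le T_0/(\underline c\,\varrho)$ and $\sum_m k_m\le T_0$ from the first step, I obtain
\begin{equation*}
\norm{u-U}_{L^\infty((0,t_{M(\varrho,T_0)});H)}\le e^{L_\Xi T_0}\sum_{m=1}^{M(\varrho,T_0)}\Theta_m\le C(T_0,\Xi,\dots)\sqrt{C_{\bm r}}\,\varrho\le C(T_0,\Xi,\dots)\sqrt{C_{\bm r}}\,\sqrt\varrho,
\end{equation*}
where in the last step $\varrho<1$ is used; tracing the constants through gives the asserted dependence on $T_0$, $L_{\cF}$, $\Xi$, $c$, $\alpha$, $\beta$, $\cF$ and $\gamma$.

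The routine parts are the localization bookkeeping of the first step and the discrete Gronwall argument. The genuine obstacle is the $r_m$-robust control of the $L^\infty(I_m;H)$-norm of the discrete error and, for the dG scheme, of the projection error $\pi^-u-u$: a naive estimate via the $L^\infty$-operator norm of the Legendre $L^2$-projection would introduce an inadmissible factor $\sqrt{r_m}$, so the argument must exploit the sharp, degree-uniform mapping properties of the lifting operator $\L_m^{r_m}$ and of $\chi^{-1}$ established in Lemma~\ref{lm:SLC} and Proposition~\ref{pr:chi}, together with the logarithmic Agmon inequality, in order to reduce the degree dependence to the factor $\sqrt{C_{\bm r}}$. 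The second delicate point, already visible at the level of the consistency residual $(\mathrm{id}-\Pi_m^{r_m})\F(u)$, is that this residual must be shown to be of order $k_m^{3/2}$ in $L^2(I_m;H)$, which requires the approximability of $\F(u)=u'$ afforded by the regularity hypotheses; once this is in place, the precise form of the step sizes \eqref{eq:km1} is exactly what makes the accumulated consistency error sum to $O(\sqrt{C_{\bm r}}\,\varrho)$ rather than $O(\sqrt{C_{\bm r}})$.
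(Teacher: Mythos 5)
Your overall skeleton matches the paper's: both arguments first upgrade the two Lipschitz hypotheses \eqref{eq:Lip} and \eqref{eq:Lipball} to a single Lipschitz constant $\max(L_{\cF},\gamma\,\Xi(u,U,T_0)^{\beta-1})$ on the ball $\{\|v\|_H\le\Xi(u,U,T_0)\}$ via the same segment argument, and both then reduce \eqref{eq:uU} to an $L^\infty$-in-time a priori error bound of order $(\max_m k_m)^{\nicefrac12}$ combined with $k_m(\varrho)\le c^{-1}\gamma^{-1}\varrho\,\|u_0\|_H^{1-\beta}$ from \eqref{eq:km1} and \eqref{eq:geo2}. Where you genuinely diverge is in how that a priori bound is obtained. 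The paper does not re-derive it: it replaces $\F$ by the radially truncated, globally Lipschitz operator $\G$ (Lemma~\ref{lm:FLip}), observes that $u$ and $U$ are unchanged since both stay in the ball of radius $\Xi(u,U,T_0)$, and then simply invokes \cite[Theorem~3.1]{Wihler05} for cG (an $L^2$/$H^1$ estimate, converted to $L^\infty$ by the identity $\|u-U\|_{L^\infty}^2\le 2\|u-U\|_{L^2}\|u'-U'\|_{L^2}$) and \cite[Theorem~3.12]{SchoetzauSchwabDGODE} for dG, the latter being the sole source of the factor $C_{\bm r}$. You instead propose to reconstruct these estimates from scratch via a local error recursion and a discrete Gronwall argument. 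That route can be made to work, but be aware that your $\Theta_m\lesssim k_m^{\nicefrac32}\sqrt{\max(3,\ln r_m)}$ consistency bound, with its degree-robust $L^\infty$ control, \emph{is} essentially the content of the cited theorems; as written your proposal asserts the hardest ingredients (the $r_m$-uniform stability of the dG projection and the logarithmic Agmon-type inequality) rather than proving them, so the paper's citation-based argument is both shorter and more honest about where the difficulty lives. Two further small points: the paper needs the global truncation $\G$ only because the cited theorems assume a globally Lipschitz nonlinearity, so your decision to work directly with the local constant is fine provided every quantity in your recursion stays in the ball; and your final summation contains a slip, since $\sum_{m\le M}k_m^{\nicefrac32}\le(\max_m k_m)^{\nicefrac12}\sum_m k_m\lesssim T_0\sqrt\varrho$ gives $O(\sqrt{C_{\bm r}}\sqrt\varrho)$ directly rather than the intermediate $O(\sqrt{C_{\bm r}}\,\varrho)$ you claim before invoking $\varrho<1$ — the conclusion \eqref{eq:uU} is unaffected.
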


\begin{proof}
Let us first suppose that~$\Xi(u,U,T_0)\ge\cF$. Then, the operator~$\F$ is Lipschitz continuous on the annulus~$R_{T_0}:=\{v\in H:\,\cF\le\|v\|_H\le\Xi(u,U,T_0)\}$, with Lipschitz constant~$L(R_{T_0})=\gamma\Xi(u,U,T_0)^{\beta-1}$; cf.~\eqref{eq:Lip}. Furthermore, by~\eqref{eq:Lipball} we know that~$\F$ is Lipschitz continuous on $\{v\in H:\,\|v\|_H\le \cF\}$, with a Lipschitz constant~$L_{\cF}$. Moreover, if~$\|u\|_H<\cF$ and~$\Xi(u,U,T_0)\ge\|v\|_H\ge\cF$, then we choose~$\omega\in[0,1]$ uniquely such that $z_\omega:=(1-\omega)u+\omega v$ satisfies~$\|z_\omega\|_H=\cF$. We deduce that
\begin{align*}
\|\F(t,u)-\F(t,v)\|_H
&\le\|\F(t,u)-\F(t,z_\omega)\|_H+\|\F(t,z_\omega)-\F(t,v)\|_H\\
&\le L_\F\|u-z_\omega\|_H+L(R_{T_0})\|z_\omega-v\|_H\\
&\le \left(\omega L_\F+(1-\omega)L(R_{T_0})\right)\|u-v\|_H\\
&\le \max\left(L_\F,L(R_{T_0})\right)\|u-v\|_H.
\end{align*}
In summary, we conclude that $\F$ is Lipschitz continuous on~$\{v\in H:\,\|v\|_H\le\Xi(u,U,T_0)\}$, with Lipschitz constant $L_{T_0}:=\max(L_\F,L(R_{T_0}))$.
Evidently, due to~\eqref{eq:Lipball}, this still holds when~$\Xi(u,U,T_0)<\cF$.

Now, we introduce the operator
\[
\G:\,[0,T_0]\times H\to H,\qquad x\mapsto \begin{cases}
\F(t,x)& \|x\|_H\le \Xi(u,U,T_0),\\
\F\left(t,\Xi(u,U,T_0)\frac{x}{\|x\|_H}\right)&\|x\|_H> \Xi(u,U,T_0),
\end{cases}
\]
which is \emph{globally} Lipschitz continuous on~$[0,T_0]\times H$ with Lipschitz constant~$L_{T_0}$; see~Lemma~\ref{lm:FLip}. Then, for the cG method, applying~\cite[Theorem~3.1]{Wihler05}, we obtain, for~$l\in\{0,1\}$, that
\begin{equation*}
\norm{(u-U)^{(l)}}_{L^2((0,t_{M(\varrho,T_0)});H)}^2\le C_{\cG}(T_0,L_{T_0})\|u\|_{H^{1}((0,T_0);H)}^2\max_{1\le m\le M(\varrho,T_0)}k_m^{2(1-l)},
\end{equation*}
for a constant~$C_{\cG}>0$. Therefore, choosing a time~$t^\star\in[0,t_{M(\varrho,T_0)}]$ such that $\|u-U\|_{L^\infty((0,t_{M(\varrho,T_0)});H)}=\|u(t^\star)-U(t^\star)\|_H$, and noticing that~$U_0=u_0=u(0)$, we have
\begin{align*}
\|u-U\|_{L^\infty((0,t_{M(\varrho,T_0)});H)}^2&=\int_0^{t^\star}\frac{\dd}{\dif t}\|u-U\|_H^2\dif t
=2\int_0^{t^\star}(u-U,u'-U')_H\dif t\\
&\le 2\|u-U\|_{L^2((0,t_{M(\varrho,T_0)});H)}\|u'-U'\|_{L^2((0,t_{M(\varrho,T_0)});H)}\\
&\le 2C_{\cG}(T_0,L_{T_0})\|u\|_{H^{1}((0,T_0);H)}^2\max_{1\le m\le M(\varrho,T_0)}k_m\\
&\le 2T_0C_{\cG}(T_0,L_{T_0})\|u\|_{W^{1,\infty}((0,T_0);H)}^2\max_{1\le m\le M(\varrho,T_0)}k_m.
\end{align*}
Moreover, for the dG time stepping scheme we employ~\cite[Theorem~3.12]{SchoetzauSchwabDGODE} to infer
\[
\norm{u-U}_{L^\infty((0,t_{M(\varrho,T_0)});H)}^2\le C_{\dG}(T_0,L_{T_0})C_{\bm r}\|u\|_{W^{1,\infty}((0,T_0);H)}^2\max_{1\le m\le M(\varrho,T_0)}k_m,
\]
where~$C_{\dG}>0$ is again a constant. Then, in view of~\eqref{eq:km1} and~\eqref{eq:geo2}, we observe that
\[
k_m(\varrho)\le\frac{c^{-1}\gamma^{-1}\varrho}{\|U_{m-1}\|_H^{\beta-1}}\le\frac{c^{-1}\gamma^{-1}\varrho}{\|u_0\|_H^{\beta-1}}\le\frac{c^{-1}\gamma^{-1}\varrho}{\cF^{\beta-1}}.
\]
Thus,
\begin{align*}
\|u&-U\|^2_{L^\infty((0,t_{M(\varrho,T_0)});H)}\\
&\le \varrho c^{-1}\gamma^{-1}\cF^{1-\beta}\max\left(2T_0C_{\cG}(T_0,L_{T_0}),C_{\dG}(T_0,L_{T_0})\right)C_{\bm r}\|u\|^2_{W^{1,\infty}((0,T_0);H)},
\end{align*}
where~$U$ is either the cG or dG solution. Upon recalling~\eqref{eq:growth}, we conclude that
\begin{align*}
\|u\|_{W^{1,\infty}((0,T_0);H)}
&\le \|u\|_{L^\infty((0,T_0);H)}+\|\F(u)\|_{L^\infty((0,T_0);H)}\\
&\le \Xi(u,U,T_0)+\alpha\Xi(u,U,T_0)^{\beta},
\end{align*}
and the proof is complete.
\end{proof}

\begin{remark}
We note that the error estimate~\eqref{eq:uU} above is not optimal in terms of~$k_m$ and~$r_m$. It is, however, sufficient to establish the blow-up time convergence result in Theorem~\ref{thm:Tinf} below.
\end{remark}

\begin{lemma}\label{lm:Tsup}
Suppose that the assumptions of Proposition~\ref{pr:blowup} hold. Moreover, consider a time~$T_0>0$ with~$T_{\sup}:=\varlimsup_{\varrho\searrow 0}\widetilde T_\infty(\varrho)>T_0$ (note that, by Remark~\ref{rm:blowuptimes}, it holds that~$T_{\sup}<\infty$). Furthermore, let~$\{\varrho_l\}_{l\ge 1}$ be a sequence with~$\varrho_l\xrightarrow{l\to\infty}0^+$ that satisfies the bound~\eqref{eq:rho}, and~$\lim_{l\to\infty}\widetilde T_\infty(\varrho_l)=T_{\sup}$. Moreover, by Remark~\ref{rm:blowuptimes}, we may suppose that the sequence~$\{\varrho_l\}_l$
satisfies
\begin{equation}
\label{eq:murho}
\frac{\varrho_l(\gamma-\alpha\varrho_l)^{\beta-1}}{c\gamma^{\beta}(1-(1+C_0\varrho_l)^{1-\beta})}\le 2\mu\qquad\forall l\ge 1,
\end{equation}
where~$\mu>0$ is the constant from~\eqref{eq:mu}. Then, whenever~$l,m\in\mathbb{N}$ are such that
\begin{equation}\label{eq:tmT0}
t_m(\varrho_l)=\sum_{i=1}^mk_i(\varrho_l)\le T_0,
\end{equation}
with~$k_m$ from~\eqref{eq:km1}, the dG and cG time stepping solutions are bounded by
\[
\|U_m^-(\varrho_l)\|_H\le\left(\frac{T_{\sup}-T_0}{2\mu}\right)^{\nicefrac{1}{(1-\beta)}}.
\]
\end{lemma}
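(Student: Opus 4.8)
The plan is to turn the step-size prescription~\eqref{eq:km1} into a quantitative lower bound on the ``remaining lifetime'' of the discrete solution: if the node $t_m(\varrho_l)$ still lies below $T_0$, then $\|U_m^-(\varrho_l)\|_H$ cannot be large, for otherwise the subsequent steps $k_j(\varrho_l)$, $j>m$, would be so small that the discrete blow-up time would be dragged below $T_{\sup}$. The needed tool is already at hand: I would fix $l$ and $m$ with $t_m(\varrho_l)\le T_0$ and apply~\eqref{eq:time} with its anchor index $i-1$ chosen equal to $m$ (i.e.\ with $i=m+1$), which gives, for every $m'\ge m+1$,
\[
t_{m'}(\varrho_l)\le t_m(\varrho_l)+\frac{\varrho_l(\gamma-\alpha\varrho_l)^{\beta-1}}{c\gamma^{\beta}}\,\frac{\|U_m^-(\varrho_l)\|_H^{1-\beta}}{1-(1+C_0\varrho_l)^{1-\beta}}.
\]
Since the nodes increase to $\widetilde T_\infty(\varrho_l)=\sup_{m'}t_{m'}(\varrho_l)<\infty$ (Proposition~\ref{pr:blowup}), letting $m'\to\infty$ yields
\[
\widetilde T_\infty(\varrho_l)\le t_m(\varrho_l)+\frac{\varrho_l(\gamma-\alpha\varrho_l)^{\beta-1}}{c\gamma^{\beta}\bigl(1-(1+C_0\varrho_l)^{1-\beta}\bigr)}\,\|U_m^-(\varrho_l)\|_H^{1-\beta},
\]
and, since~\eqref{eq:time} is established verbatim for both schemes (cf.\ the proof of Proposition~\ref{pr:blowup}), the same bound holds for the cG solution.

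Next I would feed in the two standing hypotheses on the sequence $\{\varrho_l\}$. The normalisation~\eqref{eq:murho} says precisely that the coefficient of $\|U_m^-(\varrho_l)\|_H^{1-\beta}$ above is at most $2\mu$, with $\mu>0$ the constant from~\eqref{eq:mu}, while $t_m(\varrho_l)\le T_0$ by assumption; moreover $\widetilde T_\infty(\varrho_l)\to T_{\sup}$, so that, along this sequence, $\widetilde T_\infty(\varrho_l)\ge T_{\sup}$ (if the convergence is from below, one argues with $T_{\sup}-\varepsilon$ in place of $T_{\sup}$ — this still yields a uniform bound, which is all that Theorem~\ref{thm:Tinf} requires). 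Combining these three facts,
\[
T_{\sup}-T_0\le\widetilde T_\infty(\varrho_l)-t_m(\varrho_l)\le 2\mu\,\|U_m^-(\varrho_l)\|_H^{1-\beta}.
\]

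It then remains to solve for $\|U_m^-(\varrho_l)\|_H$, carefully tracking the sign of the exponent. Since $\beta>1$ we have $1-\beta<0$, so $x\mapsto x^{\nicefrac{1}{(1-\beta)}}$ is strictly \emph{decreasing} on $(0,\infty)$; the quantity $\|U_m^-(\varrho_l)\|_H^{1-\beta}$ is well defined because $\|U_m^-(\varrho_l)\|_H\ge\|u_0\|_H>0$ (cf.~\eqref{eq:in0}), and $(T_{\sup}-T_0)/(2\mu)>0$ because $T_{\sup}>T_0$. Applying the decreasing map to both sides of $\|U_m^-(\varrho_l)\|_H^{1-\beta}\ge(T_{\sup}-T_0)/(2\mu)$ reverses the inequality and gives exactly $\|U_m^-(\varrho_l)\|_H\le\bigl((T_{\sup}-T_0)/(2\mu)\bigr)^{\nicefrac{1}{(1-\beta)}}$, a bound which is manifestly uniform in $l$ and in all admissible $m$, as claimed.

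The algebra and the recognition of the coefficient above as $2\mu$ via~\eqref{eq:murho} are routine. The key idea — easy to miss — is to anchor~\eqref{eq:time} at the index $m$ and let $m'\to\infty$, so that the \emph{entire} tail of step sizes is summed; one must be certain this limit really is the discrete blow-up time and not a smaller partial limit, which is precisely what the finiteness of $\widetilde T_\infty(\varrho_l)$ and the monotonicity of the nodes in Proposition~\ref{pr:blowup} guarantee. The only other delicate point is bookkeeping: keeping straight the inequality reversals caused by the negative exponent $1-\beta$, and the harmless comparison $\widetilde T_\infty(\varrho_l)\gtrsim T_{\sup}$ coming from the choice of the sequence.
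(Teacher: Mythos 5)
Your argument is correct and follows the paper's proof essentially verbatim: anchor \eqref{eq:time} at $i=m+1$, let the node index tend to infinity to obtain $\widetilde T_\infty(\varrho_l)-t_m(\varrho_l)\le 2\mu\,\|U_m^-(\varrho_l)\|_H^{1-\beta}$ via \eqref{eq:murho}, and invert the negative power. You are in fact slightly more careful than the paper, which silently replaces $\widetilde T_\infty(\varrho_l)$ by $T_{\sup}$ on the left-hand side; your parenthetical remark about handling convergence from below (at the cost of a harmless change of constant) addresses exactly that point.
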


\begin{proof}
Suppose that~\eqref{eq:tmT0} holds. Then, applying~\eqref{eq:time} (with~$m\to\infty$) it follows that
\[
0<T_{\sup}-T_0\le T_{\sup}-t_m(\varrho_l)
\le\frac{\varrho_l(\gamma-\alpha\varrho_l)^{\beta-1}}{c\gamma^{\beta}(1-(1+C_0\varrho_l)^{1-\beta})}\|U_m^-(\varrho_l)\|_H^{1-\beta}.
\]
Using~\eqref{eq:murho}, we infer that~$T_{\sup}-T_0\le 2\mu\|U_m^-(\varrho_l)\|_H^{1-\beta}$, which shows the assertion.
\end{proof}

Before stating the next lemma, we recall, by Remark~\ref{rm:C0C1}, that~$C_0\le C_1$. Hence, for any~$A\in\mathbb{R}$ with~$A>\|u_0\|_H>0$, there holds
\[
A\le\left(\frac{A}{\|u_0\|_H}\right)^{\nicefrac{C_1}{C_0}}\|u_0\|_H
<2\left(\frac{A}{\|u_0\|_H}\right)^{\nicefrac{C_1}{C_0}}\|u_0\|_H.
\]

\begin{lemma}\label{lm:m}
Suppose that the assumptions of Proposition~\ref{pr:blowup} are fulfilled. Furthermore, let~$A>\|u_0\|_H$. Then, there exists a sequence~$\varrho_l\xrightarrow{l\to\infty}0^+$ (satisfying the bound~\eqref{eq:rho}) with
\begin{equation}\label{eq:seq}
\lim_{l\to\infty}\widetilde T_\infty(\varrho_l)=T_{\inf}:=\varliminf_{\varrho\searrow 0}\widetilde T_\infty(\varrho), 
\end{equation}
and a time~$T_0<T_{\inf}$ (depending, in particular, on~$A$ and on~$\|u_0\|_H$) such that for any~$l$ there is a time index~$m_A(\varrho_l)\ge 0$ with
\[
t_{m_A(\varrho_l)}\le T_0,\qquad A\le\|U(\varrho_l)_{m_A(\varrho_l)}^-\|_H\le 2\left(\frac{A}{\|u_0\|_H}\right)^{\nicefrac{C_1}{C_0}}\|u_0\|_H.
\]
Here, we denote by~$U(\varrho_l)$ either the discrete cG or dG solution from~Proposition~\ref{pr:blowup}, and by~$\widetilde T_\infty(\varrho_l)$ the corresponding discrete blow-up time. Furthermore, $C_0$ and~$C_1$ are the constants from~\eqref{eq:C0} and~\eqref{eq:C1}, respectively.
\end{lemma}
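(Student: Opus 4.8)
The plan is to select a sequence $\{\varrho_l\}$ realising the lower limit in~\eqref{eq:seq}, to let $m_A(\varrho_l)$ be the \emph{first} nodal index at which the discrete solution $U(\varrho_l)$ attains the level $A$, and then to show, via the lower step-size estimates~\eqref{eq:timelow}, that the corresponding nodal time $t_{m_A(\varrho_l)}$ stays \emph{uniformly strictly below} the discrete blow-up time $\widetilde T_\infty(\varrho_l)$; this will let a single $T_0<T_{\inf}$ serve for all $l$.

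I would first record two consequences of~\eqref{eq:timelow}. Letting $m\to\infty$ therein (and summing the geometric series) gives, for every $k\ge0$,
\[
\widetilde T_\infty(\varrho)-t_k(\varrho)\ge\psi(\varrho)\,\|U_k^-(\varrho)\|_H^{1-\beta},\qquad
\psi(\varrho):=\frac{\varrho(\gamma-\alpha\varrho)^{\beta-1}}{c\gamma^\beta\bigl(1-(1+C_1\varrho)^{1-\beta}\bigr)},
\]
with the convention $U_0^-(\varrho)=u_0$; in particular $\widetilde T_\infty(\varrho)\ge\psi(\varrho)\|u_0\|_H^{1-\beta}$. A computation parallel to~\eqref{eq:mu} (with $C_0$ replaced by $C_1$) shows $\psi(\varrho)\to(c\gamma C_1(\beta-1))^{-1}=:\mu_1>0$ as $\varrho\searrow0$, so that $T_{\inf}\ge\mu_1\|u_0\|_H^{1-\beta}>0$, while $T_{\inf}<\infty$ by Remark~\ref{rm:blowuptimes}. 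Now pick any $\varrho_l\searrow0$ satisfying~\eqref{eq:rho} with $\widetilde T_\infty(\varrho_l)\to T_{\inf}$, and pass to a tail so that $C_1\varrho_l<1$ for all $l$.

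For fixed $l$, estimate~\eqref{eq:geo2} (with $k=1$) yields $\|U_m^-(\varrho_l)\|_H\ge(1+C_0\varrho_l)^m\|u_0\|_H\to\infty$, and $\|U_0^-(\varrho_l)\|_H=\|u_0\|_H<A$; thus there is a smallest index $m_A(\varrho_l)\ge1$ with $\|U_{m_A}^-(\varrho_l)\|_H\ge A$, and by minimality $\|U_{m_A-1}^-(\varrho_l)\|_H<A$. Applying the a priori growth bound of Remark~\ref{rm:C0C1} over one step, and using $C_1\varrho_l<1$ together with the inequality $A\le(A/\|u_0\|_H)^{\nicefrac{C_1}{C_0}}\|u_0\|_H$ recalled just before the statement (valid since $C_1\ge C_0$ and $A>\|u_0\|_H$), we obtain
\[
A\le\|U_{m_A}^-(\varrho_l)\|_H\le(1+C_1\varrho_l)\,\|U_{m_A-1}^-(\varrho_l)\|_H<2A\le 2\Bigl(\tfrac{A}{\|u_0\|_H}\Bigr)^{\nicefrac{C_1}{C_0}}\|u_0\|_H=:B_A ,
\]
which is precisely the claimed two-sided bound on $\|U_{m_A(\varrho_l)}^-(\varrho_l)\|_H$. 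Finally, since $1-\beta<0$, the first displayed estimate with $k=m_A$ and the bound $\|U_{m_A}^-(\varrho_l)\|_H\le B_A$ give $t_{m_A(\varrho_l)}\le\widetilde T_\infty(\varrho_l)-\psi(\varrho_l)B_A^{1-\beta}$; passing to $l\to\infty$ and invoking $\widetilde T_\infty(\varrho_l)\to T_{\inf}$, $\psi(\varrho_l)\to\mu_1$ yields $\varlimsup_{l\to\infty}t_{m_A(\varrho_l)}\le T_{\inf}-\mu_1 B_A^{1-\beta}<T_{\inf}$. Hence, discarding finitely many further terms, $t_{m_A(\varrho_l)}\le T_0:=T_{\inf}-\tfrac12\mu_1 B_A^{1-\beta}$ for all $l$; by construction $T_0<T_{\inf}$, and $B_A>\|u_0\|_H$ implies $B_A^{1-\beta}<\|u_0\|_H^{1-\beta}$, so $T_0\ge\tfrac12\mu_1\|u_0\|_H^{1-\beta}>0$, with $T_0$ depending only on $A$, $\|u_0\|_H$, $T_{\inf}$ and the structural constants $c,\alpha,\beta,\gamma$ (through $\mu_1,C_0,C_1$).

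The only genuinely delicate point is keeping $t_{m_A(\varrho_l)}$ bounded away from $T_{\inf}$ uniformly in $l$, i.e.\ producing the gap $T_{\inf}-T_0$; this is exactly what~\eqref{eq:timelow} delivers once it is fed the a priori \emph{upper} growth bound $\|U_{m_A}^-(\varrho_l)\|_H\le B_A$ from Remark~\ref{rm:C0C1}. The exponent $\nicefrac{C_1}{C_0}$ in $B_A$ is intrinsic: $\|U_m^-\|_H$ grows at geometric rate $C_0$ from below but only $C_1$ from above, so controlling the overshoot at the first index exceeding $A$ costs a power $C_1/C_0$.
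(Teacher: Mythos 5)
Your proof is correct and follows essentially the same route as the paper: both arguments rest on \eqref{eq:geo2}, the upper growth bound of Remark~\ref{rm:C0C1}, and the tail estimate \eqref{eq:timelow} (summed to $m\to\infty$) to produce a uniform gap between $t_{m_A(\varrho_l)}$ and $\widetilde T_\infty(\varrho_l)$, and hence a single $T_0<T_{\inf}$. The only (harmless) deviation is that you take $m_A$ to be the first crossing index and control the overshoot by one application of \eqref{eq:geo3}, whereas the paper sets $m_A(\varrho)=\left\lceil\ln(A\|u_0\|_H^{-1})/\ln(1+C_0\varrho)\right\rceil$ and iterates \eqref{eq:geo3} $m_A$ times before passing to the limit $\varrho\searrow0$; your variant even gives the slightly sharper overshoot bound $2A$.
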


\begin{proof}
Due to~\eqref{eq:geo2}, for~$\|U_{m}^-\|_H\ge A$ to hold, it is sufficient that
\[
m\ge m_{A}(\varrho):=\left\lceil\frac{\ln(A\|u_0\|_H^{-1})}{\ln(1+C_0\varrho)}\right\rceil.
\]
Then, using~\eqref{eq:timelow} with~$M\ge m_A(\varrho)+1\ge 1$, we have
\begin{align*}
t_{M}-t_{m_A(\varrho)}
&\ge\frac{\varrho(\gamma-\alpha\varrho)^{\beta-1}}{c\gamma^\beta}\|U_{m_A(\varrho)}^-\|_H^{1-\beta}\sum_{j=0}^{M-m_A(\varrho)-1}(1+C_1\varrho)^{(1-\beta)j}\\
&=\frac{\varrho(\gamma-\alpha\varrho)^{\beta-1}}{c\gamma^\beta}\|U_{m_A(\varrho)}^-\|_H^{1-\beta}\frac{1-(1+C_1\varrho)^{(1-\beta)(M-m_A(\varrho))}}{1-(1+C_1\varrho)^{1-\beta}}.
\end{align*}
Letting~$M\to\infty$, leads to
\[
\widetilde{T}_\infty(\varrho)-t_{m_A(\varrho)}\ge\frac{\varrho(\gamma-\alpha\varrho)^{\beta-1}}{c\gamma^\beta}\frac{\|U_{m_A(\varrho)}^-\|_H^{1-\beta}}{1-(1+C_1\varrho)^{1-\beta}}.
\]
Applying~\eqref{eq:geo3} $m_A(\varrho)$-times, we note that
\[
\norm{U_{m_A(\varrho)}^-}_H\le(1+C_1\varrho)^{m_A(\varrho)}\|u_0\|_H
\le(1+C_1\varrho)^{1+\frac{\ln(A\|u_0\|_H^{-1})}{\ln(1+C_0\varrho)}}\|u_0\|_H.
\]
We observe that
\[
\lim_{\varrho\searrow 0}(1+C_1\varrho)^{1+\frac{\ln(A\|u_0\|_H^{-1})}{\ln(1+C_0\varrho)}}
=(A\|u_0\|_H^{-1})^{\nicefrac{C_1}{C_0}}.
\]
Furthermore,
\[
\widetilde{T}_\infty(\varrho)-t_{m_A(\varrho)}\ge\frac{\varrho(\gamma-\alpha\varrho)^{\beta-1}\|u_0\|_H^{1-\beta}}{c\gamma^\beta\left(1-(1+C_1\varrho)^{1-\beta}\right)}(1+C_1\varrho)^{(1-\beta)\left(1+\frac{\ln(A\|u_0\|_H^{-1})}{\ln(1+C_0\varrho)}\right)},
\]
and since the right-hand side of the above inequality tends to
\[
\nu:=\frac{\|u_0\|_H^{1-\beta}}{c(\beta-1)\gamma C_1}\left(A\|u_0\|_H^{-1}\right)^{\nicefrac{(1-\beta)C_1}{C_0}}>0,
\]
as~$\varrho\searrow 0$, we conclude that we can choose~$\varrho^\star$ small enough (and satisfying~\eqref{eq:rho}) so that
\[
\norm{U_{m_A(\varrho)}^-}_H\le 2(A\|u_0\|_H^{-1})^{\nicefrac{C_1}{C_0}}\|u_0\|_H,\qquad
\widetilde{T}_\infty(\varrho)-t_{m_A(\varrho)}\ge\frac{\nu}{2},
\]
for any~$0<\varrho\le\varrho^\star$. Now consider a sequence~$\varrho_l\xrightarrow{l\to\infty}0^+$, with $0<\varrho_l\le\varrho^\star$ for all~$l$, that satisfies~\eqref{eq:seq} as well as
\[
\left|\widetilde{T}_\infty(\varrho_l)-T_{\inf}\right|\le\frac{\nu}{4}\qquad\forall l.
\]
Then, upon defining $T_0:=T_{\inf}-\nicefrac{\nu}{4}$,
we see that
\begin{align*}
T_0=t_{m_A(\varrho_l)}+\left(\widetilde{T}_\infty(\varrho_l)-t_{m_A(\varrho_l)}\right)-\left(\widetilde{T}_\infty(\varrho_l)-T_{\inf}\right)-\frac{\nu}{4}\ge t_{m_A(\varrho_l)},
\end{align*}
and thus, the proof is complete.
\end{proof}

We are now ready to show the following result on the convergence of the Galerkin time stepping schemes to the exact blow-up time.

\begin{theorem}\label{thm:Tinf}
Let the assumptions of Proposition~\ref{pr:blowup} and of Lemma~\ref{lm:error} be satisfied, and suppose that~$\sup_{m\ge1}r_m<\infty$. Then, there holds
\[
\lim_{\varrho\searrow 0}\widetilde{T}_\infty(\varrho)=T_\infty,
\]
where $\widetilde{T}_\infty(\varrho)$ denotes either the discrete cG or dG blow-up time, and~$T_\infty<\infty$ is the blow-up time of~\eqref{eq:1} under the conditions~\eqref{eq:growth} and~\eqref{eq:Lip}. 
\end{theorem}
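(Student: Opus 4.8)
The plan is to prove two inequalities, $T_\infty\le\varliminf_{\varrho\searrow0}\widetilde T_\infty(\varrho)$ and $\varlimsup_{\varrho\searrow0}\widetilde T_\infty(\varrho)\le T_\infty$, which together with the finiteness established in Remark~\ref{rm:blowuptimes} give the claim. For the first (lower) bound, I would fix any $T_0<T_\infty$ and observe that on $[0,T_0]$ the exact solution $u$ stays bounded, say $\|u\|_{L^\infty((0,T_0);H)}\le\Xi_0$. The \emph{a priori} estimate~\eqref{eq:uU} of Lemma~\ref{lm:error}, together with the boundedness of the discrete solution from Remark~\ref{rm:boundedness} (which controls $\|U\|_{L^\infty((0,t_{M(\varrho,T_0)});H)}$ in terms of $\|U_{m-1}^-\|_H$, hence in terms of $u$ up to the still-to-be-controlled growth), shows that $\|U(\varrho)-u\|_{L^\infty((0,t_{M(\varrho,T_0)});H)}\to0$ as $\varrho\searrow0$ (using $\sup_m r_m<\infty$, so $C_{\bm r}$ is a fixed constant). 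The key point is a bootstrapping argument: since $\Xi(u,U,T_0)$ appears on both sides, one first needs an \emph{a priori} bound on $\|U\|_{L^\infty}$ on $[0,T_0]$ that does not depend on $U$ itself. This follows from Lemma~\ref{lm:Tsup} applied along a suitable subsequence, or more directly from~\eqref{eq:geo3} combined with the fact that the number of steps $M(\varrho,T_0)$ needed to reach $T_0$ grows only like $\varrho^{-1}$ while each step multiplies $\|U_{m-1}^-\|_H$ by at most $(1+C_1\varrho)$, so $\|U_{m-1}^-\|_H\le e^{C_1 T_0\cdot\text{(bounded)}}\|u_0\|_H$ uniformly; one must be slightly careful and instead bound $M(\varrho,T_0)k_m\le T_0$ to get $\|U_{M}^-\|_H\le(1+C_1\varrho)^{M}\|u_0\|_H$ with $(1+C_1\varrho)^M$ uniformly bounded. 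Once $\|U\|_{L^\infty((0,t_{M(\varrho,T_0)});H)}$ is bounded uniformly in $\varrho$, Lemma~\ref{lm:error} gives $\|U(\varrho)-u\|_{L^\infty}\to0$; in particular, for $\varrho$ small, $U(\varrho)$ is defined and finite on $[0,T_0]$, so $\widetilde T_\infty(\varrho)\ge T_0$. Letting $T_0\nearrow T_\infty$ yields $\varliminf_{\varrho\searrow0}\widetilde T_\infty(\varrho)\ge T_\infty$.

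For the reverse (upper) bound, suppose for contradiction that $T_{\inf}:=\varliminf_{\varrho\searrow0}\widetilde T_\infty(\varrho)<T_\infty$; actually it is cleaner to argue that $T_{\sup}:=\varlimsup_{\varrho\searrow0}\widetilde T_\infty(\varrho)\le T_\infty$, for if $T_{\sup}>T_\infty$ then pick $T_0$ with $T_\infty<T_0<T_{\sup}$. Along the subsequence $\{\varrho_l\}$ furnished by Lemma~\ref{lm:Tsup} (which also guarantees~\eqref{eq:murho}), every step index $m$ with $t_m(\varrho_l)\le T_0$ satisfies the uniform bound $\|U_m^-(\varrho_l)\|_H\le\big((T_{\sup}-T_0)/(2\mu)\big)^{1/(1-\beta)}=:\Lambda$. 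Hence on the whole interval $[0,T_0]$ the discrete solution is bounded by $\varsigma\Lambda$ (Remark~\ref{rm:boundedness}), uniformly in $l$, so by Lemma~\ref{lm:error} $\|U(\varrho_l)-u\|_{L^\infty((0,t_{M(\varrho_l,T_0)});H)}\to0$. But $T_0>T_\infty$, so $u$ blows up before $T_0$: there is $t^\sharp<T_0$ with $\|u(t^\sharp)\|_H$ arbitrarily large, contradicting that $U(\varrho_l)$ stays $\le\varsigma\Lambda$ on $[0,t_{M(\varrho_l,T_0)}]$ and converges to $u$ there — provided we verify $t^\sharp\le t_{M(\varrho_l,T_0)}$ for $l$ large, which holds because $k_m(\varrho_l)\le c^{-1}\gamma^{-1}\varrho_l\cF^{1-\beta}\to0$ so the mesh reaches past $t^\sharp$ within $[0,T_0]$ for $l$ large. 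This contradiction forces $T_{\sup}\le T_\infty$.

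Combining, $T_\infty\le T_{\inf}\le T_{\sup}\le T_\infty$, so the limit exists and equals $T_\infty$. The main obstacle is the circularity in Lemma~\ref{lm:error}: the error bound depends on $\Xi(u,U,T_0)$, which involves the very discrete solution one wants to estimate. Resolving this requires an \emph{a priori}, $\varrho$-uniform $L^\infty$ bound on $U$ on the fixed interval $[0,T_0]$ \emph{before} invoking the error estimate — on the lower-bound side this comes from the geometric growth estimate~\eqref{eq:geo3} with the observation $(1+C_1\varrho)^{M(\varrho,T_0)}$ stays bounded as $\varrho\searrow0$ (since $M(\varrho,T_0)\varrho$ is bounded), while on the upper-bound side it comes from Lemma~\ref{lm:Tsup}. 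A secondary technical point is ensuring the interval $[0,t_{M(\varrho,T_0)})$ on which the error estimate lives actually covers a neighbourhood of the relevant time (either $T_0$ from below, or $t^\sharp$), which follows from $k_m(\varrho)\to0$ uniformly; I would handle this with the explicit bound $k_m(\varrho)\le c^{-1}\gamma^{-1}\varrho\cF^{1-\beta}$ already derived in the proof of Lemma~\ref{lm:error}.
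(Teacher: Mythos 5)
Your second half (showing $\varlimsup_{\varrho\searrow0}\widetilde T_\infty(\varrho)\le T_\infty$) is essentially the paper's argument: a subsequence realizing $T_{\sup}$, the uniform nodal bound of Lemma~\ref{lm:Tsup} on $[0,T_0]$, the error estimate, and the fact that $u$ becomes arbitrarily large before $T_\infty$. One technical slip there: you invoke Lemma~\ref{lm:error} with $T_0>T_\infty$, but that lemma requires $T_0<\min(T_\infty,\widetilde T_\infty(\varrho))$ and its constant involves $\|u\|_{L^\infty((0,T_0);H)}$, which is infinite once $T_0>T_\infty$. You must instead apply it on $[0,T_0']$ with $T_0'<T_\infty$ chosen so that $\|u(T_0')\|_H$ already exceeds the uniform bound on $U$; this is fixable, and is essentially what the paper does before converting the resulting largeness of $U$ near $T_0'$ into a bound on the remaining discrete time via~\eqref{eq:time}.

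The genuine gap is in your lower bound. You fix an arbitrary $T_0<T_\infty$ and want a $\varrho$-uniform $L^\infty$ bound on $U$ up to $T_0$ \emph{before} invoking Lemma~\ref{lm:error}; but by~\eqref{eq:geo2} the nodal norms tend to infinity as $t_m\nearrow\widetilde T_\infty(\varrho)$, so such a bound is essentially equivalent to $\widetilde T_\infty(\varrho)>T_0$ --- the very statement you are trying to prove. Neither of your proposed routes closes this circle. Lemma~\ref{lm:Tsup} controls $U$ on $[0,T_0]$ only for $T_0<T_{\sup}$ along the subsequence realizing $T_{\sup}$; for the subsequence realizing $T_{\inf}$ and $T_0\in(T_{\inf},T_\infty)$ it gives nothing. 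And the claim that $M(\varrho,T_0)\varrho$ stays bounded is false in general: by~\eqref{eq:km1} the steps satisfy $k_m(\varrho)\sim\varrho\|U_{m-1}^-\|_H^{1-\beta}$ with $\beta>1$, so $\sum_{m\le M}k_m\le T_0$ only yields $M\varrho\lesssim T_0\|U_{M-1}^-\|_H^{\beta-1}$, and feeding this into~\eqref{eq:geo3} produces the self-referential inequality $\|U_M^-\|_H\le\|u_0\|_H\exp\bigl(CT_0\|U_{M-1}^-\|_H^{\beta-1}\bigr)$, which does not bound $\|U_M^-\|_H$; indeed $M(\varrho,T_0)=\infty$ whenever $\widetilde T_\infty(\varrho)\le T_0$. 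The paper circumvents this by arguing by contradiction from $T_{\inf}<T_\infty$ with a different mechanism (Lemma~\ref{lm:m}): using the lower bound~\eqref{eq:timelow} on the remaining discrete time, it locates a node $t_{m_A(\varrho_l)}\le T_0<T_{\inf}$ at which $\|U^-_{m_A(\varrho_l)}\|_H$ is at least a prescribed large value $A$ and still controlled from above; since $T_0<T_{\inf}$, the error estimate is legitimately applicable there, giving $\|u(t_{m_A(\varrho_l)})\|_H\ge A/2$, and integrating~\eqref{eq:ODE} then forces $T_\infty\le T_0+\Delta_\infty<T_\infty$, a contradiction. Some argument of this type --- transferring largeness of the \emph{discrete} solution to $u$ at a time before $T_{\inf}$ --- is needed; the direct uniform-boundedness route you sketch does not go through.
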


\begin{proof}
We establish the proof by contradiction. 

Suppose first that $T_{\sup}:=\varlimsup_{\varrho\searrow0}\widetilde{T}_\infty(\varrho)>T_\infty$. Thence, $\infty>\Delta_\infty:=T_{\sup}-T_\infty>0$. We can find a sequence~$\{\varrho_l\}_{l}\subset\mathbb{R}_{>0}$ satisfying~\eqref{eq:rho}, with~$\varrho_l\xrightarrow{l\to\infty}0^+$, such that, for all~$l$, there holds
\begin{equation}\label{eq:sup}
\widetilde{T}_\infty(\varrho_l)\ge T_\infty+\frac12\Delta_\infty,
\end{equation}
as well as~\eqref{eq:murho}. Since the exact solution~$u$ of~\eqref{eq:1} blows up at~$T_\infty$, there is a time~$T_0$, $T_0< T_\infty$, such that
\[
\|u(T_0)\|_H\ge 2\left(\frac{4\mu}{\Delta_\infty}\right)^{\nicefrac{1}{(\beta-1)}}.
\]
Furthermore, choosing~$l'$ large enough, there exists a time node index $m(\varrho_{l'})$ with
\[
T_0\le t_{m(\varrho_{l'})}\le\frac12(T_0+T_\infty),
\]
and such that~$\sqrt{\varrho_l}$ is sufficiently small. Referring to Lemma~\ref{lm:Tsup}, we have
\[
\|U_{m(\varrho_{l'})}^-\|_H\le\left(\frac{T_{\sup}-\nicefrac12(T_0+T_\infty)}{2\mu}\right)^{\nicefrac{1}{(\beta-1)}},
\]
uniformly with respect to~$\varrho_{l'}$. Hence, by virtue of Remark~\ref{rm:boundedness} and Lemma~\ref{lm:error} (noting that~$C_{\bm r}<\infty$ in~\eqref{eq:uU}), it is possible to establish the estimate
\[
\|u(t_{m(\varrho_{l'})})-U_{m(\varrho_{l'})}^-\|_H\le \left(\frac{4\mu}{\Delta_\infty}\right)^{\nicefrac{1}{(\beta-1)}}.
\]
Since~$t\mapsto\|u(t)\|_H$ is non-decreasing this implies that
\begin{align*}
\|U_{m(\varrho_{l'})}^-\|_H
&\ge\|u(t_{m(\varrho_{l'})})\|_H-\|u(t_{m(\varrho_{l'})})-U_{m(\varrho_{l'})}^-\|_H\\
&\ge\|u(T_0)\|_H-\left(\frac{4\mu}{\Delta_\infty}\right)^{\nicefrac{1}{(\beta-1)}}\ge\left(\frac{4\mu}{\Delta_\infty}\right)^{\nicefrac{1}{(\beta-1)}}.
\end{align*}
Then, recalling~\eqref{eq:time}, leads to
\begin{align*}
\widetilde{T}_\infty(\varrho_{l'})
&\le t_{m(\varrho_{l'})}+\frac{\|U_{m(\varrho_{l'})}^-\|_H^{1-\beta}}{c\gamma^\beta}\frac{\varrho_{l'}(\gamma-\alpha\varrho_{l'})^{\beta-1}}{1-(1+C_0\varrho_{l'})^{1-\beta}}\\
&< T_\infty+2\mu\|U_{m(\varrho_{l'})}^-\|_H^{1-\beta}\le T_\infty+\frac12\Delta_\infty,
\end{align*}
which is a contradiction to~\eqref{eq:sup}.

Next, let us assume that $T_{\inf}:=\varliminf_{\varrho\searrow0}\widetilde{T}_\infty(\varrho)<T_\infty$,
and define $\Delta_\infty:=T_\infty-T_{\inf}>0$. Furthermore, let
\[
A:=\max\left(2\left(\delta(\beta-1)\Delta_\infty\right)^{\nicefrac{1}{(1-\beta)}},1+\|u_0\|_H\right)>\|u_0\|_H.
\]
Due to Lemma~\ref{lm:m} we can find a sequence~$\{\varrho_l\}_{l}\subset\mathbb{R}_{>0}$, with~$\varrho_l\xrightarrow{l\to\infty}0^+$, and a time~$T_0<T_{\inf}$ so that, for all $l$, there exists~$m_A(\varrho_l)$ with $t_{m_A(\varrho_{l})}\le T_0$,
and
\[
A\le\|U_{m_A(\varrho_{l})}^-\|_H\le 2\left(\frac{A}{\|u_0\|_H}\right)^{\nicefrac{C_1}{C_0}}\|u_0\|_H.
\]
In particular, $\|U_{m_A(\varrho_{l})}^-\|_H$ is bounded independently of~$\varrho_l$; evidently, since~$T_0<T_{\inf}<T_\infty$, it follows that~$\|u(t_{m_A(\varrho_l)})\|_H$ is bounded as well. Thus, as before, recalling Remark~\ref{rm:boundedness}, and using Lemma~\ref{lm:error}, we may find a sufficiently large index $l'$ such that
\[
\|u(t_{m_A(\varrho_{l'})})-U_{m_A(\varrho_{l'})}^-\|_H\le \frac12A.
\]
Then, 
\begin{align*}
\|u(t_{m_A(\varrho_{l'})})\|_H
&\ge \|U_{m_A(\varrho_{l'})}^-\|_H-\|u(t_{m_A(\varrho_{l'})})-U_{m_A(\varrho_{l'})}^-\|_H\\
&\ge \frac12A\ge\left(\delta(\beta-1)\Delta_\infty\right)^{\nicefrac{1}{(1-\beta)}}.
\end{align*}
Integrating~\eqref{eq:ODE} from $t_{m_A(\varrho_{l'})}$ to~$T_\infty$, we arrive at
\begin{align*}
T_\infty&\le t_{m_A(\varrho_{l'})}+ \frac{\|u(t_{m_A(\varrho_{l'})})\|_H^{1-\beta}}{\delta(\beta-1)}
\le T_0+\Delta_\infty
<T_{\inf}+\Delta_\infty=T_\infty,
\end{align*}
which constitutes a contradiction.

In summary, we have shown that
\[
\varlimsup_{\varrho\searrow0}\widetilde{T}_\infty(\varrho)\le T_\infty\le\varliminf_{\varrho\searrow0}\widetilde{T}_\infty(\varrho),
\]
which concludes the proof.
\end{proof}

\subsection{A Time Step Selection Algorithm}
The theory in the previous sections suggests the following algorithm for computing a numerical approximation of the exact blow-up time of~\eqref{eq:1} under the conditions~\eqref{eq:growth} and~\eqref{eq:Lip}.

\begin{algorithm}\label{alg:Tinf}
Suppose that the assumptions of Proposition~\ref{pr:blowup} are satisfied. Choose a parameter~$\varrho$ as in~\eqref{eq:rho}, and a tolerance~$\tau>0$. Then:
\begin{algorithmic}[1]
   	\State Set $m=0$; $\widetilde{T}_\infty=0$;
	\Loop
      \State $m \gets m+1$;
      \State Compute $k_m(\varrho)$ using~\eqref{eq:km1};
      \State $\widetilde{T}_\infty\gets\widetilde{T}_\infty+k_m(\varrho)$;
      \If {$k_m(\varrho)>\tau$}
      	\State{Compute the cG~\eqref{eq:cG} or the dG~\eqref{eq:dG} solution on~$I_m$}
		\State{(in~$M_m^{\cG}$ from~\eqref{eq:McG} or~$M_m^{\dG}$ from~\eqref{eq:MdG}, respectively);}
      	\Else\State{\textbf{return} $\widetilde{T}_\infty$};
	  \EndIf
   \EndLoop
\end{algorithmic}
The result, $\widetilde{T}_\infty$, is an approximation of the exact blow-up time.
\end{algorithm}

\begin{remark}\label{rm:until}
A practical (and platform-independent) implementation of the stopping criterion in the {\tt if}-statement in line~6 of the above Algorithm~\ref{alg:Tinf} is to run the time marching process until 
\[
\widetilde{T}_\infty+k_m(\varrho)\,\verb+==+\,\widetilde{T}_\infty
\] 
is true, where we make use of the equality operator ``\verb+==+''. Note that this also eliminates the need of specifying the tolerance parameter~$\tau$.
\end{remark}

In order to provide an illustrating example for Algorithm~\ref{alg:Tinf}, let us consider the initial value problem of finding a function~$u=u(t)$, $t\ge 0$, such that
\[
u'(t)=\frac{(|u(t)|+1)u(t)}{1+e^{-t}}=:\F(t,u(t)),\qquad u(0)=3.
\] 
It has an exact solution $u(t)=3(e^t+1)(5-3e^t)^{-1}$,
and, thus, features a blow-up at~$T_\infty=\ln(\nicefrac{5}{3})$. Here, $H=\mathbb{R}$, and~$\cF<3$ in~\eqref{eq:growth} in alignment with Proposition~\ref{pr:blowup}. Choosing~$\cF=2$, a few elementary calculations show that $\alpha=\nicefrac{3}{2}$, $\beta=2$, and~$\delta=\nicefrac{1}{2}$ in~\eqref{eq:growth}. Furthermore, $\gamma=\nicefrac{5}{2}$ in~\eqref{eq:Lip}, and~\eqref{eq:Lipball} holds with~$L_\F=5$. The unique positive root of~$\Psi$ in~\eqref{eq:Psi} is given by~$\overline\varrho\approx0.243163$. We see that the assumptions of Theorem~\ref{thm:Tinf} hold, although, in our computations, we select larger values of~$\varrho$ and of~$k_m(\varrho)$ than would be mandated by~\eqref{eq:rho} and~\eqref{eq:km1}, respectively. More precisely, we consider
\[
k_m(\varrho)=\varrho|U_{m-1}^-|^{-1},
\]
for~$\varrho\in\{2^{-\nicefrac{p}{2}},p=4,\ldots,10\}$. We run Algorithm~\ref{alg:Tinf} based on the stopping criterion mentioned in Remark~\ref{rm:until}. The polynomial degree~$r=r_m$ is kept fixed for all time steps. The results are displayed in Figure~\ref{fig:cGdG} (left) for both the cG and the dG time stepping methods for different values of~$\varrho$, and for various choices of~$r\in\{0,\ldots,3\}$. The numerical solution on each time step is obtained with the aid of a fixed point iteration as described in Remark~\ref{rm:FP}
%, with a tolerance~$\mathtt{tol}=10^{-14}$ 
(we note that solving the nonlinear problems by means of an adaptive Newton method is potentially more efficient from a computational view point, however, we remark that this approach seems more fragile close to the blow-up due to the large magnitude of the numerical solution). Even though Theorem~\ref{thm:Tinf} does not provide any theoretical evidence on convergence rates, the results suggest a convergence to blow-up time of order~$\mathcal{O}(\varrho^{2(r+1)})$ for the cG method (based on a local polynomial degree~$r+1$), and~$\mathcal{O}(\varrho^{2r+1})$ for the dG scheme (based on a local polynomial degree~$r$), for~$r=0,1,2$; for~$r\ge3$ the errors are too small to allow for a precise identification of the convergence behaviour. The number of time steps was found to be independent of the polynomial degrees (however, strongly dependent on~$\varrho$) and of whether the cG or dG method was employed; see Figure~\ref{fig:cGdG} (right).
\begin{figure}
\centering
\includegraphics[width=0.48\linewidth]{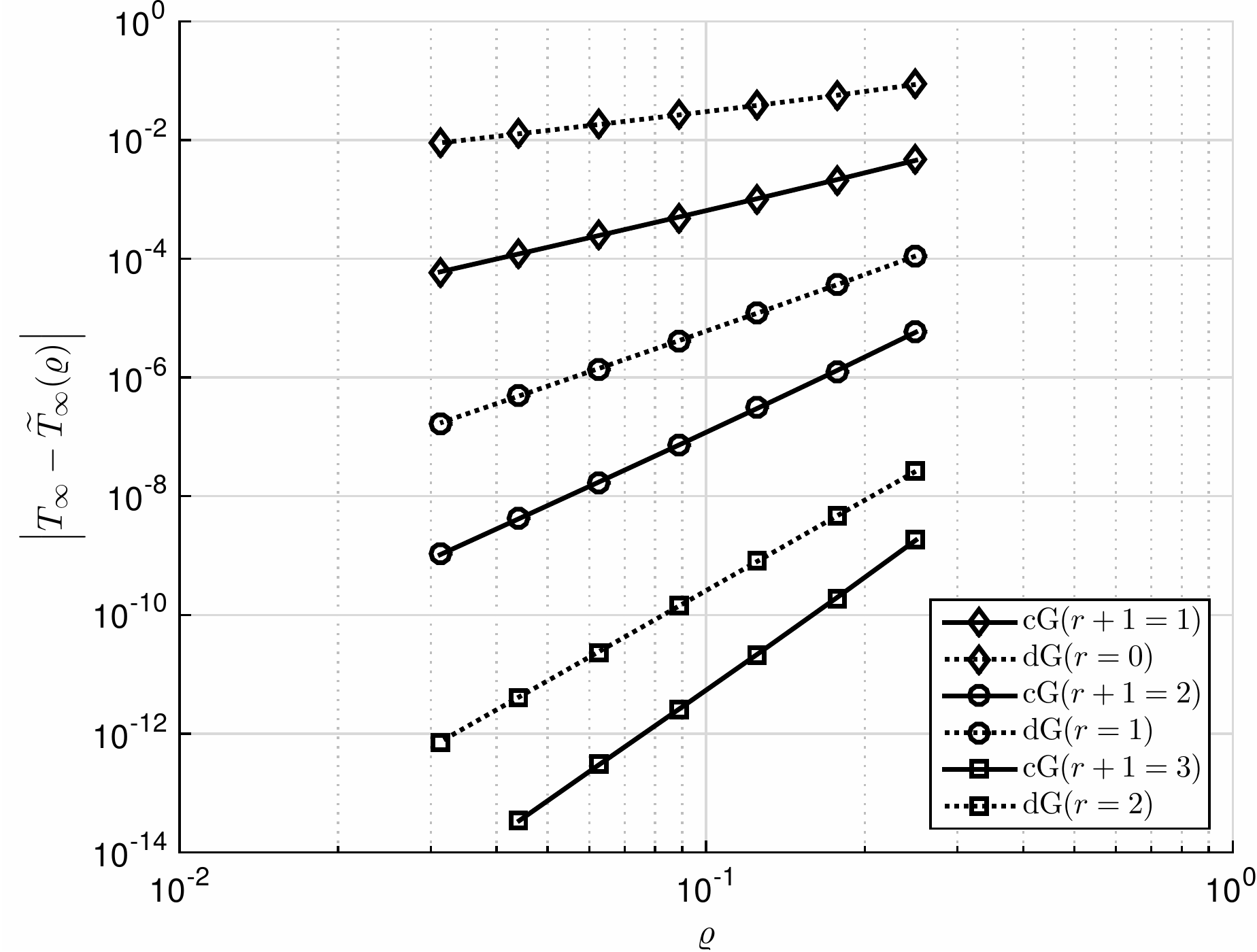}\hspace{3ex}
\includegraphics[width=0.47\linewidth]{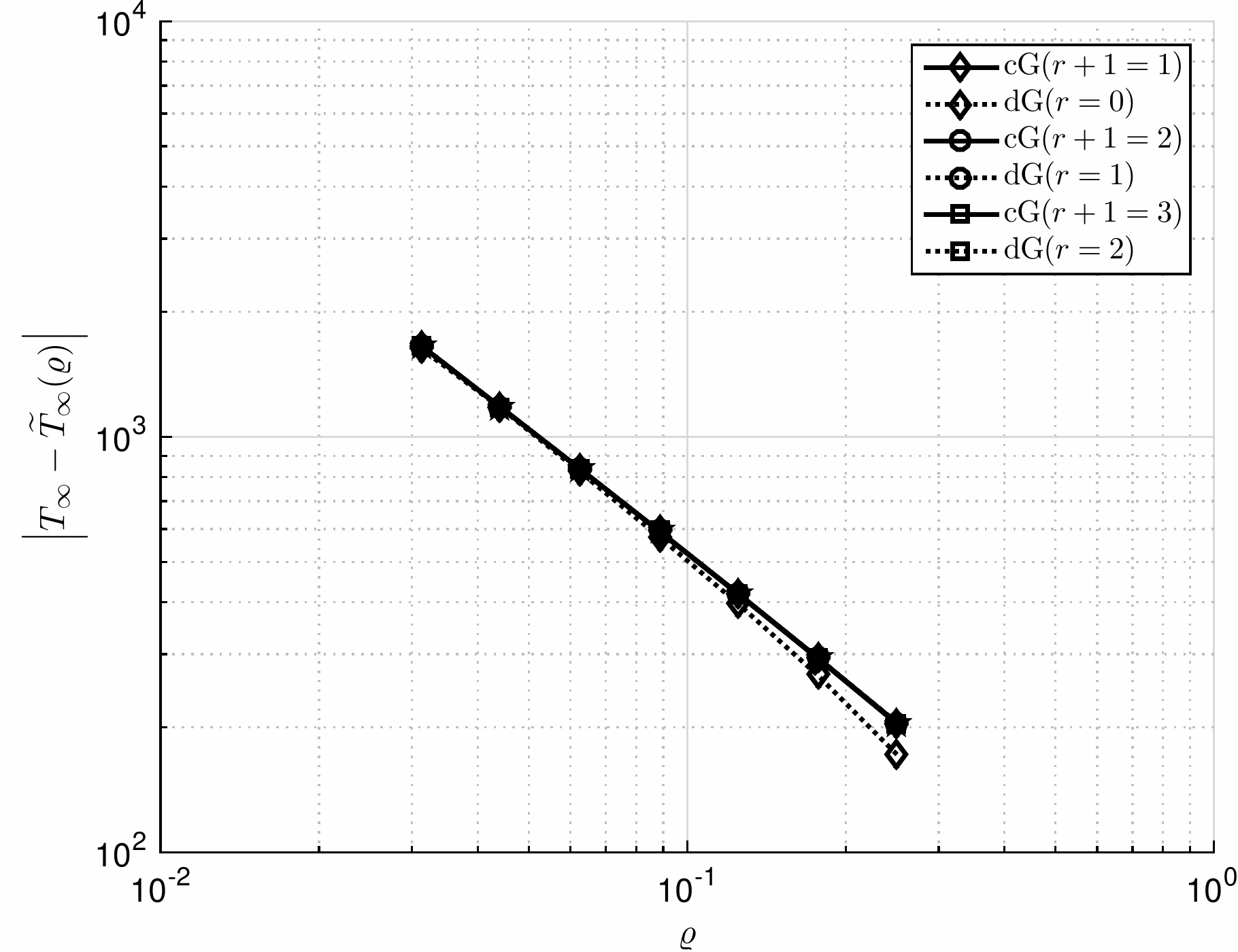}
\caption{Errors of approximation of blow-up time for the cG and dG time stepping schemes for different choices of polynomial degrees (left), and corresponding number of time steps (right).}
\label{fig:cGdG}
\end{figure}

%%%%%%%%%%%%%%%%%%%%%%%%%%%%%%%%%%%%%%%%%%%%%%%%%%%%%%%%%%%%%%%%%%%%%%
%%%%%%%%%%%%%%%%%%%%%%%%%%%%%%%%%%%%%%%%%%%%%%%%%%%%%%%%%%%%%%%%%%%%%%
\section{Conclusions}\label{conclusions}

In this paper we have investigated the $hp$-version continuous and discontinuous Galerkin time stepping methods for the numerical approximation of general initial value problems with continuous (and possibly unbounded) nonlinearities in real Hilbert spaces. Our main findings include Peano-type existence results for the discrete systems, and a blow-up time step selection algorithm, together with a convergence result, for problems with algebraically growing nonlinearities. We have shown that discrete solutions exist (and are unique within suitable ranges) provided that the local time steps are chosen sufficiently small (depending on the numerical solutions themselves, however, {\em independent} of the local polynomial degrees). The key ingredients in the existence and uniqueness proofs include the derivation of strong forms of the Galerkin discretizations, the transformation into suitable fixed point equations, and the application of fixed point theory. The application of the techniques derived in this article to nonlinear parabolic partial differential equations, and the development of \emph{a posteriori} error estimates for the blow-up time (in conjunction with the $hp$-framework) are subjects of ongoing research.

%%%%%%%%%%%%%%%%%%%%%%%%%%%%%%%%%%%%%%%%%%%%%%%%%%%%%%%%%%%%%%%%%%%%
%%%%%%%%%%%%%%%%%%%%%%%%%%%%%%%%%%%%%%%%%%%%%%%%%%%%%%%%%%%%%%%%%%%%%

\appendix
\section{An Auxiliary Result}

\begin{lemma}\label{lm:FLip}
Let~$\F:\,H\to H$ be a continuous function on a (real) Hilbert space~$H$, and~$M>0$ a constant such that the Lipschitz condition
\begin{equation}\label{eq:FLip}
\|\F(x)-\F(y)\|_H\le L_M\|x-y\|_H
\end{equation}
holds for any~$x,y\in H$ with~$\|x\|_H,\|y\|_H\le M$; here~$L_M>0$ is a constant. Then, the function
\[
\G:\,H\to H,\qquad x\mapsto \begin{cases}
\F(x)& \|x\|_H\le M,\\
\F\left(\frac{Mx}{\|x\|_H}\right)&\|x\|_H> M
\end{cases}
\]
is (globally) Lipschitz continuous on~$H$ with Lipschitz constant~$L_M$.
\end{lemma}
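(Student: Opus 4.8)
The plan is to prove the global Lipschitz estimate $\|\G(x)-\G(y)\|_H\le L_M\|x-y\|_H$ by distinguishing three cases according to which ``pieces'' of the definition the points $x$ and $y$ fall into. The key geometric fact I would rely on is that the radial projection $\rest_M\colon H\to\{z:\|z\|_H\le M\}$, defined by $\rest_M(x)=x$ if $\|x\|_H\le M$ and $\rest_M(x)=Mx/\|x\|_H$ if $\|x\|_H>M$, is itself $1$-Lipschitz on $H$ (it is the metric projection onto the closed convex ball of radius $M$, and metric projections onto closed convex sets in Hilbert space are nonexpansive). Granting this, $\G=\F\circ\rest_M$ on the set where $\F$ is Lipschitz-controlled, and the result becomes immediate: for any $x,y\in H$ we have $\|\rest_M(x)\|_H,\|\rest_M(y)\|_H\le M$, hence
\[
\|\G(x)-\G(y)\|_H=\|\F(\rest_M(x))-\F(\rest_M(y))\|_H\le L_M\|\rest_M(x)-\rest_M(y)\|_H\le L_M\|x-y\|_H.
\]

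If one prefers not to invoke the nonexpansiveness of the metric projection as a black box, I would instead verify the three cases directly. Case~1: $\|x\|_H,\|y\|_H\le M$. Then $\G(x)=\F(x)$, $\G(y)=\F(y)$ and the bound is exactly~\eqref{eq:FLip}. Case~2: $\|x\|_H,\|y\|_H>M$. Then $\G(x)=\F(Mx/\|x\|_H)$, $\G(y)=\F(My/\|y\|_H)$, both arguments have norm exactly $M$, so~\eqref{eq:FLip} applies and it remains to check $\|Mx/\|x\|_H - My/\|y\|_H\|_H\le\|x-y\|_H$; this is the elementary lemma that radial rescaling to a common sphere is a contraction, proved by expanding the square of the left-hand side, or again by the projection argument. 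Case~3 (the mixed case): $\|x\|_H\le M<\|y\|_H$. Then $\G(x)=\F(x)$ and $\G(y)=\F(My/\|y\|_H)$; both arguments have norm $\le M$, so~\eqref{eq:FLip} gives $\|\G(x)-\G(y)\|_H\le L_M\|x-My/\|y\|_H\|_H$, and I would finish by showing $\|x-My/\|y\|_H\|_H\le\|x-y\|_H$ whenever $\|x\|_H\le M<\|y\|_H$.

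The main obstacle is precisely the elementary inequality underlying Cases~2 and~3, namely $\|\rest_M(x)-\rest_M(y)\|_H\le\|x-y\|_H$. For Case~2 one writes $\|x/\|x\|_H - y/\|y\|_H\|_H^2 = 2 - 2(x,y)_H/(\|x\|_H\|y\|_H)$ and compares with $\|x-y\|_H^2=\|x\|_H^2+\|y\|_H^2-2(x,y)_H$, reducing to $2\|x\|_H\|y\|_H\le\|x\|_H^2+\|y\|_H^2$ after accounting for the sign of $(x,y)_H$; a short case analysis on whether $(x,y)_H$ is positive or negative closes it. For the mixed Case~3, setting $\hat y=My/\|y\|_H$, one shows $\|x-\hat y\|_H\le\|x-y\|_H$ by noting that $y$, $\hat y$ are positive multiples of the same vector with $\|\hat y\|_H=M<\|y\|_H$, so $\hat y$ lies on the segment from $0$ to $y$, hence is the point of the ball $\{\|z\|_H\le M\}$ closest to $y$; since $x$ also lies in that (convex) ball, the nonexpansiveness of the metric projection onto it gives $\|x-\hat y\|_H=\|\rest_M(x)-\rest_M(y)\|_H\le\|x-y\|_H$. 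Finally, I would note that continuity of $\G$ is automatic from the Lipschitz bound (or follows from continuity of $\F$ and of $\rest_M$ together with agreement of the two branch formulas on the sphere $\|x\|_H=M$), completing the proof.
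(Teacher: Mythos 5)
Your proposal is correct, and its primary route is genuinely different from the paper's. The paper proves the lemma by a direct three-case computation: for $\|x\|_H,\|y\|_H>M$ it expands $\norm{\nicefrac{x}{\|x\|_H}-\nicefrac{y}{\|y\|_H}}_H^2=\frac{2}{\|x\|_H\|y\|_H}(\|x\|_H\|y\|_H-(x,y)_H)$ and uses $2\|x\|_H\|y\|_H\le\|x\|_H^2+\|y\|_H^2$, and for the mixed case it verifies an explicit algebraic identity expressing $\norm{\nicefrac{x}{M}-\nicefrac{y}{\|y\|_H}}_H^2$ as $\frac{1}{M\|y\|_H}\|x-y\|_H^2$ minus a manifestly nonnegative term. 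You instead observe that $\G=\F\circ\rest_M$ where $\rest_M$ is the metric projection onto the closed ball of radius $M$, and invoke the nonexpansiveness of metric projections onto closed convex sets in Hilbert space; this collapses all cases into one line and is arguably the ``right'' conceptual explanation (it also generalizes immediately to truncation onto any closed convex set). The trade-off is that you import a classical functional-analytic fact, whereas the paper stays entirely self-contained at the level of inner-product algebra; your fallback direct verification essentially reproduces the paper's computation. One small remark: in your Case~2 the sign of $(x,y)_H$ is irrelevant --- Cauchy--Schwarz gives $\|x\|_H\|y\|_H-(x,y)_H\ge 0$, so multiplying by $M^2/(\|x\|_H\|y\|_H)\le 1$ only decreases the quantity, and no case split on the sign is needed; the claimed inequality is nonetheless correct as stated.
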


\begin{proof}
For $\|x\|_H,\|y\|_H\le M$ the claim follows immediately from the definition of~$\G$ and from~\eqref{eq:FLip}. If~$\|x\|_H,\|y\|_H> M$, we have
\begin{align*}
\|\G(x)-\G(y)\|_H\le L_MM\norm{\frac{x}{\|x\|_H}-\frac{y}{\|y\|_H}}_H,
\end{align*}
where we notice that
\begin{align*}
\norm{\frac{x}{\|x\|_H}-\frac{y}{\|y\|_H}}_H^2
&=\frac{2}{\|x\|_H\|y\|_H}\left(\|x\|_H\|y\|_H-(x,y)_H\right)\\
&\le \frac{1}{\|x\|_H\|y\|_H}\left(\|x\|_H^2+\|y\|^2_H-2(x,y)_H\right)
<\frac{1}{M^2}\|x-y\|_H^2.
\end{align*}
Thus, $\|\G(x)-\G(y)\|_H< L_M\|x-y\|_H$. Moreover, if~$\|x\|_H\le M<\|y\|_H$, then it holds that
\[
\|\G(x)-\G(y)\|_H\le L_MM\norm{\frac{x}{M}-\frac{y}{\|y\|_H}}_H,
\]
where
\begin{align*}
\norm{\frac{x}{M}-\frac{y}{\|y\|_H}}_H^2
&=\frac{1}{M\|y\|_H}\|x-y\|_H^2
-\frac{1}{M\|y\|_H}\left(\frac{\|y\|_H}{M}-1\right)\left(\|y\|_HM-\|x\|_H^2\right) \\
&<\frac{1}{M\|y\|_H}\|x-y\|_H^2<\frac{1}{M^2}\|x-y\|_H^2.
\end{align*}
Therefore, again~$\|\G(x)-\G(y)\|_H< L_M\|x-y\|_H$. The proof for~$\|x\|_H> M\ge\|y\|_H$ follows from symmetry.
\end{proof}

%%%%%%%%%%%%%%%%%%%%%%%%%%%%%%%%%%%%%%%%%%%%%%%%%%%%%%%%%%%%%%%%%%%%
%%%%%%%%%%%%%%%%%%%%%%%%%%%%%%%%%%%%%%%%%%%%%%%%%%%%%%%%%%%%%%%%%%%%%

\bibliographystyle{plain}
\bibliography{paper}
\end{document}